\pgfplotsset{compat=1.11}
\title{Algorithms for Euclidean Degree Bounded Spanning Tree Problems}
\author{Patrick J. Andersen%
  \thanks{Email: \texttt{pat.j.andersen@gmail.com}; Phone: \texttt{+61-413579238}; Corresponding author}
}
\author{Charl J. Ras}
\affil{School of Mathematics and Statistics, The University of Melbourne, Australia}
\newtheorem{lemma}{Lemma}
\providecommand{\keywords}[1]{\textit{Keywords: } #1}
\def\@maketitle{%
  \newpage
  \null
  \vskip 2em%
  \begin{center}%
  \let \footnote \thanks
    {\Large\bfseries \@title \par}%
    \vskip 1.5em%
    {\normalsize
      \lineskip .5em%
      \begin{tabular}[t]{c}%
        \@author
      \end{tabular}\par}%
    \vskip 1em%
    {\normalsize \@date}%
  \end{center}%
  \par
  \vskip 1.5em}
\begin{document}
\maketitle
\begin{abstract}
Given a set of points in the Euclidean plane, the Euclidean \textit{$\delta$-minimum spanning tree} ($\delta$-MST) problem is the problem of finding a spanning tree with maximum degree no more than $\delta$ for the set of points such the sum of the total length of its edges is minimum. Similarly, the Euclidean \textit{$\delta$-minimum bottleneck spanning tree} ($\delta$-MBST) problem, is the problem of finding a degree-bounded spanning tree for a set of points in the plane such that the length of the longest edge is minimum. When $\delta \leq 4$, these two problems may yield disjoint sets of optimal solutions for the same set of points. In this paper, we perform computational experiments to compare the accuracies of a variety of heuristic and approximation algorithms for both these problems. We develop heuristics for these problems and compare them with existing algorithms. We also describe a new type of edge swap algorithm for these problems that outperforms all the algorithms we tested. \\\\
\keywords{Minimum spanning trees, bottleneck objective, heuristic algorithms, discrete geometry, bounded degree, combinatorial optimisation}
\end{abstract}
\section{Introduction}
Spanning tree problems are some of the most well-studied problems of combinatorial optimisation. They arise in a multitude of practical settings including computer and telecommunication networks, transportation, plumbing, and electrical circuit design \cite{graham1985history}. In graph theoretic terms, a typical spanning tree problem involves finding a subgraph $G'$ of a given graph $G = (V,E)$, such that the vertex set of $G'$ is $V$, $G'$ is connected, and $G'$ has $|V| - 1$ edges. We will focus on the Euclidean versions of spanning tree problems, where the nodes of our network correspond to a set of points $P$ in the Euclidean plane such that our input graph is the complete graph for $P$. In this way, we can assign lengths to each of the edges in the graph, where the length of the edge is given by the Euclidean distance between its endpoints.\\\\
In this paper, we will consider two separate types of spanning tree problems; the classic minimum spanning tree, and its bottleneck version. In the \textit{minimum spanning tree} (MST) problem, we require that the sum of lengths of the edges between nodes of our spanning tree is minimum. In the \textit{bottleneck spanning tree problem}, we require that the length of the longest edge in the network is minimum. Both of these spanning tree problems can be solved efficiently using polynomial time algorithms (see Kruskal \cite{kruskal1956shortest}, Prim \cite{prim1957shortest} for the MST problem, and Camerini \cite{mbst} for the MBST problem), and in fact every algorithm for the MST problem can be used to solve the MBST problem since every MST is a MBST.\\\\
For particular applications of spanning trees, additional constraints are required for a feasible spanning tree. The constraint we explore in this paper is the degree bound constraint, which is a requirement that the degrees of all nodes in the spanning tree be at most some constant. We denote this constant by $\delta$ and we refer to the constrained versions of the MST and MBST problems as the $\delta$-MST problem and $\delta$-MBST problem respectively. For the Euclidean versions, it is known that no vertex in an MST has degree greater than 6 and that there always exists an MST in which no vertex has degree greater than 5 \cite{degree5EMST}. Hence when $\delta \geq 5$, a solution to the MST problem for a given point set will also be a solution to both the $\delta$-MST and $\delta$-MBST problems. As such, we will only consider degree constraints in which $2 \leq \delta \leq 4$. It was shown by Papadimitriou and Vazirani \cite{papavazi} that the $\delta$-MST problem is NP-hard for $\delta = 2$ and $3$, and it was later shown by Francke and Hoffman \cite{francke2009Euclidean} to also be NP-hard for $\delta = 4$. For the $\delta$-MBST problem, it has been shown that the problem is NP-hard for $\delta = 2$ and $3$ \cite{andersen2016minimum}, but determining complexity of the case where $\delta = 4$ remains an open problem. Approximation algorithms for the Euclidean $\delta$-MST problem have been developed by Khuller et. al. \cite{khuller1996low} and Chan \cite{chan2003euclidean} and their approximation ratios for the $\delta$-MBST problem were explored in \cite{andersen2016minimum}. Heuristics for the general version of the $\delta$-MST problem have been explored by numerous authors including Narula and Ho \cite{narula1980degree}, Knowles and Corne \cite{knowles2000new}, and Bui and Zrncic \cite{bui2006ant}.\\\\
For the special case of $\delta = 2$, the $2$-MST problem is equivalent to the path version of the famous travelling salesman problem (TSP). The TSP has been well studied throughout the history of combinatorial optimization, and there are many algorithms for the problem that have been proposed and tested throughout the literature. There are numerous approximation and heuristic algorithms that have been proposed for the metric and Euclidean versions of the problem, with the more famous ones including Christofides algorithm \cite{christofides1976worst}, the Lin-Kernighan algorithm \cite{lin1973effective}, and Arora's PTAS \cite{arora1996polynomial}, among others \cite{rosenkrantz1977analysis},\cite{laporte1992traveling}. Whilst the ever-expanding literature on the TSP is incredibly vast, we will restrict our attention to algorithms that can be effectively applied to the path version of the TSP (TSP-path problem). The TSP-path problem has received significantly less attention than the usual cycle version, although there still various algorithms in the literature \cite{hoogeveen1991analysis}, \cite{an2015improving}, \cite{traub2018approaching}. On the other hand, the $2$-MBST problem is equivalent to the TSP-path problem with the bottleneck objective. An efficient 2-factor approximation for the metric TSP with bottleneck objective has been given by Parker and Rardin \cite{parker1984guaranteed}, however there has been little work done on specific algorithms for the Euclidean bottleneck TSP-path problem.\\\\
In this paper we survey existing algorithms and introduce a number of new algorithms for both the $\delta$-MST and $\delta$-MBST problems. We then perform computational testing of heuristics and approximation algorithms for the Euclidean $\delta$-MST and $\delta$-MBST problems in order to test their performances with respect to both the total weight objective criterion of the $\delta$-MST and the bottleneck length objective of the $\delta$-MBST problem. Our goal for this investigation was to answer the following three research questions. 
\begin{enumerate}
\item Should different algorithmic approaches be employed for the different degree bounds when it comes to solving the  Euclidean $\delta$-MST and Euclidean $\delta$-MBST problems, or is it preferable to use a single algorithm or its generalisation for all $\delta$?
\item Should different algorithmic approaches be employed for the Euclidean $\delta$-MST problem as opposed to the Euclidean $\delta$-MBST, or is it preferable to use a single algorithm for both objective criteria?
\item For any particular $\delta$, are there any algorithms that stand out as being suitable candidates for efficiently solving either the Euclidean $\delta$-MST or $\delta$-MBST problems?
\end{enumerate}

In regards to the to third question, our focus is less on the finding the ``best" algorithm for a specific problem variant, and more on identifying an algorithmic framework that seems to fare better at exploiting the structure of the problem and its instances. Our hope is that after seeing which frameworks are more effective, in the future we will be able to develop more sophisticated algorithms based upon the more successful frameworks. As such, we will only implement a select number of interesting existing algorithms within the vast array of approximation and heuristic algorithms available in the literature. Furthermore, we will favour approaches that are designed specifically for the problems we are analysing, such as approximation algorithms, over more general meta-heuristics.\\\\
\textbf{Our results:}
In order to have a sufficient set of algorithms for comparison, we describe a number of existing algorithms for the $\delta$-MST and $\delta$-MBST problems, and we develop a set of new edge swap algorithms for solving the $\delta$-MST and $\delta$-MBST problems based upon the ideas of local search. These new algorithms serve as benchmark comparisons for the other algorithms and as potential proof-of-concept algorithms. Since degree four and five nodes are relatively uncommon in MST's on uniformly distributed random point sets, we also develop an algorithm for producing test instances in which the MST's are guaranteed to have nodes of high degree.\\
After performing our computational experiments and comparing the results of the various algorithms, our answers to the research questions are as follows.
\begin{enumerate}
\item There are algorithms that outperform all others for specific $\delta$ but not for other degree bounds.
\item For certain degree bounds there are algorithms that outperform all others for the $\delta$-MST problem, but not for the $\delta$-MBST problem. This occurred particularly for the case when $\delta = 2$.
\item For the $2$-MBST, the Cube2 algorithm (Section~\ref{sec:cube2}) clearly outperformed all others tested, the $\delta$-Prim's algorithm (Section~\ref{sec:dPrim}) outperformed the other tested algorithms for the $3$-MBST problem, the Chan4 algorithm (Section~\ref{sec:Chan4}) outperformed the other tested algorithms for the $4$-MBST problem, and our newly  created DNLS algorithm (Section~\ref{sec:DNLS}) outperformed all the others tested for the $3$-MST and $4$-MST problems. Although the DNLS algorithm's time complexity may disqualify it as a suitable candidate for practical use, it does demonstrate a successful proof of concept for this kind of approach and it may be modified in future to make it more efficient.
\end{enumerate}


\section{Test Instances}
Throughout this paper, we denote the degree bound of our problems by $\delta$, where $\delta$ is always assumed to be 2, 3 or 4, and we let $n$ denote the number of input points in an instance. In order to test the performance of our algorithms with some degree of accuracy, we must have a sufficient number of instances upon which our algorithm can be tested. These test instances are lists of points in the Euclidean plane, where the number of points in a list is a pre-determined parameter $n$. A simple way of generating such instances would be to choose a random set of $n$ points in the plane using a bounded uniform distribution. Such an approach is suitable for the problems with $\delta = 2$, however complications arise when considering degree bounds of three or four. This is due to the very low frequency of degree four and five vertices that naturally occur in MST's for random point sets in the plane. Therefore when $\delta = 4$ or $5$, unless $n$ is very large, it is likely that an MST for a random point set in the plane will be a $\delta$-MST. Steele, Shepp and Eddy \cite{steele1987number} show that if the distribution of the points has no singular part, then as $n \rightarrow \infty$, the expected proportion of nodes of a given degree $D$ in the MST for the point set approaches some constant $\beta (D)$. In addition, the authors show experimentally that for uniformly distributed points in the plane with large $n$, on average approximately $0.7 \%$ of the vertices in MST for the point set will be of degree four, whereas vertices of degree five are even more rare and do not usually appear in instances. Thus, since we want to avoid most instances in which an MST is a $\delta$-MST, we opt for crafted instances in which we can be guaranteed certain numbers of degree four or five vertices.\\
Our approach for forcing vertices of degree $D =4$ or $5$ in the MST is to generate a set of $D+1$ point in the planes, which we refer to as \textit{star points}, such that the MST for a set of star points is a star graph $S_D$, i.e., a tree with a single vertex of degree $D$ and $D$ leaves. As long as all other generated points lie outside a certain region with respect to a given set of star points, the MST for the full point set will contain the star $S_D$ as an induced subgraph of the given star points. Hence the MST for the full point set will contain a vertex of degree $D$. In the following sections, we describe our chosen process for generating star points in a way that allows our crafted instances to be as random as possible whilst still ensuring vertices of specified degree.

\subsection{Star Points}
In this section, we give the necessary conditions for a star on a point set $S$ in the plane to be an MST for $S$, where either $|S| = 5$ to give the star $S_4$, or $|S| = 6$ to give the star $S_5$. For the $S_4$ case, we let $G = (\{v,v_1,v_2,v_3,v_4\},$ $\{(v,v_1),(v,v_2),(v,v_3),(v,v_4)\})$ be a star on a set of five points in the plane, where $v$ is the vertex of degree four, and $v_1, v_2, v_3, v_4$ are the leaf vertices in clockwise order. Let  $\theta_1,\theta_2,\theta_3,\theta_4$ be the angles between the edges incident to $v$ as shown in Figure~\ref{fig:star4fig}. The notation for the $S_5$ case is similar and is given by Figure~\ref{fig:star5fig}. We refer to the point $v$ as the \textit{centre point} and the other points as \textit{radial points}.

\begin{figure}[ht!]  
  \begin{subfigure}[b]{0.5\linewidth}
    \centering
    \includegraphics[scale =1]{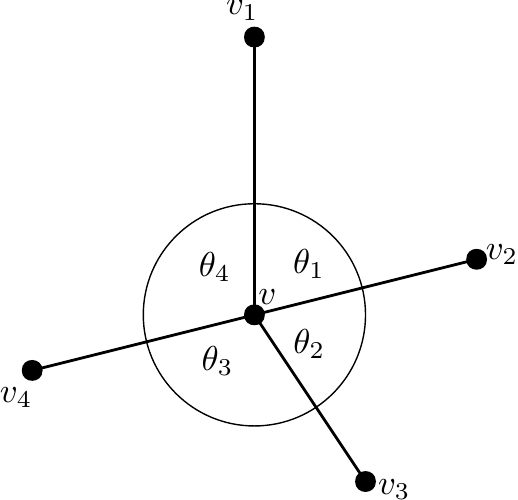} 
    \caption{An example of a star $S_4$ in the plane.}
    \label{fig:star4fig}  
  \end{subfigure}
  \hspace{4ex}
  \begin{subfigure}[b]{0.5\linewidth}
    \centering
    \includegraphics[trim = 0 -0 0 0,clip,scale =1]{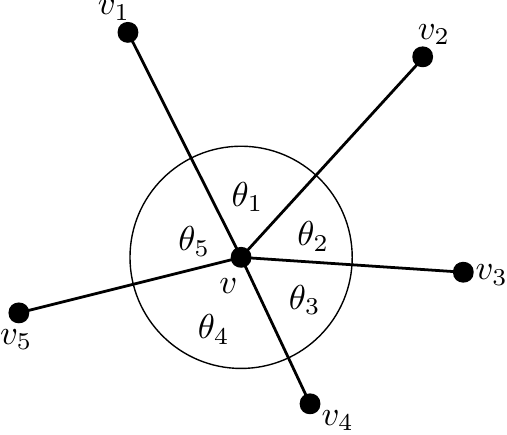} 
    \caption{An example of a star $S_5$ in the plane.} 
    \label{fig:star5fig}
  \end{subfigure}
    \label{fig:starsfig} 
    \caption{}
\end{figure}

Let $d(x,y)$ denote the Euclidean distance between the point $x$ and the point $y$. For $G$ to be an MST, we must satisfy the following necessary and sufficient conditions. \[\max\{d(v,v_i),d(v,v_j)\} \leq d(v_i,v_j) \quad \forall i,j \in \{1,2, \dots, D\},\] where $i \neq j$ and $D = 4$ or $5$. This is due to to the fact that the addition of any edge to the star creates a 3-cycle, and so we require that any new edge (which can only be between two radial points) be no shorter than both the other edges in the cycle.
Another way to say this is that for the angle $\theta_i$, the side opposite $\theta_i$ must be at least as long as both sides adjacent to $\theta_i$. For these conditions to hold, we require that each of the angles $\theta_i$ is at least $60^\circ$. If any $\theta_i$ is at least $90^\circ$, then the side opposite $\theta_i$ will be strictly longer than both of the sides adjacent to $\theta_i$. In order to allow angles strictly less than $90^\circ$ in our stars whilst still satisfying the conditions, we will make use of the following lemma.

\begin{lemma} \label{lem:starlem}
Let $x,y,z$ be side lengths of a triangle with $\theta$ as the angle between the sides of length $x$ and $y$, where $60^\circ \leq \theta < 90^\circ$. If $y = kx$ for some $k \geq 1$, then $z \geq y$ if and only if $k \leq  \frac{1}{2 \cos( \theta)}$.
\end{lemma}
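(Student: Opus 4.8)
The plan is to reduce the entire claim to the law of cosines followed by a short chain of equivalences. Since $z$ is the side opposite the angle $\theta$ (the side adjacent to the two sides of length $x$ and $y$), the law of cosines gives
\[
z^2 = x^2 + y^2 - 2xy\cos\theta.
\]
This is the one structural fact about the triangle that we need; everything after it is algebra together with careful bookkeeping of signs.

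First I would observe that because $x$, $y$, $z$ are all positive, the inequality $z \geq y$ is equivalent to $z^2 \geq y^2$, so it suffices to work with squares. Substituting the law of cosines, $z^2 \geq y^2$ becomes $x^2 + y^2 - 2xy\cos\theta \geq y^2$, which simplifies to $x^2 - 2xy\cos\theta \geq 0$, i.e. $x(x - 2y\cos\theta) \geq 0$. Since $x > 0$, I can divide through and conclude that this holds if and only if $x \geq 2y\cos\theta$. Finally I would substitute $y = kx$ and cancel the common positive factor $x$, obtaining $1 \geq 2k\cos\theta$, which rearranges to $k \leq \frac{1}{2\cos\theta}$. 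Each of these steps is an equivalence rather than an implication, so the ``if and only if'' is obtained for free by reading the chain in both directions.

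The only place requiring care — and the nearest thing to an obstacle — is justifying the sign manipulations: squaring is reversible only because all three lengths are positive, and the final division by $2\cos\theta$ preserves the inequality direction only because $\cos\theta > 0$. Both are guaranteed by the hypothesis $\theta < 90^\circ$, which is precisely why that bound appears. It is worth noting that the constraints $\theta \geq 60^\circ$ and $k \geq 1$ are not actually used in the equivalence itself; they are contextual, fixing the regime in which the lemma is applied (for instance, at $\theta = 60^\circ$ the bound reads $k \leq 1$, which together with $k \geq 1$ forces $k = 1$). I would therefore state the proof as a clean biconditional derived from the law of cosines and flag explicitly where positivity of $x$ and of $\cos\theta$ are invoked, since these are the hinges on which the ``if and only if'' turns.
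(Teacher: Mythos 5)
Your proposal is correct and follows essentially the same route as the paper's proof: apply the cosine rule to get $z^2 = x^2 + y^2 - 2xy\cos\theta$ and then run a chain of equivalences down to $k \leq \frac{1}{2\cos\theta}$. The only difference is cosmetic — you substitute $y = kx$ at the end rather than at the start and you make explicit the positivity justifications (of $x$ and of $\cos\theta$) that the paper leaves implicit, which is a slight improvement in rigour but not a different argument.
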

\begin{proof}
Using the cosine rule, $z^2 = x^2 +k^2x^2$ - $2kx^2 \cos(\theta)$.
Hence
\begin{align}
z \geq y &\Leftrightarrow k^2x^2 \leq x^2 +k^2  x^2 - 2kx^2  \cos ( \theta )\\
&\Leftrightarrow 2k x^2 \cos( \theta) \leq x^2\\
&\Leftrightarrow k \leq  \frac{1}{2 \cos( \theta)}
\end{align}
\end{proof}

Lemma~\ref{lem:starlem} will be useful in the following section as it will allow us to choose appropriate side lengths for the star, assuming the angles are fixed. 
\subsection{Algorithm to Generate Star Points}
To make our test cases as general as possible, we wish to have an algorithm that can generate any possible set of star points whose MST is $S_4$ or $S_5$. In this section we describe such an algorithm, where the inputs to our algorithm are a length $L$ and an integer $D = 4$ or $5$, and the output is a random set of $D + 1$ points such that the MST of the points is a star $S_D$ whose longest edge is of length $L$.\\
The algorithm consists of four stages.
\begin{enumerate}
\item Angle stage.
\item Augmentation stage.
\item Scaling stage.
\item Rotation stage.
\end{enumerate}

In the angle stage, we randomly allocate the angles $\theta_1,\dots ,\theta_D$ such that $\sum_{i = 1}^D \theta_i = 360^{\circ}$, and each $\theta_i \geq 60^{\circ}$. After this stage, we have a set of $D$ radial points distributed around a centre point according to the allocated angles, where
each radial point is of unit distance from the centre point. These points satisfy
the conditions of being star points and hence the MST of these points is the star $S_D$. The augmentation stage involves increasing or decreasing the distance of each radial point from the centre point one at a time such that after each change of distance, or augmentation, the resultant points are still star points. The scaling stage involves scaling the distances of the radial points from the centre point by a constant factor (this will ensure that the longest edge in the star is of length $L$) and the rotation stage rotates all radial points around the centre point by a random angle.\\
To ensure that after each augmentation we are still left with star points, we use Lemma~\ref{lem:starlem} to give an allowable range of distances for each radial point from the centre point. For $i =1, \dots, D$, the \textit{left range} of the radial point $v_i$ is the interval 
\[ [2\cos (\theta_{i-1})\cdot d(v,v_i), \frac{d(v,v_i)}{2\cos (\theta_{i-1})}]\] if $\theta_{i-1} < 90^\circ$ and the set $\mathbb{R}_+$ otherwise, where we let $v_0 = v_D$ and $\theta_0 = \theta_D$. Similarly, the \textit{right range} of $v_i$ is the interval
\[ [2\cos (\theta_{i})\cdot d(v,v_{i+1}), \frac{d(v,v_{i+1})}{2\cos (\theta_{i})}]\] if $\theta_{i} < 90^\circ$ and the set $\mathbb{R}_+$ otherwise, where we let $v_{D+1} = v_1$. The \textit{allowable range} of $v_i$ is the intersection of its left range and right range. By applying Lemma~\ref{lem:starlem}, it can be seen that the allowable range of $v_i$ is the largest possible set of distances that $v_i$ can be from the centre point whilst ensuring the conditions of being star points are satisfied, maintaining all angles, and keeping the distances of its neighbouring radial points from the centre point fixed. In a single augmentation, a radial point is moved within its allowable range of distances from the centre point whilst keeping all angles fixed. A \textit{round} of augmentation
is complete after each radial point in clockwise order has been augmented. Our algorithm will repeat this process so that it will use $D - 2$ rounds of augmentation.\\
It can be seen that no stage of the algorithm will have the points violating the star point conditions, hence the output of the algorithm will be a valid set of star points. What remains to be shown is that the algorithm can generate every possible set of star points such that the longest edge in the MST of the points is of length $L$.\\ Suppose we had an arbitrary star $G = S_D$ embedded in the plane, where $D$ is either $4$ or $5$. We show that $G$ can be obtained as an output of our algorithm. Let $\theta_1, \dots , \theta_d$ be the angles of the star $G$. Clearly, star points with these angles can be obtained via the angle stage and the orientation of the angles can be set to match those of $G$ via the rotation stage. Hence, if we let $v_1, \dots , v_D$ be the radial points of $G$, then we can obtain via the algorithm a set of star points $S$, such that the radial points of $S$ correspond to the radial points of $G$ and corresponding angles are equal. To show that the distances of the radial points of $G$ from the centre point can be matched by applying the augmentation and scaling stages to $S$, we will show the reverse; that by applying the augmentation and scaling stages to the points of $G$, we can obtain a set of
star points whose radial distances are all unitary.
\begin{lemma} \label{lem:starlem2}
Given an arbitrary set of $D+1$ star points $S^*$, where $D$ is either $4$ or $5$, a set of corresponding star points $S$ can be obtained through $D-2$ rounds of augmentation and an application of the scaling stage such that each radial point of $S$ is of unit distance from the centre point.
\end{lemma}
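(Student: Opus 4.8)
The plan is to recast the whole problem in logarithmic radial coordinates, where both the star condition and a single augmentation become affine. Writing $s_i = \log d(v,v_i)$ and $L_i = \log\frac{1}{2\cos\theta_i} \ge 0$ (with $L_i = +\infty$ when $\theta_i \ge 90^\circ$), Lemma~\ref{lem:starlem} says that the star condition for the adjacent pair $(v_i,v_{i+1})$ is exactly $|s_i - s_{i+1}| \le L_i$; a non-adjacent pair subtends an apex angle $\ge 120^\circ > 90^\circ$, so by the observation preceding the lemma the side opposite it is strictly longest and its condition holds automatically. Hence a configuration is a set of star points iff $|s_i - s_{i+1}| \le L_i$ holds cyclically. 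In these coordinates an augmentation of $v_i$ replaces $s_i$ by any value in $[\max(s_{i-1}-L_{i-1},\,s_{i+1}-L_i),\ \min(s_{i-1}+L_{i-1},\,s_{i+1}+L_i)]$, which is nonempty since it contains the current $s_i$, and the scaling stage adds a common constant to every $s_i$. Because ``all radial distances unit'' means ``all $s_i = 0$'', and because an all-equal vector satisfies every constraint (the differences vanish and $L_i \ge 0$), it suffices to show that $D-2$ clockwise rounds can drive $(s_1,\dots,s_D)$ to a common value, after which one scaling normalises it to $0$.

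For the equalisation I would sweep clockwise and, at each $v_i$, move $s_i$ as close as its interval allows to a fixed target $t$. The qualitative engine is that the already-updated left neighbour carries $t$ forward, so the set of coordinates pinned to $t$ grows into a clockwise arc each round, while the remaining coordinates are held up only by their stale far-side neighbour. A first attempt with $t = \min_i s_i$ keeps the minimiser fixed and lowers everything toward it, but the small examples I worked through show this can exceed the budget once a link is unconstrained: a monotone staircase with one free link propagates only one step per round. Choosing instead a central, median-type target $t$ repairs this, since the equalised arc then grows toward a meeting point from the anchored end while the opposite end is pulled in at the same time; I would formalise it by an induction showing each round increases by at least one the number of coordinates permanently settled at $t$.

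The main obstacle is precisely the bookkeeping that turns ``the settled arc grows'' into the exact bound of $D-2$ rounds, uniformly over all valid inputs and all patterns of unconstrained ($\theta_i \ge 90^\circ$) links, which sever the cycle into independent paths and alter the propagation geometry. Since the lemma is needed only for $D \in \{4,5\}$, I would discharge this by a short case analysis rather than a general argument: show that two clockwise rounds toward a suitable $t$ always equalise when $D=4$, and three suffice when $D=5$, using the logarithmic reformulation to reduce each case to elementary interval arithmetic. Finally I would record the reversibility remark that the paper relies on afterwards: each augmentation moves $s_i$ within an interval fixed by its neighbours, so the reverse move lies in the same interval and is again legal; hence the reducing sequence constructed above, read backwards with the scaling inverted, is a legal sequence building $S^*$ from the unit star.
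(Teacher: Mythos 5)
Your logarithmic reformulation is sound: with $s_i = \log d(v,v_i)$ and $L_i = \log\frac{1}{2\cos\theta_i}$ (infinite for $\theta_i \geq 90^\circ$), the star-point conditions do reduce, via Lemma~\ref{lem:starlem} and the observation about non-adjacent pairs subtending at least $120^\circ$, to the cyclic constraints $|s_i - s_{i+1}| \leq L_i$; an augmentation is exactly a move of one $s_i$ inside the interval you wrote down, and scaling is a common translation. The closing reversibility remark is also a genuinely useful point that the surrounding text of the paper leaves implicit. However, there is a real gap: the entire content of Lemma~\ref{lem:starlem2} is the claim that some explicit schedule of moves equalises every valid configuration within $D-2$ rounds, and that is precisely the step you defer. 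You never define the ``median-type'' target $t$, and you never carry out the case analysis for $D=4$ and $D=5$; your own staircase example demonstrates that the round count is sensitive to the choice of target, so the correctness of the unspecified variant cannot be assumed. As written, the proposal establishes only a convenient coordinate system, not the lemma.

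The comparison with the paper's proof is instructive, because the paper escapes the staircase obstruction you identified by a different mechanism than changing the target: its first round is not target-driven at all. In the paper, round one raises every strict local minimum to the nearer of its two neighbouring values and lowers every strict local maximum to the farther (its first and second ``states''), after which the maximal and minimal values each occupy at least two consecutive radial points; the remaining one round (for $D=4$) or two rounds (for $D=5$) then propagate the \emph{post-flattening} minimum $L_{\min}^{S^1}$ around the cycle using the ``third state'' move followed by second-state moves. On your staircase $(0,a,2a,3a)$ with $L=(a,a,a,\infty)$ this produces $(a,a,2a,2a)$ after round one and equalises at $a$, not at the original minimum $0$ --- the key idea your min-target analysis missed is that the common final value must be allowed to differ from any value present initially. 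Be warned, too, that the interval bookkeeping you hope is ``short'' and ``elementary'' is delicate at the boundary of the constraint set: when some $L_i = 0$ (angles of exactly $60^\circ$) adjacent points are pinned together, and a configuration such as $s=(0,0,c,c)$ with $L=(0,\infty,0,\infty)$ admits no nontrivial single-point move whatsoever, so no schedule can equalise it. This degenerate regime is a difficulty for any proof of the lemma as stated (the paper's own claim that its third-state augmentation stays in the allowable range also fails without strictness assumptions there), and it is exactly the kind of case your deferred analysis would have to confront.
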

\begin{proof}
Clearly, if we can apply the augmentation stage to $S^*$ to obtain a set of star points $S'$ such that the radial distances of $S'$ are uniform, then we can use the scaling stage to obtain $S$ from $S'$. Hence we will show that we can obtain $S'$ from $S^*$ via augmentations. Also note that not changing the distance of a radial point is considered to be a valid augmentation.\\
Given a set of star points $\hat{S}$, let $L_{\max}^{\hat{S}}$ denote the length of the longest distance between a radial point and the centre point in $\hat{S}$ and let $L_{\min}^{\hat{S}}$ denote the length of the shortest distance between a radial point and the centre point in $\hat{S}$. Assume that $v_0 = v_D$ and $v_{D+1} = v_1$. For a radial point $v_i$, its clockwise neighbour $v_{i+1}$, and its anti-clockwise neighbour $v_{i - 1}$, we describe three different states in which we will perform an augmentation of $v_i$. See Figure~\ref{fig:starsits} for an illustration of these states.

\begin{figure}[ht!]
\centering  
  \begin{subfigure}[b]{0.3\linewidth}
    \centering
    \includegraphics[scale =1]{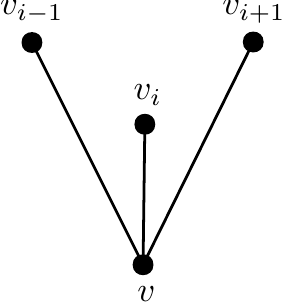} 
    \caption{The first state for augmentation.} 
  \end{subfigure}
  \hspace{1ex}
  \begin{subfigure}[b]{0.3\linewidth}
    \centering
    \includegraphics[trim = 0 -0 0 0,clip,scale =1]{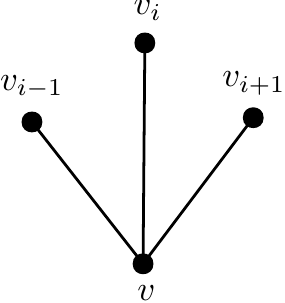} 
    \caption{The second state for augmentation.} 
  \end{subfigure}
  \hspace{1ex}
  \begin{subfigure}[b]{0.3\linewidth}
    \centering
    \includegraphics[trim = 0 -0 0 0,clip,scale =1]{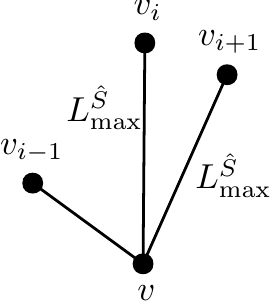} 
    \caption{The third state for augmentation.} 
  \end{subfigure} 
    \caption{The three states in which augmentation is performed in Lemma~\ref{lem:starlem2}.}
    \label{fig:starsits}
\end{figure}

\begin{enumerate}
\item If $d(v, v_i) < \min\{ d(v, v_{i - 1}), d(v, v_{i+1})\} $, then we augment $v_i$ so that $d(v, v_i) = \min\{ d(v, v_{i - 1}), d(v, v_{i+1})\} $.
\item If $d(v, v_i) > \max\{ d(v, v_{i-1}), d(v, v_{i+1})\} $, then we augment $v_i$ so that  $d(v, v_i) = \max\{ d(v, v_{i -1}), d(v, v_{i+1})\}$.
\item If  $d(v, v_i) > d(v,v_{i-1})$ and $d(v, v_i) = d(v, v_{i+1}) = L_{\max}^{\hat{S}}$, where $\hat{S}$ is the current set of star points, then we augment $v_i$ so that $d(v, v_i) = d(v,v_{i-1})$.
\end{enumerate}

Note that these augmentations are valid as $v_i$ remains within its allowable range after being augmented. Our procedure for transforming $S^*$ into $S'$ is as follows. First, augment one at a time each $v_i \in \{v_1,\dots , v_D\} $ that is in the first or second state, according to the augmentation rules described above, and let $S_1$ be the resultant set of star points after a single round of augmentation. Note that after this round of augmentation, no $v_i$ should be in the first or second state as augmenting some $v_i$ according to the augmentation rules should not not cause some $v_j$ to be in the first or second state if it was not in that state previously. Also note that after this round of augmentation, there will be at least two consecutive radial points whose distances from the centre point are both $L_{\max}^{S^1}$, as well as at least two consecutive radial points whose distances from the centre point are both $L_{\min}^{S^1}$, since otherwise there would be some $v_i$ in the first or second state. For the remaining augmentations, we consider the cases for $D = 4$ and $D = 5$ separately.\\
Suppose $D = 4$. Clearly if $L_{\max}^{S^1} = L_{\min}^{S^1}$, then the distances of all radial points is uniform and we can finish with $S' = S^1$. Otherwise, we have a consecutive pair of radial points with distances equal to $L_{\min}^{S^1}$ followed by a pair of radial points with distances equal to $L_{\max}^{S^1}$ (see Figure~\ref{fig:star4ex}). Thus we will have a radial point $v_i$ in the third state and we can perform the corresponding augmentation, after which $v_{i+1}$ will be in the second state and so we can perform another augmentation. After both these augmentations, the distances of all radial points from the centre point will be $L_{\min}^{S^1}$ and hence we have arrived at $S'$ after two rounds of augmentation.

\begin{figure}[ht!]
\centering
 \includegraphics[scale =0.8]{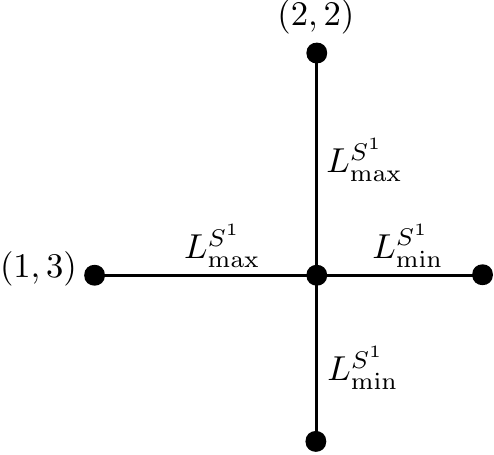}
 \caption{A possible set of star points $S^1$, where $|S^1| = 5$, after a single round of augmentation. An ordered pair $(i,j)$ labelling a radial point $v$ indicates that the augmentation of state $j$ will be used on $v$ at iteration $i$.}
 \label{fig:star4ex} 
\end{figure}

Now consider the case in which $D = 5$. Again, if $L_{\max}^{S^1} = L_{\min}^{S^1}$, then we are done, so we assume that $L_{\max}^{S^1} > L_{\min}^{S^1}$. If there are no distances of radial points from the centre other than $L_{\max}^{S^1}$ and $L_{\min}^{S^1}$, then we either have two or three consecutive radial points whose distances are $L_{\max}^{S^1}$. If there are two of these points, then similar to the case described above for $D = 4$, we perform the augmentation of the third state on the first of these points and the augmentation of the second state on the second point, resulting $S'$ (see Figure~\ref{fig:star5exa}). If there are three points whose distance is $L_{\max}^{S^1}$, then we perform the augmentation of the third state on the first two points in order, and the augmentation of the second state on the last point, to obtain $S'$ (see Figure~\ref{fig:star5exb}). Finally, suppose there is a radial distance in $S^1$ other than $L_{\min}^{S^1}$ or $L_{\min}^{S^1}$, i.e., there is a point $v_i \in S^1$ such that $L_{\min}^{S^1} < L' := d(v, v_i) < L_{\max}^{S^1}$. Hence we have two radial points of distance $L_{\min}^{S^1}$ from the centre point, two of distance $L_{\min}^{S^1}$, and one of distance $L'$ (see Figure~\ref{fig:star5exc}). Without loss of generality, we will assume that $v_{i+1}$ and $v_{i+2}$ are of distance $ L_{\max}^{S^1}$. The radial point $v_{i+1}$ is in the third state so we augment it accordingly, allowing us to augment $v_{i+2}$ according the rules of the second state. Let $S^2$ be the set of star points obtained performing these two augmentations. Hence we will have that $L_{\min}^{S^2} = L_{\min}^{S^1}$ and $L_{\max}^{S^2} = L'$, where the distances of $v_i$, $v_{i+1}$ and $v_{i+2}$ from the centre point are all $L'$. From this we can perform the augmentation of the third state on $v_i$ and $v_{i+1}$ in order, which leaves $v_{i+2}$ in the second state. Thus after augmenting $v_{i+2}$, all radial distances will be equal to $L_{\min}^{S^1}$ and we have arrived at $S'$ after three rounds of augmentation.
\end{proof}

\begin{figure}[ht!]
\centering  
  \begin{subfigure}[b]{0.3\linewidth}
    \centering
    \includegraphics[trim = -30 -0 0 0,scale =0.7]{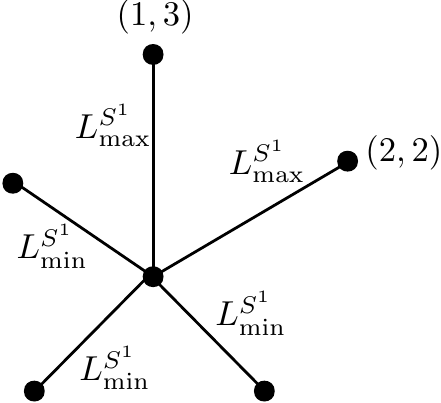}
     
    \caption{}
    \label{fig:star5exa} 
  \end{subfigure}
  \hspace{2ex}
  \begin{subfigure}[b]{0.3\linewidth}
    \centering
    \includegraphics[trim = 0 0 0 0,scale =0.7]{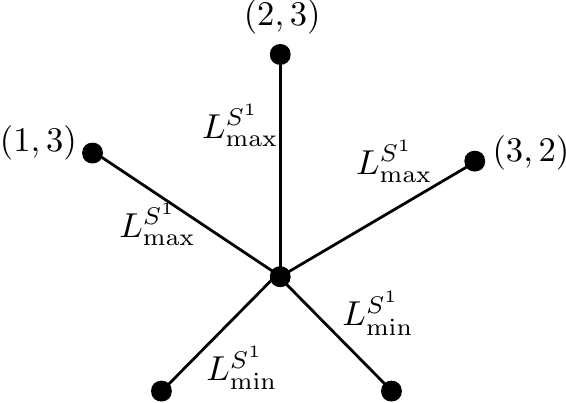} 
    \caption{}
    \label{fig:star5exb} 
  \end{subfigure}
  \hspace{2ex}
  \begin{subfigure}[b]{0.3\linewidth}
    \centering
    \includegraphics[trim = -7 -0 0 0,scale =0.7]{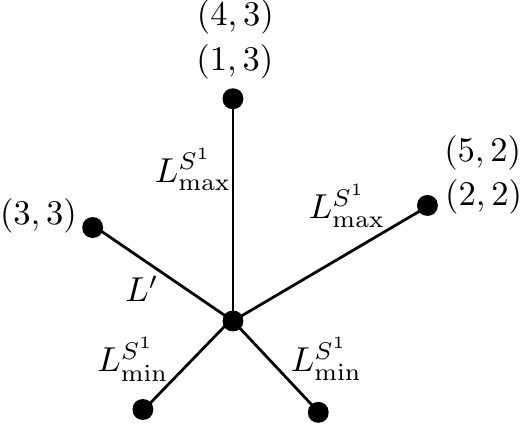} 
    \caption{}
    \label{fig:star5exc} 
  \end{subfigure} 
    \caption{Possible sets of star points $S^1$, where $|S^1| = 6$, after a single round of augmentation. An ordered pair $(i,j)$ labelling a radial point $v$ indicates that the augmentation of state $j$ will be used on $v$ at iteration $i$.}
    \label{fig:star5ex}
\end{figure}

Our test instances, which we refer to as \textit{special instances}, were created by generating star points such that approximately $10\%$ of the vertices of an instance were centre points of of a generated $S_4$, and approximately $5\%$ were centre points of a generated $S_5$. These proportions were chosen so that there would be enough points of high degree to make the instances interesting, whilst also most likely maintaining the ranking order of the most frequently occurring degrees, i.e., we would expect that the number of points of degree 4 be less than the number of points of degree 1,2, and 3 respectively, and we would expect the number of points of degree 5 be less than the number of points of degree 1,2,3, and 4 respectively. Our approach to placing the star points was to randomly construct an empty square subgrid within a 10000x10000 grid and then generate a star such that the length of the longest edge in the star was strictly less than half the length of the subgrid. The star points are then placed within the subgrid such that the centre point lies at the centre of the subgrid, and then the subgrid is ``blocked out" so that no more points can be positioned within it. This ensures that the star points will induce the appropriate star in the MST. The next star is placed in a square subgrid in the remaining space and this process continues until all star points have been generated and placed. Finally, the remaining points are placed uniformly randomly in the remaining space.\\
Although the special instances are somewhat contrived, they do ensure that the MSTs of the point sets have a relatively significant number of points of high degree and so can be used to test algorithms for $\delta = 3$ and $4$. 

\section{Local Edge Swap Approximation Algorithms}
A \textit{local edge swap} algorithm for the $\delta$-MST problem is any edge swapping algorithm in which the swaps are performed in a way such that the edge being swapped in will always share a common vertex with the edge being swapped out. In this section we will outline the local edge swap algorithms for the $\delta$-MST problem given by  Khuller, Raghavachari and Young \cite{khuller1996low} and Chan \cite{chan2003euclidean}. Unlike most of the other algorithms that we implemented, these algorithms are approximation algorithms with known upper bounds for their performances. Another advantageous feature of these algorithms are their running times, which are $O(n\log n)$ in the worst case.

\subsection{Khuller, Raghavachari and Young 1.5-factor 3-MST Algorithm}
Of the three local edge swap algorithms we explored, the algorithm of Khuller, Raghavachari and Young \cite{khuller1996low} is easiest to describe. The algorithm, which we refer to as the KRY algorithm, is able to produce a 3-MST for any point set in a metric space such that the total weight of the edges in the output tree is no worse than 1.5 times the total weight of an MST for the point set. It works by starting with a rooted MST $T$ for the input point set in the plane, to which it recursively applies local edge swaps in the following manner. If $v$ is the current root of $T$ with children $v_1,v_2, \dots, v_k$, then the edges $(v,v_2), \dots, (v,v_k)$ are replaced by a path through the vertices $v_1,v_2, \dots, v_k$. It then recursively applies the algorithm to each of the subtrees rooted at $v_1,v_2, \dots, v_k$ in turn, which we denote by  $T_{v_1},T_{v_2}, \dots, T_{v_k}$ respectively. See Figure~\ref{fig:ChanPaperKhullerFig} for an illustration. Let $w(T)$ denote the total weight of the tree $T$, and let $w(u,v)$ denote the weight of the edge $(u,v)$. We describe the KRY algorithm as Algorithm~\ref{alg:khuller}.

\begin{algorithm}[htb]
\caption{: KRY} \label{alg:khuller}
\begin{tabbing}
  ....\=....\=....\=....\=................... \kill \\ [-2ex]
\textbf{Input:} A rooted tree $T$ over a point set $P$ with root $v$ and a partially built\\ solution $T^*$.\\
\\
\textbf{if} $v$ has at least one child\\
\> Let the children of $v$ be $v_1, \dots, v_k$, where $k$ is the number of children of $v$,\\
\> such that $\sum_{i = 1}^{k-1} w(v_i, v_{i+1})$ is minimum if $k>1$.\\
\> Add the edge $(v,v_1)$ to $T^*$\\
\> Perform Algorithm~\ref{alg:khuller} with $T:=T_{v_1}$ as input.\\
\> \textbf{if} $k \geq 2$\\
\>\> \textbf{for} $i=1,\dots, k-1$\\
\>\>\> Add the edge $(v_i,v_{i+1})$ to $T^*$\\
\>\>\> Perform Algorithm~\ref{alg:khuller} with $T:=T_{v_{i+1}}$ as input.
\end{tabbing}
\end{algorithm}

\begin{figure} 
        \centering
        \includegraphics[scale=0.7]{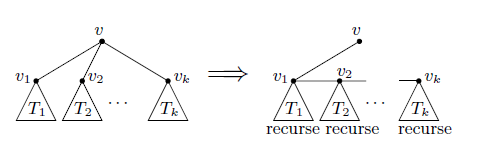}
        \caption{The illustration of the Khuller, Raghavachari and Young algorithm \cite{chan2003euclidean}.}
\label{fig:ChanPaperKhullerFig}        
\end{figure}

Although Algorithm~\ref{alg:khuller} uses a permutation of the children of $v$ that minimises the total weight of the path through the children, we also consider a version of the algorithm that minimises the bottleneck length of the path instead. We denote this bottleneck version of the KRY algorithm as the KRY-B algorithm. For both algorithms, the length of the longest edge in the outputted tree will be no worse than twice that of the MST \cite{andersen2016minimum}.

\subsection{Chan's 1.1381-factor 4-MST Algorithm} \label{sec:Chan4}
Another recursive local edge swap approximation algorithm is Chan's $4$-MST algorithm which produces a $4$-MST for a point set in the Euclidean plane such that the total weight of the outputted tree is no worse than 1.1381 times the total weight of the MST \cite{jothi2009degree}, and the longest edge in the outputted tree is no worse than 1.7321 times that of the MST \cite{andersen2016minimum}. Chan's algorithm works in a similar fashion to the KRY in that it performs local edge swaps recursively over rooted subtrees, however the edge swaps do not create a path over the child vertices of the current root as they did with KRY. The recursive steps of the algorithm are illustrated in Figure~\ref{fig:ChanPaperFig}. Choosing a good permutation of the children is necessary to achieve the 1.1381 bound and the details of the algorithm can be found in \cite{chan2003euclidean}. We refer to this algorithm as the Chan4 algorithm.

\begin{figure}[h] 
        \centering
        \includegraphics[scale=0.4]{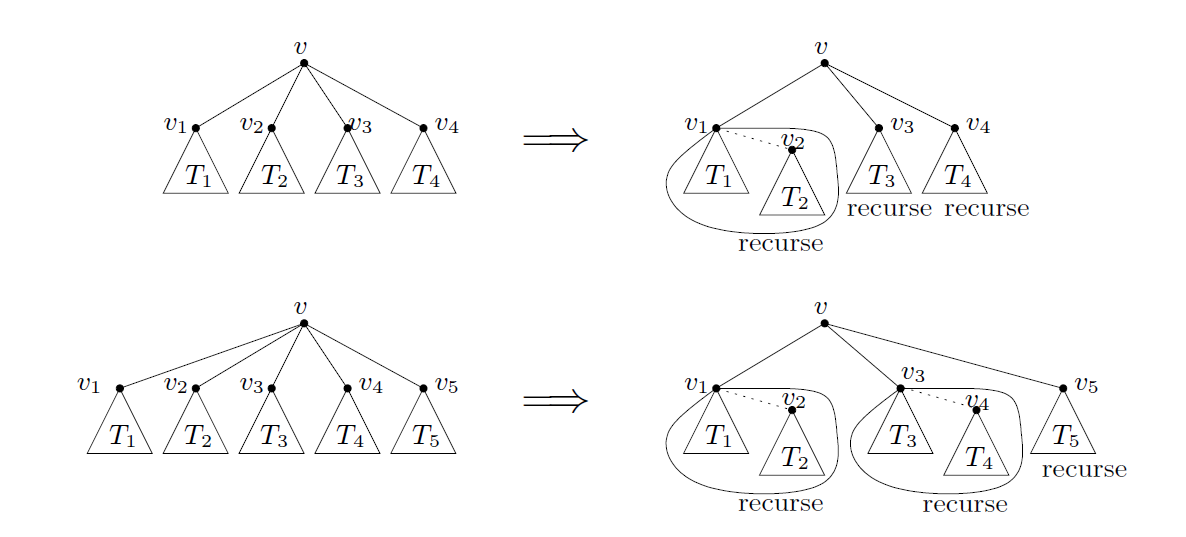}
        \caption{The illustration of Chan's 4-MST algorithm \cite{chan2003euclidean}.}
\label{fig:ChanPaperFig}        
\end{figure}

\subsection{Chan's 1.402-factor 3-MST Algorithm}
Chan also gives a 1.402-factor approximation algorithm for 3-MST problem in the Euclidean plane which uses similar local edge swap ideas as the previous algorithm for the 4-MST, although the algorithm itself is considerably more complicated. As such, we will omit any description of the algorithm. We refer to this algorithm as the Chan3 algorithm.

\section{Generalised MST Edge Swap Algorithms}
In this section we wish to generalise the $3$-MST algorithm of Khuller et al \cite{khuller1996low} and the $3$-MST and $4$-MST algorithms of Chan \cite{chan2003euclidean} to a general edge swap algorithm framework. Before we describe this approach, we first define some concepts.\\
Given a graph $G = (V,E)$ and an edge $e$ whose endpoints are in $V$, we let $G + e$ denote the graph $(V,E \cup \{e\})$, i.e, $G+e$ is the graph obtained by adding the edge $e$ to $G$. Similarly, we let $G - e$ denote the graph $(V,E \setminus \{e\})$, i.e, $G-e$ is the graph obtained by removing the edge $e$ from $G$. Let $T = (V,E')$ be a spanning tree for a graph $G = (V,E)$, where $E' \subset E$. Let $u$ be an edge in $E$ that is not in $T$. Hence the graph $T +u$ will contain a unique cycle $C$. Let $v$ be an edge of $C$ such that $v \neq u$. Hence if we remove $v$ from $T$ to obtain the graph $T' := (T+u) -v$, then $T'$ will be a spanning tree distinct from $T$. We say that any $T'$ obtained this way from $T$ is a \textit{neighbour} of $T$, and we let $N(T)$ denote the set of all neighbours of $T$, which we refer to as the \textit{neighbourhood} of $T$. An \textit{edge swap} is the operation that transforms any spanning tree $T$ into one of its neighbours $(T+u) -v$, and we say that $v$ is the edge that is \textit{swapped out} from $T$ and that $u$ is the edge that is \textit{swapped in}. If $u$ and $v$ are both incident to a common vertex, then we say that the edge swap is \textit{local}. The following lemma gives us a bound for the size of a neighbourhood of a spanning tree.
\begin{lemma}\label{lem:|N(T)|}
Given a spanning tree $T = (V,E)$, $|N(T)| = O(|V|^3)$.
\end{lemma}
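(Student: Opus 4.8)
The plan is to count the number of distinct neighbours of $T$ by counting the number of ways to perform a single edge swap. Recall that an edge swap is determined by a choice of a non-tree edge $u$ to swap in and a tree edge $v \neq u$ lying on the unique cycle $C$ created in $T + u$. Since $T$ is a spanning tree on $|V|$ vertices, the non-tree edges $u$ are precisely the edges of the complete graph that are not in $T$; as the complete graph on $|V|$ vertices has $\binom{|V|}{2} = O(|V|^2)$ edges, there are $O(|V|^2)$ choices for $u$.

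For each fixed choice of $u$, I would bound the number of valid choices of $v$. The edge $v$ must be an edge of the unique cycle $C$ in $T + u$, and $C$ can contain at most $|V|$ edges since it is a cycle in a graph on $|V|$ vertices. Excluding $u$ itself, this gives at most $|V| - 1 = O(|V|)$ choices for the edge to swap out. Hence the total number of ordered pairs $(u, v)$ giving rise to an edge swap is at most $O(|V|^2) \cdot O(|V|) = O(|V|^3)$.

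Finally, I would note that each neighbour $T' \in N(T)$ arises from at least one such pair $(u,v)$ via $T' = (T + u) - v$, so the map from pairs to neighbours is surjective onto $N(T)$. Therefore $|N(T)|$ is bounded above by the number of such pairs, giving $|N(T)| = O(|V|^3)$ as claimed. I do not expect any serious obstacle here: the argument is a routine double-counting bound, and the only care needed is to observe that the two multiplicative factors are $O(|V|^2)$ for the choice of $u$ and $O(|V|)$ for the choice of $v$, and that distinct neighbours cannot inflate the count beyond the number of pairs. One could sharpen the constant (and even observe that many pairs coincide or are invalid), but since the statement only asserts an $O(|V|^3)$ upper bound, the crude product bound suffices.
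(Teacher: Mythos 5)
Your proof is correct and follows essentially the same argument as the paper: bound the number of swap-in edges by $O(|V|^2)$, bound the edges of the induced cycle by $O(|V|)$, and multiply. The only cosmetic difference is that the paper counts non-tree edges exactly as $|V|(|V|-1)/2 - (|V|-1)$ while you use $\binom{|V|}{2}$, which changes nothing asymptotically.
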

\begin{proof}
The total number of edges that can be swapped into $T$ is $|V|(|V|-1)/2 -(|V| - 1) = O(|V|^2)$. After an edge $e$ is swapped into $T$, the number of edges in the unique cycle of $T +e$ is $O(|V|)$. Hence the total number of neighbours of $T$ is $O(|V|^3)$.
\end{proof}
The aforementioned algorithms of Khuller et al and Chan both start with an MST and use a sequence local edge swaps to eventually obtain a feasible $\delta$-MST. Furthermore, these algorithms have additional rules for choosing swaps so that it can be guaranteed that the algorithm obtains a feasible $\delta$-MST whose total weight is within some constant factor of that of the initial MST. For our approach we will incorporate the core idea of these algorithms, however we will not restrict ourselves to local edge swaps and instead shall consider arbitrary edge swaps. Given a complete graph $G$ for a given point set in the plane whose cardinality is at least three, we can outline our procedure as follows.

\begin{enumerate}
\item Find an MST $T$ for $G$.
\item Let $T$:= a neighbour of $T$ ($T$ will have a non-empty neighbourhood since $T \neq G$).
\item Repeat the previous step until a desired feasible solution is obtained.
\end{enumerate}

This outline omits much of the detail required for a functioning algorithm as we have not given specific termination criteria, nor a way of ensuring that the method will not infinitely cycle through a sequence of neighbours. There are multiple ways of addressing these details and we will describe some approaches in the subsequent sections.

\subsection{Feasibility Local Search}
The idea of the following algorithm is to have an algorithm that prioritises feasibility above all other criteria when searching through neighbours. As such, when it comes to choosing the next neighbour of the current spanning tree, this algorithm will choose a neighbour that is the closest to being feasible of all trees in the neighbourhood. We will refer to this algorithm as the \textit{feasibility local search} algorithm due to its connection to the local search optimisation algorithm.\\
In order to describe this algorithm, we must have some measure of feasibility. Let $d_G(v)$ denote the degree of the vertex $v$ in the graph $G$. Given a spanning tree $T = (V,E)$ and degree bound $\delta$, we let $f(T)$ be the \textit{feasibility error} of $T$, where \[f(T):= \sum_{v \in V} \max\{d_T(v) - \delta, 0\}.\] Hence the spanning tree $T$ is feasible if and only if it has a feasibility error of 0, and having such an error will be the stopping criteria of our algorithm. To ensure that the algorithm does not cycle through a sequence of neighbours, the algorithm will only choose a neighbour which has a feasibility error that is strictly less than the current spanning tree. We prove that such a neighbour always exists for every iteration in the following lemma.
\begin{lemma}
Let $T$ be spanning tree with $f(T) > 0$. There exits a spanning tree $T' \in N(T)$ such that $f(T') < f(T)$. 
\end{lemma}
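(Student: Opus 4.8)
The plan is to prove existence constructively: I would produce an explicit edge swap that strictly reduces the feasibility error. Since $f(T) > 0$, there is at least one vertex $w$ with $d_T(w) > \delta$, and because $\delta \geq 2$ this forces $d_T(w) \geq 3$. I would select an edge $v = (w,x)$ incident to $w$ as the edge to be swapped out, and then search for a suitable edge $u$ to swap in so that the net effect on $f$ is negative.

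First I would analyse how a single edge swap changes $f$. Removing $v$ lowers $d(w)$ and $d(x)$ each by one, and because $d_T(w) > \delta$ the term $\max\{d_T(w)-\delta,0\}$ drops by exactly $1$, while the term at $x$ can only decrease or stay the same. Adding an edge $u = (a,b)$ raises $d(a)$ and $d(b)$ each by one; the corresponding term increases by $1$ only if that endpoint already had degree at least $\delta$, and by $0$ otherwise. Hence the swap strictly decreases $f$ provided the two endpoints $a,b$ of the swapped-in edge both have degree at most $\delta - 1$ in $T$ and are distinct from $w$ and $x$, so that the degree changes do not interfere.

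Next I would locate such endpoints. Removing $v$ splits $T$ into two subtrees $A \ni w$ and $B \ni x$; any edge reconnecting $A$ to $B$ other than $v$ is a valid swap-in edge, since the unique $T$-path between its endpoints must run through $v$, placing $v$ on the cycle of $T+u$. Because $w$ has at least two neighbours besides $x$, the component $A$ contains at least three vertices, hence at least two leaves, so it has a leaf $a \neq w$ with $d_T(a) = 1 \leq \delta - 1$. For the other endpoint I would take a leaf of $B$: if $B$ has at least two vertices I can pick a leaf $b \neq x$ with $d_T(b) = 1$, giving a clean swap with $f(T') \leq f(T) - 1$.

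The \emph{main obstacle} is the degenerate case in which $x$ is a leaf of $T$, so that $B = \{x\}$ and the only candidate for the second endpoint is $x$ itself, which overlaps the swapped-out edge. I would handle this separately by swapping in $u = (a,x)$ with $a$ the chosen leaf of $A$: now $x$ loses $v$ and gains $u$, so its degree is unchanged, while the excess at $w$ still drops by one and the leaf $a$ contributes nothing, again yielding $f(T') = f(T) - 1$. Verifying that $(a,x) \notin T$ (it cannot equal $v$ since $a \neq w$) and that it reconnects the two components completes this case, so in every case a strictly improving neighbour exists.
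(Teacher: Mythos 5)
Your proof is correct and follows essentially the same route as the paper's: delete an edge incident to a vertex of excess degree, then reconnect the two resulting components by an edge joining a leaf of each, noting that the new endpoints end up with degree at most $2 \leq \delta$ while the violating vertex's excess drops by one. The only difference is that you explicitly treat the degenerate case where the far component is the singleton $\{x\}$; the paper glosses over this (a one-vertex component has no vertex of degree one, so its ``leaf'' must be taken to be $x$ itself), and your separate handling of it makes the argument slightly more careful than the published one.
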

\begin{proof}
Since $f(T) > 0$, there exists a vertex $v$ in $T$ such that $d_T(v) > \delta$. If we remove an edge $e$ incident to $v$, then the resultant graph $T-e$ has two connected components, denoted $T_1$ and $T_2$. Both of these connected components are trees, and hence have leaves (vertices of degree one). Let $v_1$ be a leaf of $T_1$, let $v_2$ be a leaf of $T_2$ and let $e' = (v_1,v_2)$. Note that $v \neq v_1$ and $v \neq v_2$ since $\delta \geq 2$. If we add the edge $e'$ to the current graph, then the resultant graph $T' = T -e +e'$ is a spanning tree. Moreover $f(T') < f(T)$ since we removed an edge incident to $v$ to decrease its degree, and the degrees of $v_1$ and $v_2$ in $T'$ are both 2 which is no more than $\delta$.   
\end{proof}

We describe the feasibility local search algorithm (FLS) as Algorithm~\ref{alg:fls}. Let $G$ be the input graph with $n$ vertices. The maximum number of times the while loop will be executed is $f(T)$, where $T$ is the initial MST. Since $T$ is in the Euclidean plane, we know that the degree of every vertex of $T$ is at most 6, hence $f(T) = O(n)$. Note that to obtain a neighbour of $T$ with less feasibility error than $T$, we must remove an edge from a vertex in $T$ with degree stricty greater than $\delta$. Hence there are $O(n)$ possible edges to delete since the degree of each vertex is constant. Removing an edge separates the graph into two subtrees $T_1$ and $T_2$. To minimise the feasibility error of the new tree, after removing an edge we should then add an edge between $T_1$ and $T_2$ such that the endpoints of this edge have degree strictly less than $\delta$ in their respective subtrees. We can search the subtrees using a depth-first-search to find such a pair of vertices in $O(n)$ time. Using the fact that an MST can be found in under cubic time \cite{pettie2002optimal}, we can obtain the complexity of the algorithm.

\begin{lemma}\label{lem:complxFLS}
For an input graph with $n$ vertices, the complexity of Algorithm~\ref{alg:fls} is $O(n^3)$.
\end{lemma}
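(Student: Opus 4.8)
The plan is to bound the total running time as the product of two quantities --- the number of iterations of the \texttt{while} loop in Algorithm~\ref{alg:fls} and the cost of a single iteration --- and then to check that the one-off cost of computing the initial minimum spanning tree is dominated by this product.

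First I would bound the iteration count. By the preceding lemma, whenever $f(T) > 0$ the current tree has a neighbour of strictly smaller feasibility error, so the algorithm can always make progress, and by construction each pass of the loop decreases $f$ by at least one. It therefore suffices to bound the feasibility error of the starting tree. Since the initial $T$ is an MST of a planar point set, every vertex has degree at most $6$, whence
\[ f(T) \;=\; \sum_{v \in V} \max\{d_T(v) - \delta, 0\} \;\le\; (6 - \delta)\,|V| \;=\; O(n). \]
Because $f$ is a non-negative integer that strictly decreases at each iteration, the loop runs $O(n)$ times.

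The main work is bounding the cost of a single iteration, and this is where care is needed: by Lemma~\ref{lem:|N(T)|} the neighbourhood $N(T)$ has size $O(|V|^3)$, so a naive scan of all neighbours would cost $O(n^3)$ per iteration and yield only an $O(n^4)$ bound. The key observation I would use to cut this down is that only a small, structured family of swaps can possibly reduce $f$. Removing an edge lowers the degree of exactly its two endpoints, so an edge whose removal reduces the feasibility error must be incident to a vertex of degree greater than $\delta$; there are $O(n)$ such edges, since there are $O(n)$ over-degree vertices and each has constant degree. For each candidate edge, its deletion splits $T$ into two subtrees $T_1$ and $T_2$, and I would reconnect them with an edge whose two endpoints have degree strictly less than $\delta$ in their respective subtrees (such endpoints always exist --- for instance the leaves produced in the preceding lemma, which have degree $1 < \delta$), so that the reattachment creates no new over-degree vertex and the net effect is $f(T') \le f(T) - 1$. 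Locating such endpoints, and indeed the pair minimising the resulting feasibility error, is a single depth-first search costing $O(n)$. Thus each iteration examines $O(n)$ deletions at $O(n)$ cost apiece, for $O(n^2)$ per iteration.

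Finally I would assemble the pieces: $O(n)$ iterations at $O(n^2)$ each gives $O(n^3)$ for the search phase, and the initial MST can be computed in sub-cubic time \cite{pettie2002optimal}, which is absorbed into the $O(n^3)$ total. The step I expect to be the genuine obstacle is the per-iteration argument --- specifically, justifying rigorously that restricting attention to deletions at over-degree vertices with a DFS-guided reattachment is enough to realise the minimum-feasibility-error neighbour (or at least a strictly improving one), so that the iteration count stays $O(n)$ and the full $O(|V|^3)$-size neighbourhood never has to be enumerated.
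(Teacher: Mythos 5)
Your proposal is correct and follows essentially the same route as the paper: bound the number of \texttt{while}-loop iterations by $f(T) = O(n)$ using the degree-$6$ bound on Euclidean MSTs, restrict the per-iteration search to the $O(n)$ edges incident to over-degree vertices with an $O(n)$ depth-first search to find a valid reattachment, and absorb the sub-cubic MST computation of \cite{pettie2002optimal} into the $O(n^3)$ total. The implementation subtlety you flag at the end (that the restricted search suffices to realise a minimum-feasibility-error neighbour rather than merely a strictly improving one) is glossed over in the paper at the same level of rigour, so your treatment is no weaker than the original.
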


\begin{algorithm}[tb]
\caption{: FLS} \label{alg:fls}
\begin{tabbing}
  ....\=....\=....\=....\=................... \kill \\ [-2ex]
\textbf{Input:} A complete graph $G$ and degree bound $\delta$.\\
\\
Let $T$ be an MST for $G$.\\
\textbf{while} $f(T) > 0$,\\
\> Let $T' \in N(T)$ be the neighbour of $T$ such that $f(T')$ is minimum.\\
\> $T: = T'$.\\
\textbf{return} $T$.
\end{tabbing}
\end{algorithm}

Algorithm~\ref{alg:fls} does not take into account the weight of the neighbours it searches through; it merely greedily chooses the neighbour that is the best in terms of feasibility. We can easily modify Algorithm~\ref{alg:fls} so that of the neighbours with smaller feasibility error, the one with the smallest weight is chosen. This modification is presented as Algorithm~\ref{alg:fls2}, which we refer to as the \textit{feasibility weight local search} algorithm (FWLS). The worst-case time complexity of the algorithm is given in the following lemma.

\begin{lemma} \label{lem:complxFWLS}
For an input graph with $n$ vertices, the complexity of Algorithm~\ref{alg:fls2} is $O(n^4)$.
\end{lemma}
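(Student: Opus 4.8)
The plan is to mirror the complexity analysis of Algorithm~\ref{alg:fls} given in Lemma~\ref{lem:complxFLS}, and to pinpoint the single place where the weight tie-breaking rule of Algorithm~\ref{alg:fls2} inflates the running time by a factor of $n$. As in the FLS analysis, the outer \textbf{while} loop is driven entirely by the feasibility error: each iteration replaces $T$ by a neighbour of strictly smaller feasibility error, and the initial MST satisfies $f(T) = O(n)$ because every vertex of a Euclidean MST has degree at most $6$. Hence the loop executes $O(n)$ times, exactly as before; the weight tie-breaking does not change the number of iterations, since FWLS still selects a neighbour of minimum feasibility error and merely breaks ties by weight.

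First I would bound the cost of one iteration. To find a neighbour of minimum feasibility error we only need to consider swaps whose deleted edge is incident to a vertex $v$ with $d_T(v) > \delta$: deleting any other edge cannot reduce the excess at any vertex, while the re-inserted edge can only raise two degrees, so no such swap lowers $f$. Since there are $O(n)$ vertices of degree exceeding $\delta$ and each has constant degree, there are $O(n)$ candidate edges to delete. For each deleted edge $e$, the tree $T-e$ splits into components $T_1$ and $T_2$, and every valid neighbour is obtained by re-inserting one of the $O(n^2)$ edges crossing the cut. For each such candidate swap the new feasibility error and the new weight can both be computed in $O(1)$ time by maintaining the current degrees and total weight and updating them incrementally: $f$ changes only through the two endpoints of the deleted and inserted edges, and $w$ changes by $w(e')-w(e)$. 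Scanning all candidates and keeping the one of minimum feasibility error, with ties broken by weight, therefore costs $O(n)\cdot O(n^2) = O(n^3)$ per iteration, and $O(n)\cdot O(n^3) = O(n^4)$ over all iterations. Computing the initial MST in sub-cubic time is dominated by this bound, giving the claimed $O(n^4)$.

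The main obstacle is the step that distinguishes FWLS from FLS: in Lemma~\ref{lem:complxFLS} it suffices to locate \emph{any} feasibility-reducing re-insertion, which a single depth-first search finds in $O(n)$ time, whereas here we must identify the \emph{minimum-weight} re-insertion among all feasibility-optimal ones, and I expect no shortcut that avoids inspecting all $O(n^2)$ cut-crossing edges for a given deleted edge in the worst case. The remaining care is purely bookkeeping: one must verify that restricting deletions to edges at over-degree vertices really does capture every neighbour of minimum feasibility error, and that the incremental $O(1)$ updates of $f$ and $w$ are correct, so that the per-candidate cost stays constant and the crude enumeration bound of $O(n^2)$ candidate re-insertions per deleted edge is both attained and sufficient.
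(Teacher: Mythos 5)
Your proof is correct and follows essentially the same route as the paper's: $O(n)$ iterations of the while loop (since the initial MST has $f(T)=O(n)$ and each iteration strictly decreases $f$), times an $O(n^3)$ enumeration of candidate swaps per iteration with $O(1)$ evaluation of weight and feasibility error per candidate, giving $O(n^4)$. The only cosmetic difference is that the paper obtains the per-iteration $O(n^3)$ bound by citing Lemma~\ref{lem:|N(T)|} for the full neighbourhood, whereas you count $O(n)$ feasibility-reducing deletions times $O(n^2)$ cut-crossing re-insertions --- the same bound via a transposed (and slightly restricted) enumeration, and note that your ``$O(n)$ candidate deletions'' claim holds trivially because a spanning tree has only $n-1$ edges, so it does not depend on the shaky mid-run assumption that over-degree vertices keep constant degree.
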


\begin{proof}
As mentioned previously, $f(T) = O(n)$. Combining this with Lemma~\ref{lem:|N(T)|} and the fact that given the weight of $T$, we can determine the weight of a neighbour of $T$ in constant time, we obtain a worst-case complexity for Algorithm~\ref{alg:fls2} of $O(n^4)$.
\end{proof}

\begin{algorithm}[tb]
\caption{: FWLS} \label{alg:fls2}
\begin{tabbing}
  ....\=....\=....\=....\=................... \kill \\ [-2ex]
\textbf{Input:} A complete graph $G$ and degree bound $\delta$.\\
\\
Let $T$ be an MST for $G$.\\
\textbf{while} $f(T) > 0$,\\
\> Let $N:= \{T' \in N(T):f(T') < f(T)\}$\\
\> Let $T^*$ be the tree in $N$ such that $w(T^*)$ is minimum.\\
\> $T: = T^*$.\\
\textbf{return} $T$.
\end{tabbing}
\end{algorithm}

Note that we can alter Algorithm~\ref{alg:fls2} to focus on bottleneck length by replacing $w(T^*)$ with $b(T^*)$, where $b(T)$ denotes the length of a longest edge in the tree $T$. We refer to this modified version of FWLS as FWLS-B. Also note that for both of FLS and FWLS, we are requiring that the next chosen neighbour must have a feasibility error strictly smaller than that of the current spanning tree. Whilst this condition helps to ensure that the algorithm will terminate at a feasible solution, it is not a necessary condition in order to have a practical edge swap algorithm. In fact, neither the $3$-MST algorithm of Khuller et al., nor the $4$-MST algorithm of Chan have this condition. In the next two sections, we describe algorithms which also do not require this condition.



\subsection{Bi-Criteria Local Search}
The bi-criteria local search algorithm (BCLS) is similar to FWLS with one key exception; whilst FWLS searches for the smallest weight neighbour of the current tree $T$ from the neighbourhood $N:= \{T' \in N(T):f(T') < f(T)\}$, BCLS searches for the smallest neighbour from the neighbourhood $\hat{N}~:=~ \{T' \in N(T):f(T') \leq f(T)\}$. Clearly, a local search using such a neighbourhood may reach a point in which the current tree $T$ has non-zero feasibility error but the smallest weight neighbour of $T$ in $\hat{N}$ has the same weight and feasibility error as $T$, which may result in the algorithm being stuck in an infeasible local minimum. In order to mitigate this, whenever the smallest weight neighbour $T^*$ in $\hat{N}$ has weight equal to $w(T)$, with $f(T^*) = f(T)$, we replace $\hat{N}$ with $N$; the neighbourhood from FWLS with the strict inequality. Thus the feasibility error will be reduced in the next iteration, after which we can go back to using $\hat{N}$ again. The details of BCLS are given in Algorithm~\ref{alg:bcls}.

\begin{algorithm}[tb]
\caption{: BCLS} \label{alg:bcls}
\begin{tabbing}
  ....\=....\=....\=....\=................... \kill \\ [-2ex]
\textbf{Input:} A complete graph $G$ and degree bound $\delta$.\\
\\
Let $T$ be an MST for $G$.\\
\textbf{while} $f(T) > 0$,\\
\> Let $\hat{N}:= \{T' \in N(T):f(T') \leq f(T)\}$\\
\> Let $T^*$ be the tree in $\hat{N}$ such that $w(T^*)$ is minimum.\\
\> \textbf{if} $w(T^*) = w(T)$ and $f(T^*) = f(T)$,\\
\> \> Let $N:= \{T' \in N(T):f(T') < f(T)\}$\\
\> \> Let $T^*$ be the tree in $N$ such that $w(T^*)$ is minimum.\\
\> $T: = T^*$.\\
\textbf{return} $T$.
\end{tabbing}
\end{algorithm}

\subsection{Diminishing Neighbourhood Local Search} \label{sec:DNLS}
In this section, we describe an edge swap algorithm such that the sequence of neighbours it chooses may not be non-increasing in terms of feasibility error. As such, we will need additional rules to ensure that such an algorithm will eventually reach a feasible solution within a reasonable amount of time. The motivation of this approach is to have some way of generalising the edge swapping methods seen in the previously described constant factor approximation algorithms, namely KRY and Chan4, into a less restrictive local search algorithm.\\
One observation of the KRY and Chan4 algorithms is that once a node is``visited", i.e., becomes the root of a subtree, its feasibility error is never increased in any subsequent iterations, whereas nodes that are not yet visited may have their feasibility increased in a future iteration. With that observation in mind, we developed the following rule to be used by our new algorithm; once a node has had its feasibility error decrease as the result of an edge swap, its feasibility error may not increase in any future iterations. We will show that this rule is sufficient to create an edge swap algorithm that terminates at a tree of zero feasibility error using polyhedral description of such an algorithm.\\
First we define an integer programming formulation for the MST. For a set of vertices $S \subseteq V$, let $E(S)$ denote the set of edges in $E$ that have both endpoints in $S$, i.e., $E(S) = \{(e_1,e_2) \in E : e_1,e_2 \in S\}$. For a vertex $v \in V$, let $E(v)$ denote the set of edges in $E$ that have $v$ as an endpoint, i.e., $E(v) = \{(v,e_1) \in E : e_1 \in V \}$. For each $e \in E$, let $x_e$ be a binary decision variable that is unit valued when the edge $e$ is in the spanning tree, and zero otherwise. The integer program for the MST problem is\\

\begin{alignat}{3}
\text{minimise }   \sum_{e \in E} \ \ &w(e)x_e \label{eqn:minsumobj1}  \\ 
\text{s.t. }  \sum_{e \in E} \ \ & x_e  =  |V| -1  \\
\sum_{e \in E(S)} &x_e  \leq |S| - 1 & \quad \forall S \subset V\\
& x_e \in \{0,1\} & \quad \forall  e \in E \label{eqn:minsumend1}
\end{alignat}

Note that the integrality conditions of the above formulation can be omitted since the linear programming relaxation will always yield an integer optimal solution \cite{edmonds1971matroids}. In fact, all basic feasible solutions are integer valued and hence represent spanning trees. Moreover, there is a bijection between the set of basic feasible solutions for the formulation and the set of spanning trees of the input set. By adding degree constraints to the integer program, the $\delta$-MST problem can be formulated as follows.

\begin{alignat}{3}
\text{minimise }   \sum_{e \in E} \ \ &w(e)x_e \label{eqn:minsumobj2}  \\ 
\text{s.t. }  \sum_{e \in E} \ \ & x_e  =  |V| -1  \\
\sum_{e \in E(S)} &x_e  \leq |S| - 1 & \quad \forall S \subset V\\
\sum_{e \in E(v)} &x_e  \leq \delta & \quad \forall  v \in V  \label{eqn:degreebounds}\\
& x_e \in \{0,1\} & \quad \forall  e \in E \label{eqn:minsumend2}
\end{alignat}

Unlike the MST formulation, the linear programming relaxation of the $\delta$-MST problem may yield fractional optimal solutions. Let $A$ be the polytope determined by the convex hull of feasible solutions to \ref{eqn:minsumobj1}~-~\ref{eqn:minsumend1}, and similarly, let $B$ be the polytope determined by the convex hull of feasible solutions to \ref{eqn:minsumobj2}~-~\ref{eqn:minsumend2}. Thus we have that $B \subseteq A$. The integer optimal points on the boundary of $B$ will be basic feasible solutions to \ref{eqn:minsumobj1}~-~\ref{eqn:minsumend1}, hence these boundary points of $B$ are also contained in the boundary of $A$.
\\~\\
Our goal to is to add successive cutting planes to $A$ to find an extreme point of $B$, where the cutting planes come from relaxations of the constraints \ref{eqn:degreebounds}. We outline the approach as follows.
\begin{enumerate}
\item Let $x$ be an integer optimal solution to the IP formulation $F$ := \ref{eqn:minsumobj1}~-~\ref{eqn:minsumend1}.\\
\item If $x$ is on the boundary of $B$ then finish, otherwise let $x'$ be a basic feasible solution of $F$ that is adjacent to $x$ on the boundary of $A$.\\
\item Add the constraint $\sum_{e \in E(v')} x_e  \leq \delta +c$ to $F$, for some $v' \in V$ and $c \in \mathbb{Z}_+$ such that $x'$ is still feasible in $F$, but now $x$ is infeasible in $F$.\\
\item Let $x := x'$ and go to Step 2.
\end{enumerate}
The final $x$ will be our desired solution to the $\delta$-MST formulation.\\
Extreme points of $A$ that are contained in $B$ will also be extreme points of $B$ since $B \subseteq A$. Hence, due to the stopping criteria, the above approach will eventually result in a basic feasible solution to the $\delta$-MST problem. The connection between this polyhedral algorithm and edge swapping algorithms can be seen by identifying the fact that if $T$ and $T'$ are neighbouring spanning trees, i.e., $T' \in N(T)$, then basic feasible solutions representing $T$ and $T'$ are adjacent in the polyhedron $A$.\\ 
For a solution $X$ in $A$, let $X(x_e)$ denote the value of $x_e$ in $X$. Let $X_1$ and $X_2$ be two neighbouring basic feasible solutions in $A$. Thus there exists $f,g \in E$ such that $X_1(x_g) = 1, X_2(x_g) = 0, X_1(x_f) = 0, X_2(x_f) = 1$, and $X_1(x_e) = X_2(x_e)$ for all $e \neq f,g$. Let $T_1$ and $T_2$ be the trees represented by $X_1$ and $X_2$ respectively. Then $T_2$ can be obtained from $T_1$ in a single edge swap by deleting the edge $g$ and adding the edge $f$, hence $T_2 \in N(T_1)$. Thus the polyhedral algorithm we described can be realised as an edge swap algorithm where we constrain the edge swaps we consider as the algorithm progresses.\\  
 
We describe one way of realising of this polyhedral approach as an edge swap algorithm. The chosen approach here is to restrict the set of vertices whose degrees can be increased by an edge swap. The set of such vertices decreases as the algorithm progresses until the algorithm arrives at a feasible solution. Hence we refer to our algorithm as the \textit{diminishing neighbourhood local search} (DNLS). Our algorithm works by partitioning the vertices of the input graph into \textit{locked}, \textit{semi-locked} and \textit{unlocked} vertices. Initially there is a single locked vertex, and all vertices are unlocked. A vertex only becomes locked if its current degree is strictly greater than $\delta+1$ and if its degree is then decreased after the algorithm performs an edge swap for the current iteration. Once a vertex is locked, its degree cannot be increased by any edge swap until its degree is no bigger than $\delta$, at which point it becomes semi-locked. A semi-locked vertex's degree can be increased again, but only while its degree is strictly less than $\delta$. A locked or semi-locked vertex can never become unlocked at a later stage of the algorithm. To avoid cycling, we require that every edge swap must reduce the degree of a locked vertex or create a new locked vertex from an unlocked one. We present this algorithm as Algorithm~\ref{alg:esdn}.

\begin{algorithm}[tb]
\caption{: DNLS} \label{alg:esdn}
\begin{tabbing}
  ....\=....\=....\=....\=................... \kill \\ [-2ex]
\textbf{Input:} A complete graph $G=(V,E)$, degree bound $\delta$.\\
Let $T := (V,E' \subseteq E)$ be an MST for $G$.\\
Let $L := \{\}$, $U := \{V\}, S := \{\}$ be the locked, unlocked, semi-locked vertices respectively.\\
\textbf{while} $T$ is not a feasible $\delta$-MST,\\
\> Let $N = \{T' \in N(T): T'\text{ is obtained through a single edge swap that decreases the degree}$\\
\> of a vertex with positive feasibility error but does not increase the degree of any locked \\
\>vertex or semi-locked vertex with degree $\delta$ $\}$.\\
\> Let $T^*$ be the tree in $N$ such that $w(T^*)$ is minimum.\\
\> \textbf{for each} vertex $v$ such that $d_T(v) > d_{T^*}(v)$,\\
\>\> \textbf{if} $v \in U$,\\
\>\>\>$U := U \setminus \{v\}$.\\
\>\>\> \textbf{if} $d_{T^*}(v) > \delta$,\\
\>\>\>\> $L := L \cup \{v\}$.\\
\>\>\> \textbf{else} $S:= S \cup \{v\}$.\\
\>\> \textbf{else if} $v \in L$ and $d_{T^*}(v) = \delta$,\\
\>\>\> $L := L \setminus \{v\}$.\\
\>\>\> $S := S \cup \{v\}$.\\
\> $T:= T^*$.\\
\textbf{return} $T$.
\end{tabbing}
\end{algorithm}

It can be seen from Algorithm~\ref{alg:esdn} that a locked vertex must have positive feasibility error and that the degree of locked vertices never increases as the result of an edge swap. Hence, in order to show that the algorithm terminates at a feasible $\delta$-MST solution, we prove the following.

\begin{lemma} \label{lem:locklem}
Each edge swap used by Algorithm~\ref{alg:esdn} either decreases the degree of a locked vertex, or decreases the number of unlocked vertices.
\end{lemma}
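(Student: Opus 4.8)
The plan is to exploit the structure of a single edge swap together with an invariant on the semi-locked vertices. First I would recall that a single edge swap removes one edge $g=(a,b)$ from $T$ and inserts one edge $f=(c,d)$, so the degree of a vertex $w$ strictly decreases precisely when $w \in \{a,b\}$ and $w \notin \{c,d\}$. By the definition of the neighbourhood $N$ in Algorithm~\ref{alg:esdn}, every $T^* \in N$ is obtained from a swap that decreases the degree of some vertex $v$ with positive feasibility error, i.e.\ with $d_T(v) > \delta$. Thus each iteration supplies a witness vertex $v$ whose degree the chosen swap strictly decreases and which is infeasible in $T$.

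Next I would establish the key invariant: throughout the execution, every semi-locked vertex has degree at most $\delta$, and hence feasibility error zero. I would prove this by induction on the iterations. A vertex enters $S$ in only two ways, both guaranteeing degree $\leq \delta$ at that moment: an unlocked vertex joins $S$ only when its degree has just dropped to $d_{T^*}(v) \leq \delta$, and a locked vertex joins $S$ only when its degree reaches exactly $\delta$. Once in $S$, a vertex can have its degree raised by a subsequent swap only if its current degree is strictly below $\delta$, because the definition of $N$ forbids any swap that increases the degree of a semi-locked vertex of degree $\delta$; such an increase therefore leaves the degree at most $\delta$, while decreases trivially preserve the bound. This closes the induction. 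I would also note in passing that $\{L,U,S\}$ remains a partition of $V$ at every step, since the only set operations in the update loop move a vertex out of $U$ into $L$ or $S$, or out of $L$ into $S$.

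With these two facts the conclusion follows by a short case split on the witness vertex $v$. Since $v$ has positive feasibility error, the invariant forces $v \notin S$, so $v$ is either locked or unlocked. If $v \in L$, then the swap decreases the degree of a locked vertex and the first alternative of the lemma holds. If instead $v \in U$, then because $d_T(v) > d_{T^*}(v)$ the update loop removes $v$ from $U$; as the algorithm never returns a vertex to $U$, the number of unlocked vertices strictly decreases, giving the second alternative.

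I expect the main obstacle to be the invariant on semi-locked vertices rather than the final case split. The delicate point is to verify that the two entry conditions into $S$ together with the neighbourhood restriction genuinely prevent a semi-locked vertex from ever exceeding degree $\delta$; this requires tracking the degree of such a vertex across arbitrarily many intervening iterations and confirming that the only admissible degree increases start from a degree strictly below $\delta$. Once that invariant is secured, identifying the infeasible witness vertex $v$ as necessarily locked or unlocked is immediate, and the lemma drops out.
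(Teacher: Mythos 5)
Your proof is correct and takes essentially the same route as the paper's: by the definition of $N$ the chosen swap decreases the degree of a vertex with positive feasibility error, semi-locked vertices have feasibility error zero, so that witness vertex is locked or unlocked, and in the unlocked case it permanently leaves $U$, decreasing the number of unlocked vertices. The only difference is that you establish the invariant ``semi-locked vertices have degree at most $\delta$'' by an explicit induction, whereas the paper simply asserts it as evident from the algorithm's rules; this added rigor is welcome but does not change the argument.
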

\begin{proof}
Suppose that we perform an edge swap that does not decrease the degree of a locked vertex. By the definition of $N$, the edge swap must decrease the degree of a vertex $v$ with positive feasibility error. Since semi-locked vertices have a feasibility error of 0, this implies that $v$ is an unlocked vertex. As a result of this swap, $v$ will become locked and since unlocked vertices are never created as a result of edge swaps, the total number of unlocked vertices will decrease.
\end{proof}

Since locked and unlocked vertices are the only possible vertices with positive feasibility error, and the maximum feasibility error of a vertex is bounded by a constant, Lemma~\ref{lem:locklem} implies that the total feasibility error of $T$ will eventually decrease to 0 in $O(n)$ iterations of the while loop. Since $N \subseteq N(T)$ we know from Lemma~\ref{lem:|N(T)|}, that $|N| = O(n^3)$. If a list of the degrees of each vertex is maintained, one can verify if an element of $N(T)$ is an element of $N$ in constant time. Similarly, the updating of the sets $U,L$ and $S$ can be performed in constant time. Thus the time complexity of Algorithm~\ref{alg:esdn} is also $O(n^4)$.\\
We now show that the DNLS algorithm is a realisation of the aforementioned polyhedral approach. The algorithm first starts with an MST $T$ for the point set and so we begin with an integer optimal solution as in Step 1. of the outline. Next, assuming $T$ is not already a $\delta$-MST, we peform an edge swap on $T$, which is equivalent to moving to a neighbouring basic feasible solution on the boundary of the polyhedron $A$, as in Step 2. Finally, note that each edge swap must reduce the degree of a vertex with positive feasibility error. Let $v$ be such a vertex for a given edge swap and let $d > \delta$ be its degree before the swap. Thus its degree becomes $d-1$ after the swap. Since $v$ had positive feasibility error before the swap, it must have been either a locked or unlocked vertex. Hence after the swap, $v$ will be a locked vertex. The rules of the algorithm will now prohibit the use of any edge swap that will increase the degree of $v$ above $d-1 \geq \delta$, which is equivalent to the using the constraint $\sum_{e \in E(\{v\})} x_e  \leq d-1$  as in Step 3. Thus the DNLS algorithm can be seen as a valid realisation of the polyhedral approach.

\section{$\delta$-Prim's and the Multistart Hillclimbing Procedure}
The algorithms in this section are based upon the approach of Knowles and Corne \cite{knowles2000new}. Starting with a modification of Prim's algorithm, known as \textit{$\delta$-Prim's}, the authors describe a technique which they call the \textit{randomised primal method} (RPM), which allows for the use of metaheursitcs to influence the edge choices of $\delta$-Prim's. In the following subsections, we describe $\delta$-Prim's algorithm as well as a use of RPM in conjunction with a multistart hillclimbing procedure as the metahueristic.
\subsection{$\delta$-Prim's Algorithm} \label{sec:dPrim}
This modification of Prim's algorithm was first introduced by Narula and Ho \cite{narula1980degree}. The conventional Prim's algorithm starts with a tree $T$ that contains a single vertex and during each iteration, the cheapest edge $e$ is chosen, where exactly one endpoint of $e$ is in $T$. The edge $e$ is added to $T$ and this process is repeated until $T$ is a spanning tree. $\delta$-Prim's algorithm operates in the same way but with the extra requirement that the endpoint of $e$ that is in $T$ must have degree strictly less than $\delta$. In this way we ensure that the final spanning tree satisfies the degree bound. We describe $\delta$-Prim's in Algorithm~\ref{alg:dprim}.

\begin{algorithm}[tbh]
\caption{: $\delta$-Prim's} \label{alg:dprim}
\begin{tabbing}
  ....\=....\=....\=....\=................... \kill \\ [-2ex]
\textbf{Input:} A graph $G=(V,E)$, degree bound $\delta$.\\
Let $V_T := \{v_0\}$ for some $v_0 \in V$.\\
Let $E_T := \{ \}$.\\
Let $T$ denote the graph $(V_T,E_T)$.\\
\textbf{while} $|E_T| < |V| - 1$,\\
\> Find the smallest weight edge $(u,v)$ such that $u \in V_T$, $v \notin V_T$, and $d_T(u) < \delta$.\\
\> Add $v$ to $V_T$.\\
\> Add $e$ to $E_T$.\\ 
\textbf{return} $T$.
\end{tabbing}
\end{algorithm}

\subsection{RPM and MHC}
The randomised primal method described by Knowles and Corne works in a similar way to $\delta$-Prim's, with the exception that the new edges added to the tree are not necessarily chosen greedily, but instead they are chosen according to a predefined table of values. Using the parlance of genetic algorithms, such a table is referred to as a \textit{tabular chromosome}. For a graph $G=(V,E)$ with vertices labelled $1, \dots, n$ and a degree bound $\delta$, a tabular chromosome for $G$ will have $n$ rows and $\delta$ columns. For each $i \in [1, n]$, $j \in [1,\delta]$, the entry in row $i$ and column $j$ is assigned an allele value $a(i,j) \in [1,n]$. The allele values indicate which edges the RPM version of $\delta$-Prim's considers in the following way: for each $i \in [1,n]$ let $L_i$ be the list of edges $(i,j) \in E - E_T, j \neq i$ sorted by weight. During each iteration, the algorithm only considers the edges $(i',j')$ to add to the current tree $T$, where $i' \in V_T$, $d_T(i') < \delta$ and $(i',j')$ is the $a(i',d_T(i'))$-th edge in $L_{i'}$. In the case where the $a(i',d_T(i'))$ is greater than $|L_{i'}|$, then the algorithm uses the last edge of $L_{i'}$ instead of the $a(i',d_T(i'))$-th edge. Hence the tabular chromosome provides a methodology for choosing edges during each iteration. We describe RPM in more detail as Algorithm~\ref{alg:rpm}.

\begin{algorithm}[tbh]
\caption{: RPM} \label{alg:rpm}
\begin{tabbing}
  ....\=....\=....\=....\=................... \kill \\ [-2ex]
\textbf{Input:} A labelled graph $G=(V,E)$, degree bound $\delta$, chromosome $a$.\\
Let $V_T := \{v_0\}$ for some $v_0 \in V$.\\
Let $E_T := \{ \}$.\\
Let $T$ denote the graph $(V_T,E_T)$.\\
\textbf{while} $|E_T| < |V| - 1$,\\
\> Let $E' = \{\}$\\
\> \textbf{For each} $i \in V_T$ with $d_T(i) < \delta$,\\
\>\> Let $L_i$ be a sorted list of edges in $\{(i,j):j \notin V_T \}$.\\
\>\> Add the $\min\{a(i,d_T(i)),|L_i|\}$-th edge of $L_i$ to $E'$.\\
\> Find the smallest weight edge $(u,v) \in E'$.\\
\> Add $v$ to $V_T$.\\
\> Add $e$ to $E_T$.\\ 
\textbf{return} $T$.
\end{tabbing}
\end{algorithm}

Note, that if we set all the allele values in the tabular chromosome to one, then Algorithm~\ref{alg:rpm} is equivalent to $\delta$-Prim's. By applying small local changes to a chromosome, a neighbourhood of chromosomes can be produced, where the value of each chromosome is given by the weight of the spanning tree output by the algorithm when given the chromosome as input. This allows for the application of metaheuristics in order to iteratively search chromosome neighbourhoods so that a low weight spanning tree can be found. For our analysis, we chose the multistart hillclimbing (MHC) algorithm described by Knowles and Corne as one such algorithm.\\
A traditional hillclimbing algorithm is a local search technique that starts with an initial solution and then chooses a member of the solution's neighbourhood and evaluates it. If the evaluated solution is better than the current solution, then it becomes the new current solution. This greedy process is repeated until some stopping criteria are met. The MHC algorithm is similar except that the search process can be occasionally restarted from a new solution when no improvements are made after a specified number of evaluations. The algorithm then continues in the same fashion until the stopping criteria are met, with the best solution found amongst all restarts of the algorithm being the final output.\\

Our specific approach to MHC is given by Algorithm~\ref{alg:mhc}, where RPM$(a,G,\delta)$ is the output to Algorithm~\ref{alg:rpm} with chromosome $a$, graph $G$ and degree bound $\delta$ as inputs. The initial chromosomes and neighbouring chromosomes are found using the exponential probability distribution described by \cite{knowles2000new} and we set $m$ and $r$ to be 5000 and 250 respectively. Note that our approach uses half of the number evaluations used by Knowles and Corne. This is because the repeated calls to the RPM algorithm make MHC computationally expensive for a large number of evaluations. We therefore chose a number of iterations that made the running time of MHC comparable to that of the other heuristics we examined.

\begin{algorithm}[tbh]
\caption{: MHC} \label{alg:mhc}
\begin{tabbing}
  ....\=....\=....\=....\=................... \kill \\ [-2ex]
\textbf{Input:} A labelled graph $G=(V,E)$, degree bound $\delta$, evaluation limit $m$, reset number $r$.\\
Let $a$ be initial tabular chromosome.\\
Let $T :=$ RPM$(a,G,\delta)$.\\
Let $T' := T.$\\
Let $m' := 1$.\\
Let $r': = 1$.\\
\textbf{while} $m' \leq m$,\\
\> Let $a' :=$ neighbouring chromosome of $a$.\\
\> Let $T'' := $ RPM$(a',G,\delta)$\\
\> \textbf{if} $w(T'') < w(T')$,\\
\>\> $T' := T''$.\\
\>\> $a := a'$\\
\>\> \textbf{if} $w(T) < w(T')$,\\
\>\>\> $T := T'$.\\
\> \textbf{else}\\
\>\> $r' := r' +1$.\\
\>\> \textbf{if} $r' > r$,\\
\>\>\> Reset $a$ to an initial tabular chromosome.\\
\>\>\>  $T' :=$ RPM$(a,G,\delta)$.\\
\> $m' := m' +1$.\\
\textbf{return} $T$.
\end{tabbing}
\end{algorithm}

\section{Approximation Algorithms using Eulerian and Hamiltonian Supergraphs of an MST} \label{sec:cube2}
In this section we describe three approximation algorithms for the $2$-MST/$2$-MBST that that are not based upon edge swaps. Instead, each of these algorithms starts with an MST of the point set, but adds edges to the tree to create either an Eulerian or Hamiltonian supergraph of the MST. The output of each of these algorithms is a Hamiltonian path of the point set that is obtained by traversing the supergraph.

\subsection{An Approximation Algorithm using Tree Edge Doubling}
A widely known 2-approximation algorithm for the metric TSP is the so-called \textit{double tree} algorithm \cite{rosenkrantz1977analysis}. Given a set of points $P$ in a metric space, the double tree algorithm works by duplicating every edge in an MST $T$ of $P$. This results in a spanning Eulerian multigraph $T_2$, since the degree of every vertex in $T_2$ is even. An Eulerian circuit $C$ can than be found in $T_2$, which can be reduced to a Hamiltonian cycle $C^*$ by \textit{short-cutting}. The short-cutting process naively removes any duplicates of vertices from $C$ to convert the Eulerian circuit into a Hamiltonian cycle $C^*$. We describe a modified version of the double tree algorithm for the 2-MST problem, which we refer to as the DT algorithm, as follows.
\begin{enumerate}
\item Find an MST for $P$, denoted $T$.
\item Duplicate every edge in $T$ to create a new graph $T_2$.
\item Find an Eulerian circuit $C$ in $T_2$.
\item Apply short-cutting to $C$ to create the Hamiltonian cycle $C^*$.
\item Delete the longest edge in $C^*$ to create the Hamiltonian path $P^*$. This is the desired approximation.
\end{enumerate}
By applying the triangle inequality, one can see that the total weight of $C^*$ is no greater than the total weight of $C$, the Eulerian circuit which contains every edge in $T_2$. Hence the total weight of $P^*$ is no more than twice the total weight of the starting MST, hence the DT algorithm is a 2-factor approximation algorithm for the 2-MST problem. Note that the short-cutting process may result in $C^*$ having a longer bottleneck edge than $C$, and so we do not have the same performance guarantee for the the 2-MBST problem.

\subsection{Christofides Algorithm}
Another approximation algorithm which uses an Eulerian supergraph of an MST is Christofides' $\frac{3}{2}$-approximation algorithm for the metric TSP \cite{christofides1976worst}. The algorithm works by starting with an MST $T$ of the point set $P$ as in the double tree algorithm. A supergraph $T_M$ of $T$ is created by finding a minimum weight perfect matching $M$ of the odd degree vertices of $T$ and adding the edges of $M$ to $T$. Note that the degree of every vertex in $T_M$ is even, so $T_M$ is Eulerian. The algorithm then proceeds in the same way as the tree double algorithm.\\
To convert the Hamiltonian cycle version of the algorithm into a Hamiltonian path version, Hoogeveen \cite{hoogeveen1991analysis} provides a modification of Christofides algorithm which maintains the $\frac{3}{2}$ performance guarantee. Let $V_\text{odd}$ be the set of odd vertices of the initial MST $T$. Let $u_1$ and $u_2$ be new dummy vertices, and for each vertex $v \in V_\text{odd}$, join $v$ to $u_1$ and join $v$ to $u_2$ by edges of zero length. Now instead of finding a minimum weight perfect matching for the vertices in $V_\text{odd}$, we instead find a minimum weight perfect matching $M$ of $V_\text{odd} \cup \{u_1,u_2\}$. We then add all edges of $M$ which do not have dummy vertices as endpoints to $T$ to obtain $T_M$. The graph $T_M$ will contain exactly two vertices, $v_1$ and $v_2$, of odd degree and will thus contain an Eulerian trail $P$ between $v_1$ and $v_2$. After applying short-cutting to $P$, we obtain the Hamiltonian path $P^*$ whose total weight is at most $\frac{3}{2}$ times the total weight of the optimal solution to the 2-MST problem. Note that once again we do not have the same performance guarantee for the 2-MBST due to the short-cutting process. We outline the algorithm as follows.
\begin{enumerate}
\item Find an MST for $P$, denoted $T$. Let $V_\text{odd}$ be the set of odd vertices of $T$.
\item Add dummy vertices $u_1$ and $u_2$ and zero length edges $(v,u_1),(v,u_2)$, where $v \in V_\text{odd}$, to $T$.
\item Find a minimum weight perfect matching $M$ of $V_\text{odd} \cup \{u_1,u_2\}$.
\item Add all edges of $M$ that do not have $u_1$ or $u_2$ as endpoints to $T$ to obtain $T_M$.
\item Find an Eulerian trail $P$ in $T_M$.
\item Apply short-cutting to $P$ to create the Hamiltonian path $P^*$. This is the desired approximation.
\end{enumerate}
 
\subsection{An Approximation Algorithm using Graph Cube}    
Finally, we describe an approximation algorithm for the $2$-MST problem that does not use short-cutting on an Eulerian circuit/trail in order to obtain a Hamiltonian path. Developed by the authors in \cite{andersen2016minimum}, this algorithm is based upon the idea that one can find a Hamiltonian path in the cube of a connected graph in polynomial time \cite{karaganis1968cube}. The cube of a graph $G$, denoted $G^3$, is the supergraph of $G$ such that the edge $(u,v)$ is in $G^3$ if and only if there is a path between $u$ and $v$ in $G$ with three or fewer edges. Starting with a point set $P$, the algorithm can be described as follows.
\begin{enumerate}
\item Find an MST for $P$, denoted $T$.
\item Compute $T^3$.
\item Find a Hamiltonian path in $T^3$. This is the desired approximation.
\end{enumerate} 
Assuming that $P$ lies within a metric space, one can apply the triangle inequality to show that the both the total weight and bottleneck length of the approximation is no worse than three times those of the MST for $P$. We refer to this algorithm as Cube2.
\section{Results}
We present the results of our computational experiments by graphing the average performances of the algorithms in terms of total weight and bottleneck lengths. For clarity, the graphs have been separated into the best and worst performers given by the results for the case when $n = 100$. The full tables of results can be found in the appendices, including the average running times of the algorithms in seconds. Note that we treat the running times of the algorithms as secondary considerations and so we do not claim that the implementations of the algorithms we tested are optimal with respect to time efficiency. With the exception of the uniformly random instances for $\delta = 3$, the results are given as averages over 30 test instances for each value of $n \in \{10,20, \dots, 100 \}$ for uniformly random instances and $n \in \{11,20, \dots, 100 \}$ for our special instances. Our tests were performed on a Dell PowerEdge R820 with four Xeon E5-4650 2.7GHz CPU's and 256GB of RAM.\\

\subsection{Degree bound $\delta = 2$}
For $\delta = 2$ it was sufficient to test uniformly random instances, as points of degree three were very common in the MSTs. The results are as follows.

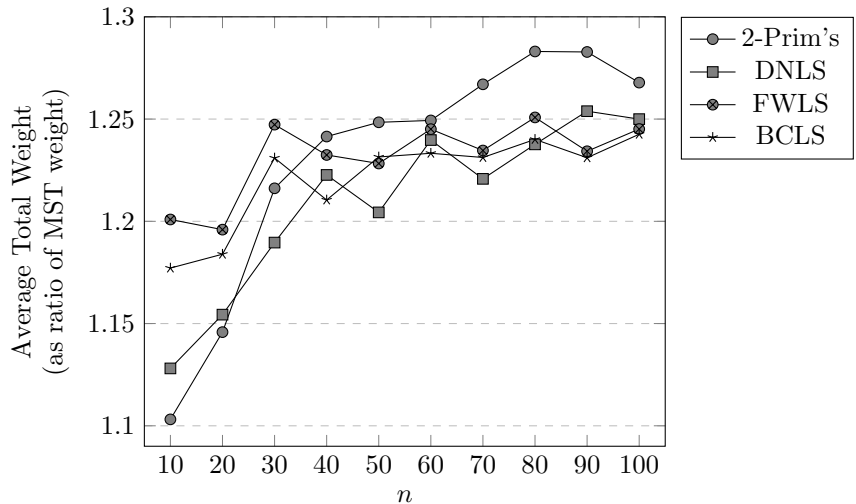
\begin{figure}[H]
\centering
 \scalebox{1}{
\begin{tikzpicture}
\begin{axis}[
    xlabel={$n$ },
    ylabel style={align=center},
    ylabel={Average Total Weight \\ (as ratio of MST weight)},
    xmin=5, xmax=105,
    ymin = 1.09, ymax = 1.3,
    xtick={10,20,30,40,50,60,70,80,90,100},
    legend pos= outer north east,
    ymajorgrids=true,
    grid style=dashed,
    cycle list name=black white,
]

   \addplot
    coordinates {
    (10,1.103136216)
    (20,1.145808295)
    (30,1.216136642)
    (40,1.241452574)
    (50,1.248416707)
    (60,1.2493357  )	    
    (70,1.266994615)
    (80,1.283054238)
    (90,1.282817008)
   (100,1.267828922)
    };
    \addlegendentry{2-Prim's}

   \addplot
  coordinates {
  (10,1.128063252)
  (20,1.154360907)
  (30,1.189599953)
  (40,1.222680411)
  (50,1.204361095)
  (60,1.239743521)	
  (70,1.220761715)
  (80,1.237631645)
  (90,1.253852736)
 (100,1.249976411)
   };
   \addlegendentry{DNLS}

    \addplot
    coordinates {
    (10,1.200895484)
    (20,1.195925228)
    (30,1.247275015)
    (40,1.232444038)
    (50,1.22826818 )
    (60,1.245071635)	    
    (70,1.234623611)
    (80,1.250804207)
    (90,1.234258973)
   (100,1.245117715)
    };
    \addlegendentry{FWLS}

    \addplot 
   coordinates {
    (10, 1.177135332)    
    (20, 1.183915574)
    (30, 1.230965217)
    (40, 1.210432022)
    (50, 1.231464681)
    (60, 1.233271088)
    (70, 1.231219389)
    (80, 1.240063827)
    (90, 1.231079416)
    (100,1.242581335)
    };
    \addlegendentry{BCLS}

\end{axis}
\end{tikzpicture} }
\caption{Best performing algorithms in terms of weight for $\delta =2$.}
\label{plot:2weightbest}
\end{figure}

From Figure~\ref{plot:2weightbest}, the DNLS, FWLS and BCLS algorithms were fairly close in terms of total weight with BCLS being the best performer for $n = 100$. BCLS mostly outperformed FWLS, but DNLS was at times the better performing algorithm for certain point values of $n$. After $n=30$, 2-Prim's was outperformed by the other three algorithms.

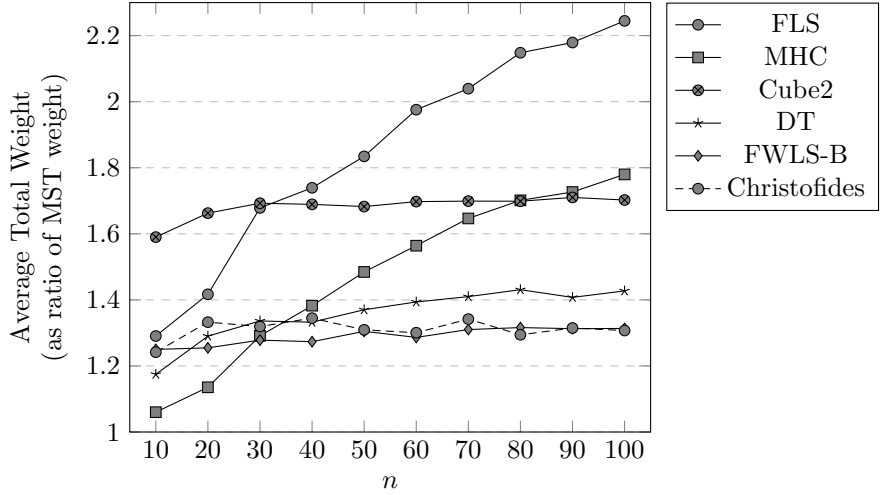
\begin{figure}[H]
\centering
 \scalebox{1}{
\begin{tikzpicture}
\begin{axis}[
    xlabel={$n$ },
    ylabel style={align=center},
    ylabel={Average Total Weight \\ (as ratio of MST weight)},
    xmin=5, xmax=105,
    ymin = 1, ymax = 2.3,
    xtick={10,20,30,40,50,60,70,80,90,100},
    legend pos= outer north east,
    ymajorgrids=true,
    grid style=dashed,
    cycle list name=black white,
]

  \addplot
  coordinates {
  (10,1.290545653)
  (20,1.416849406)
  (30,1.678409656)
  (40,1.739271318)
  (50,1.834458641)
  (60,1.975744749)	
  (70,2.039351806)
  (80,2.148152165)
  (90,2.17909747 )
 (100,2.244635689)
   };
   \addlegendentry{FLS}

  \addplot
  coordinates {
  (10,1.06002926)
  (20,1.13539880)
  (30,1.29101384)
  (40,1.38246171)
  (50,1.48453354)
  (60,1.56405460)	
  (70,1.64651746)
  (80,1.70133133)
  (90,1.72653761)
 (100,1.77990571)
   };
   \addlegendentry{MHC}

    \addplot
  coordinates {
  (10,1.590027516)
  (20,1.662623665)
  (30,1.69258591 )
  (40,1.689127846)
  (50,1.68253506 )
  (60,1.697463358)
  (70,1.69899917 )
  (80,1.698685496)
  (90,1.710100263)
 (100,1.70229584 )
   };
   \addlegendentry{Cube2}
   
   \addplot
    coordinates {
    (10,1.174651589)   
    (20,1.290246711)
    (30,1.336160772)
    (40,1.332468367)
    (50,1.370071284)
    (60,1.393658223)	    
    (70,1.40987065)
    (80,1.430815583)
    (90,1.407524807)
    (100,1.427191401)
    };
   \addlegendentry{DT}

      \addplot
    coordinates {
    (10,1.250057164)
    (20,1.254776677)
    (30,1.278164215)
    (40,1.273458468)
    (50,1.304747225)
    (60,1.286380565)	    
    (70,1.310177906)
    (80,1.31613192 )
    (90,1.312851651)
   (100,1.313508322)
    };
   \addlegendentry{FWLS-B} 
   
      \addplot
    coordinates {
    (10,1.241640247)
    (20,1.332572005)
    (30,1.31936429)
    (40,1.344700259)
    (50,1.309298221)
    (60,1.300637204)	    
    (70,1.341793853)
    (80,1.294376387)
    (90,1.314785928)
    (100,1.307226537)
    };
   \addlegendentry{Christofides}

\end{axis}
\end{tikzpicture} }
\caption{Worst performing algorithms in terms of weight for $\delta =2$.}
\label{plot:2weightworst}
\end{figure}

Figure~\ref{plot:2weightworst} graph seems to indicate that FWLS-B, DT, Christofides and Cube2 performed consistently with Christofides being the best performer in the group. The performances of FLS and MHC seemed to worsen as $n$ increased.

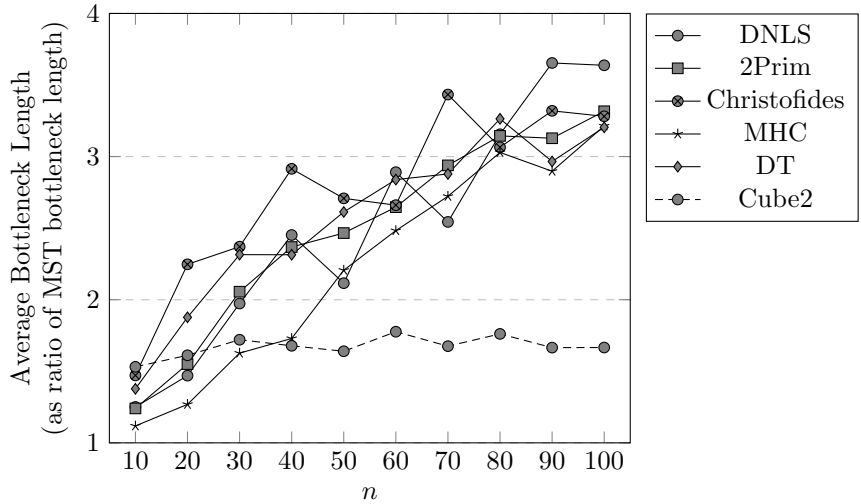
\begin{figure}[H]
\centering
 \scalebox{1}{
\begin{tikzpicture}
\begin{axis}[
    xlabel={$n$ },
    ylabel style={align=center},
    ylabel={Average Bottleneck Length \\ (as ratio of MST bottleneck length)},
    xmin=5, xmax=105,
    ymin = 1, ymax = 4,
    xtick={10,20,30,40,50,60,70,80,90,100},
    legend pos= outer north east,
    ymajorgrids=true,
    grid style=dashed,
    cycle list name=black white,
]

    \addplot
   coordinates {
  (10,1.250723057)
  (20,1.469482677)
  (30,1.973517316)
  (40,2.452015668)
  (50,2.115619313)
  (60,2.891084027)	
  (70,2.543552975)
  (80,3.155932298)
  (90,3.654059288)
 (100,3.637763487)
   };
   \addlegendentry{DNLS}

   \addplot
  coordinates {
  (10,1.240638647)
  (20,1.550225879)
  (30,2.055975523)
  (40,2.368177355)
  (50,2.46570765 )
  (60,2.646605558)	
  (70,2.938607423)
  (80,3.144538758)
  (90,3.128079503)
 (100,3.316617467)
   };
   \addlegendentry{2Prim}
   
      \addplot
  coordinates {
  (10,1.47120836)
  (20,2.247265618)
  (30,2.370524673)
  (40,2.914506954)
  (50,2.708029734)
  (60,2.660731658)	
  (70,3.433385712)
  (80,3.064905746)
  (90,3.319660966)
 (100,3.281106913)
   };
   \addlegendentry{Christofides}

   \addplot
   coordinates {
  (10,1.118243972)
  (20,1.268909434)
  (30,1.627005334)
  (40,1.729318829)
  (50,2.206153269)
  (60,2.483025954)	
  (70,2.722524642)
  (80,3.028869477)
  (90,2.897880614)
 (100,3.213801372)
   };
   \addlegendentry{MHC}

      \addplot
   coordinates {
  (10,1.377362878)
  (20,1.877185325)
  (30,2.314906788)
  (40,2.314156905)
  (50,2.613049351)
  (60,2.839172168)	
  (70,2.878672141)
  (80,3.263605046)
  (90,2.965792699)
 (100,3.204541688)
   };
   \addlegendentry{DT} 
           
     \addplot
   coordinates {
  (10,1.531207504)
  (20,1.611814041)
  (30,1.720427045)
  (40,1.678130833)
  (50,1.639985793)
  (60,1.77645911 )	
  (70,1.675516894)
  (80,1.761212759)
  (90,1.665369427)
 (100,1.665959224)
   };
   \addlegendentry{Cube2}

\end{axis}
\end{tikzpicture} }
\caption{Best performing algorithms in terms of bottleneck for $\delta =2$.}
\label{plot:2bottlebest}
\end{figure}

Figure~\ref{plot:2bottlebest} shows that Cube2 is the clear winner in terms of bottleneck length with a fairly consistent performance. The other algorithms seem to worsen in performance as $n$ increases, with DNLS and Christofides being the most variable in performance. The DT algorithm had the second best overall performance.

\begin{figure}[H]
\centering
 \scalebox{1}{
\begin{tikzpicture}
\begin{axis}[
    xlabel={$n$ },
    ylabel style={align=center},
    ylabel={Average Bottleneck Length \\ (as ratio of MST bottleneck length)},
    xmin=5, xmax=105,
    ymin = 1, ymax = 6.5,
    xtick={10,20,30,40,50,60,70,80,90,100},
    legend pos= outer north east,
    ymajorgrids=true,
    grid style=dashed,
    cycle list name=black white,
]
\addplot
   coordinates {
  (10,1.570034974)
  (20,2.449394184)
  (30,3.595880282)
  (40,3.867092923)
  (50,4.45120084 )
  (60,4.744881194)	
  (70,5.386882158)
  (80,6.05786724 )
  (90,6.058963797)
 (100,6.287427989)
   };
   \addlegendentry{FLS}

\addplot
   coordinates {
  (10,1.454946249)
  (20,1.928977569)
  (30,2.409056988)
  (40,2.693849405)
  (50,3.303807322)
  (60,3.055199074)	
  (70,3.498621377)
  (80,3.990921276)
  (90,3.793502831)
 (100,4.454779418)
   };
   \addlegendentry{FWLS-B}

	\addplot
   coordinates {
  (10, 1.429933772)
  (20, 1.729312097)
  (30, 2.618405494)
  (40, 2.516967476)
  (50, 3.091270985)
  (60, 3.408586501)	
  (70, 3.406421241)
  (80, 3.741566497)
  (90, 3.631529794)
  (100,4.124813249)
   };
   \addlegendentry{BCLS}

      \addplot
   coordinates {
  (10,1.426143625)
  (20,1.736138271)
  (30,2.6696669  )
  (40,2.674466563)
  (50,2.928816642)
  (60,3.599618729)	
  (70,3.418818329)
  (80,3.910220839)
  (90,3.691747709)
 (100,4.092553816)
   };
   \addlegendentry{FWLS}

\end{axis}
\end{tikzpicture} }
\caption{Worst performing algorithms in terms of bottleneck for $\delta =2$.}
\label{plot:2bottleworst}
\end{figure}
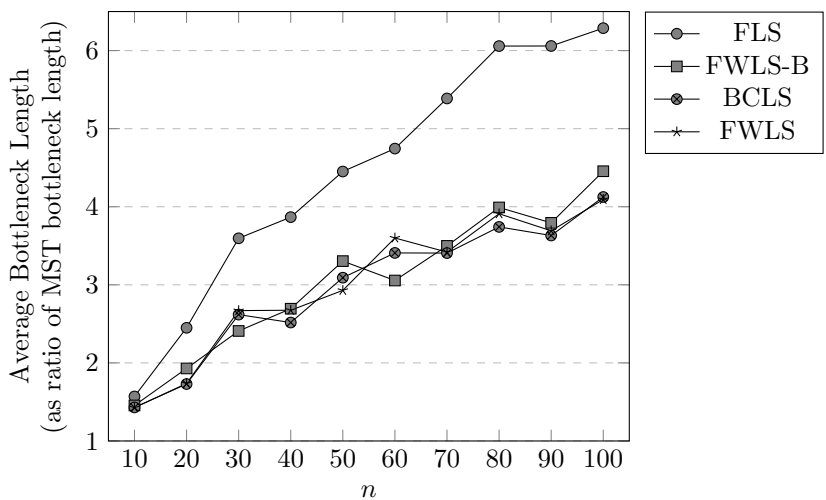

FLS was clearly the worst performing algorithm in Figure~\ref{plot:2bottleworst} with FWLS-B and BCLS being relatively close in performance in comparison.

\subsection{Degree bound $\delta = 3$: Uniform Instances}
For $\delta = 3$, we tested the algorithms on both the uniformly random instances and the special instances. In this section we present the results for the uniform instances with the results for the special instances in the next section. The uniform instances tested here were found by searching through a set of $100$ instances for each $n \in \{10,20, \dots, 100\}$ and selecting and testing only those instances whose MSTs contained points of degree 4 or 5. As such, the number of instances tested for each $n$ varied, with the number of instances increasing as $n$ increased. We give the number of instances tested for each $n$ in Table~\ref{tab:deg3tested}.

\begin{table}[]
\centering
\caption{Number of uniform instances tested for $\delta = 3$.}
\label{tab:deg3tested}
\begin{tabular}{|c|c|}
\hline
$n$ & Instances Tested \\ \hline
10  & 2                \\
20  & 17               \\
30  & 14               \\
40  & 20               \\
50  & 30               \\
60  & 32               \\
70  & 40               \\
80  & 50               \\
90  & 43               \\
100 & 59               \\ \hline
\end{tabular}
\end{table}

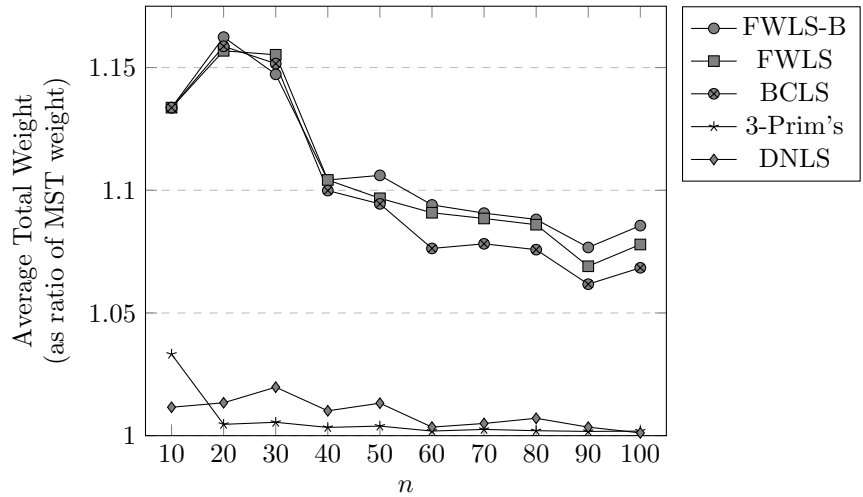
\begin{figure}[H]
\centering
 \scalebox{1}{
\begin{tikzpicture}
\begin{axis}[
    xlabel={$n$ },
    ylabel style={align=center},
    ylabel={Average Total Weight \\ (as ratio of MST weight)},
    xmin=5, xmax=105,
    ymin = 1.0, ymax = 1.175,
    xtick={10,20,30,40,50,60,70,80,90,100},
    legend pos= outer north east,
    ymajorgrids=true,
    grid style=dashed,
    cycle list name=black white,
]

\addplot
    coordinates {
    (10,1.133666163)(20,1.162420266)(30,1.147251766)(40,1.104192824)(50,1.106076681)(60,1.094033616)	    (70,1.090677495)(80,1.08811487)(90,1.076697782)(100,1.085596235)
    };
   \addlegendentry{FWLS-B}

\addplot
    coordinates {
    (10,1.133666163)(20,1.156835376)(30,1.155216049)(40,1.104192824)(50,1.096729247)(60,1.090837457)	    (70,1.088507985)(80,1.08597511)(90,1.069029623)(100,1.077903605)
    };
   \addlegendentry{FWLS}

\addplot
    coordinates {
    (10,1.133666163)(20,1.158672596)(30,1.151613236)(40,1.0998669)(50,1.094448293)(60,1.076294605)	    (70,1.078178698)(80,1.075800606)(90,1.061725816)(100,1.068401931)
    };
   \addlegendentry{BCLS}

\addplot
    coordinates {
    (10,1.033196935)(20,1.004613999)(30,1.005448773)(40,1.003356998)(50,1.003895446)(60,1.001871469)	    (70,1.002546848)(80,1.002017474)(90,1.001722011)(100,1.001858757)
    };
   \addlegendentry{3-Prim's}

\addplot
    coordinates {
    (10,1.011574013)(20,1.013391128)(30,1.01975616)(40,1.010118555)(50,1.013232913)(60,1.003482659)	    (70,1.004965557)(80,1.007085437)(90,1.003468788)(100,1.001116214)
    };
   \addlegendentry{DNLS}
   
\end{axis}
\end{tikzpicture} }
\caption{Best performing algorithms in terms of weight for $\delta =3$ (uniform instances).}
\label{plot:3weightbestU}
\end{figure}

From Figure~\ref{plot:3weightbestU}, we see that FWLS-B, FWLS and BCLS performed similarly, with BCLS slightly outperforming the other two. DNLS and 3-Prim's also had close performances with 3-Prim's slightly outperforming DNLS for most values of $n$ but not for $n = 100$. Both DNLS and 3-Prim's clearly outperformed FWLS-B, FWLS and BCLS.

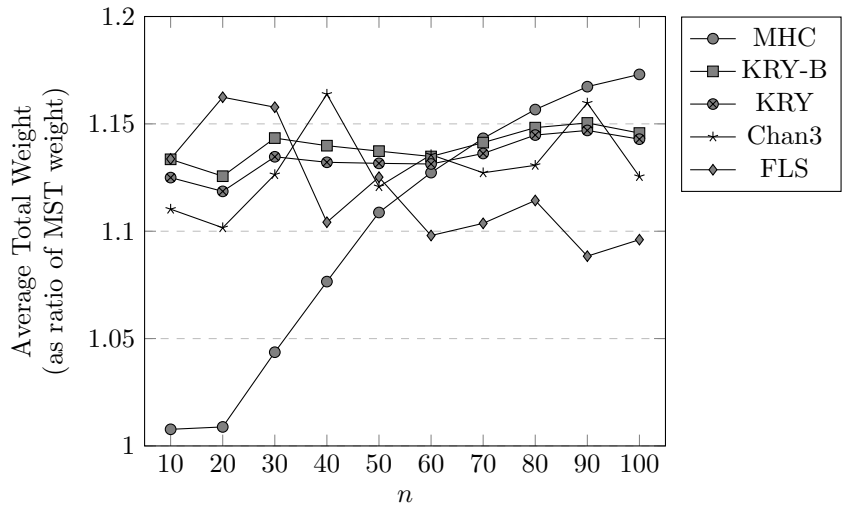
\begin{figure}[H]
\centering
 \scalebox{1}{
\begin{tikzpicture}
\begin{axis}[
    xlabel={$n$ },
    ylabel style={align=center},
    ylabel={Average Total Weight \\ (as ratio of MST weight)},
    xmin=5, xmax=105,
    ymin = 1.0, ymax = 1.2,
    xtick={10,20,30,40,50,60,70,80,90,100},
    legend pos= outer north east,
    ymajorgrids=true,
    grid style=dashed,
    cycle list name=black white,
]

\addplot
    coordinates {
    (10,1.007740287)(20,1.008850914)(30,1.043648819)(40,1.076537573)(50,1.108731871)(60,1.127290165)	    (70,1.143269558)(80,1.156662059)(90,1.167324473)(100,1.173030484)
    };
   \addlegendentry{MHC}

\addplot
    coordinates {
    (10,1.13349401)(20,1.125624511)(30,1.143379715)(40,1.139840024)(50,1.137305831)(60,1.134805891)	    (70,1.141278166)(80,1.148249352)(90,1.150426448)(100,1.145723424)
    };
   \addlegendentry{KRY-B}

\addplot
    coordinates {
    (10,1.124974618)(20,1.118559464)(30,1.134673828)(40,1.132105768)(50,1.131609118)(60,1.131313399)	    (70,1.136200779)(80,1.144781301)(90,1.146943163)(100,1.142867684)
    };
   \addlegendentry{KRY}
   
\addplot
    coordinates {
    (10,1.110268736)(20,1.101626264)(30,1.126369978)(40,1.163809584)(50,1.12076866)(60,1.135618803)	    (70,1.127215469)(80,1.130687713)(90,1.159687949)(100,1.125483054)
    };
   \addlegendentry{Chan3}

\addplot
    coordinates {
    (10,1.133666163)(20,1.162420266)(30,1.157759154)(40,1.104192824)(50,1.125182701)(60,1.097990244)	    (70,1.103669196)(80,1.11431801)(90,1.088383497)(100,1.096069573)
    };
   \addlegendentry{FLS}

\end{axis}
\end{tikzpicture} }
\caption{Worst performing algorithms in terms of weight for $\delta =3$ (uniform instances).}
\label{plot:3weightworstU}
\end{figure}

It can bee seen from Figure~\ref{plot:3weightworstU} that MHC performs well initially but performs increasingly worse as $n$ increases. The other algorithms perform reasonably consistently with no clear stand outs other than surprisingly FLS which performed the best in the group.  

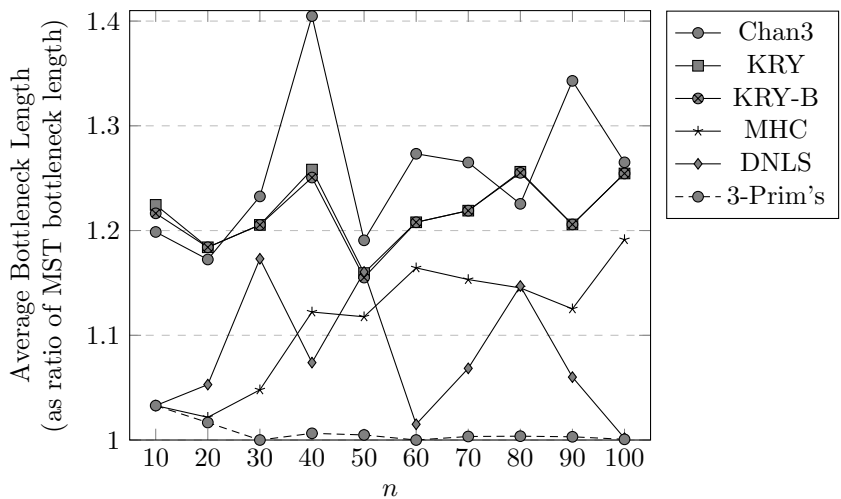
\begin{figure}[H]
\centering
 \scalebox{1}{
\begin{tikzpicture}
\begin{axis}[
    xlabel={$n$ },
    ylabel style={align=center},
    ylabel={Average Bottleneck Length \\ (as ratio of MST bottleneck length)},
    xmin=5, xmax=105,
    ymin = 1, ymax = 1.41,
    xtick={10,20,30,40,50,60,70,80,90,100},
    legend pos= outer north east,
    ymajorgrids=true,
    grid style=dashed,
    cycle list name=black white,
]

\addplot
    coordinates {
    (10,1.198657893)(20,1.172368855)(30,1.232581453)(40,1.404775468)(50,1.190656361)(60,1.273300027)(70,1.264980729)(80,1.225485834)(90,1.342905272)(100,1.265111663)
    };
   \addlegendentry{Chan3}

\addplot
    coordinates {
    (10,1.224502149)(20,1.184016768)(30,1.205385456)(40,1.258201615)(50,1.159402844)(60,1.207891052)	    (70,1.218982879)(80,1.256128714)(90,1.205821048)(100,1.25449931)
    };
   \addlegendentry{KRY}
   
\addplot
    coordinates {
    (10,1.216425439)(20,1.184016768)(30,1.205385456)(40,1.250609732)(50,1.155080821)(60,1.207891052)	    (70,1.218982879)(80,1.255099872)(90,1.205821048)(100,1.25449931)
    };
   \addlegendentry{KRY-B}

\addplot
    coordinates {
    (10,1.032868381)(20,1.021693293)(30,1.04789379)(40,1.122232542)(50,1.117655433)(60,1.164369228)	    (70,1.153210639)(80,1.145231163)(90,1.125231801)(100,1.191324024)
    };
   \addlegendentry{MHC}

\addplot
    coordinates {
    (10,1.032868381)(20,1.052909473)(30,1.173034848)(40,1.073890499)(50,1.160463044)(60,1.015021767)	    (70,1.068433877)(80,1.147053516)(90,1.060062059)(100,1.001748039)
    };
   \addlegendentry{DNLS}

\addplot
    coordinates {
    (10,1.032868381)(20,1.016695196)(30,1)(40,1.006387114)(50,1.004847176)(60,1)(70,1.003371165)(80,1.003661523)(90,1.00308493)(100,1.00065275)
    };
   \addlegendentry{3-Prim's}

\end{axis}
\end{tikzpicture} }
\caption{Best performing algorithms in terms of bottleneck for $\delta =3$ (uniform instances).}
\label{plot:3bottlebestU}
\end{figure}

The 3-Prim's algorithm is the clear winner for bottleneck in Figure~\ref{plot:3bottlebestU}, achieving an average value consistently close to that of the MST. DNLS also had a value close to the MST for $n = 100$, but with very variable performances for the other values of $n$. MHC was the third best performer with KRY and KRY-B each having very close performances.

\begin{figure}[H]
\centering
 \scalebox{1}{
\begin{tikzpicture}
\begin{axis}[
    xlabel={$n$ },
    ylabel style={align=center},
    ylabel={Average Bottleneck Length \\ (as ratio of MST bottleneck length)},
    xmin=5, xmax=105,
    ymin = 1.15, ymax = 3.75,
    xtick={10,20,30,40,50,60,70,80,90,100},
    legend pos= outer north east,
    ymajorgrids=true,
    grid style=dashed,
    cycle list name=black white,
]

\addplot
    coordinates {
    (10,1.367606377)(20,2.023293624)(30,2.41211837)(40,2.326619545)(50,2.601584525)(60,2.806823151)(70,3.069875381)(80,3.421197093)(90,2.933296474)(100,3.721612057)
    };
   \addlegendentry{FLS}

\addplot
    coordinates {
    (10,1.367606377)(20,1.992467433)(30,2.41211837)(40,2.326619545)(50,2.462820353)(60,2.751268429)(70,2.946966431)(80,3.222148265)(90,2.790156595)(100,3.540197311)
    };
   \addlegendentry{FWLS}

\addplot
    coordinates {
    (10,1.367606377)(20,2.023293624)(30,2.268206669)(40,2.326619545)(50,2.534049703)(60,2.806823151)(70,2.942646461)(80,3.195833073)(90,2.900086566)(100,3.457233186)
    };
   \addlegendentry{FWLS-B}

\addplot
    coordinates {
    (10,1.367606377)(20,2.01126031)(30,2.376640525)(40,2.29235461)(50,2.441860328)(60,2.415116943)(70,2.676869005)(80,2.962554635)(90,2.540796583)(100,3.194751479)
    };
   \addlegendentry{BCLS}

\end{axis}
\end{tikzpicture} }
\caption{Worst performing algorithms in terms of bottleneck for $\delta =3$ (uniform instances).}
\label{plot:3bottleworstU}
\end{figure}
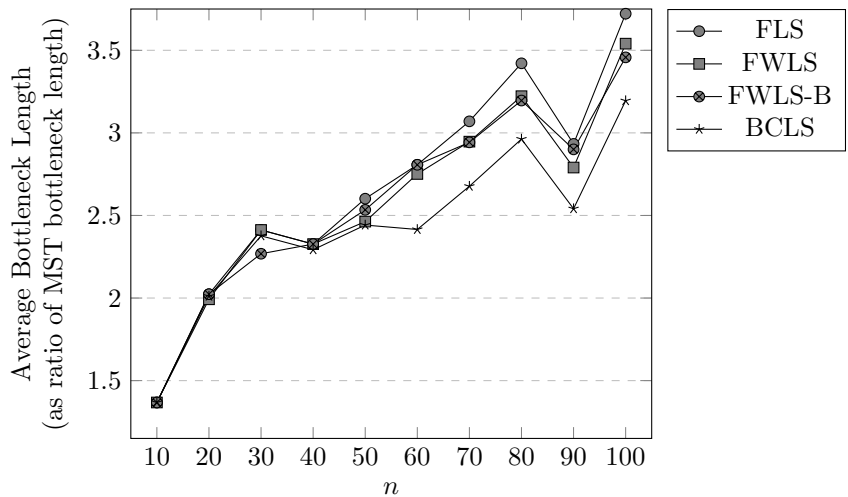

All algorithms in Figure~\ref{plot:3bottleworstU} perform similarly with BCLS being the best performer in the group.

\subsection{Degree bound $\delta = 3$: Special Instances}

\begin{figure}[H]
\centering
 \scalebox{1}{
\begin{tikzpicture}
\begin{axis}[
    xlabel={$n$ },
    ylabel style={align=center},
    ylabel={Average Total Weight \\ (as ratio of MST weight)},
    xmin=5, xmax=105,
    ymin = 1.0, ymax = 1.15,
    xtick={11,20,30,40,50,60,70,80,90,100},
    legend pos= outer north east,
    ymajorgrids=true,
    grid style=dashed,
    cycle list name=black white,
]

\addplot
    coordinates {
    (11,1.371743525)
    (20,1.163806221)
    (30,1.148054857)
    (40,1.146766749)
    (50,1.1252244  )
    (60,1.1237084  )	    
    (70,1.105669927)
    (80,1.090280619)
    (90,1.106975558)
   (100,1.09947391 )
    };
   \addlegendentry{FWLS}

   \addplot
    coordinates {
    (11,1.340134144)
    (20,1.14311223 )
    (30,1.135093514)
    (40,1.139422331)
    (50,1.112446271)
    (60,1.112318087)	    
    (70,1.094958677)
    (80,1.079622115)
    (90,1.091010332)
   (100,1.100046358)
    };
   \addlegendentry{BCLS}

\addplot
    coordinates {
    (11,1.04649731 )
    (20,1.084215826)
    (30,1.088843385)
    (40,1.088470457)
    (50,1.079299045)
    (60,1.088203285)	    
    (70,1.090524647)
    (80,1.090173509)
    (90,1.089649975)
   (100,1.096220352)
    };
   \addlegendentry{KRY}
   
\addplot
    coordinates {
    (11,1.04649731 )
    (20,1.083956353)
    (30,1.08785497 )
    (40,1.087103504)
    (50,1.080116405)
    (60,1.082403789)	    
    (70,1.088318045)
    (80,1.085586957)
    (90,1.083653188)
   (100,1.093852725)
    };
   \addlegendentry{Chan3}

\addplot
    coordinates {
    (11,1.010523511)
    (20,1.005756544)
    (30,1.007310761)
    (40,1.00507467 )
    (50,1.004904103)
    (60,1.005542748)	    
    (70,1.004347838)
    (80,1.005023718)
    (90,1.005830473)
   (100,1.005816999)
    };
   \addlegendentry{3-Prim's}

\addplot
    coordinates {
    (11,1.04465486 )
    (20,1.012455978)
    (30,1.009866474)
    (40,1.003246888)
    (50,1.003444933)
    (60,1.007416822)	    
    (70,1.004298462)
    (80,1.008635373)
    (90,1.002467316)
   (100,1.002681619)
    };
   \addlegendentry{DNLS}

\end{axis}
\end{tikzpicture} }
\caption{Best performing algorithms in terms of weight for $\delta =3$ (special instances).}
\label{plot:3weightbestS}
\end{figure}
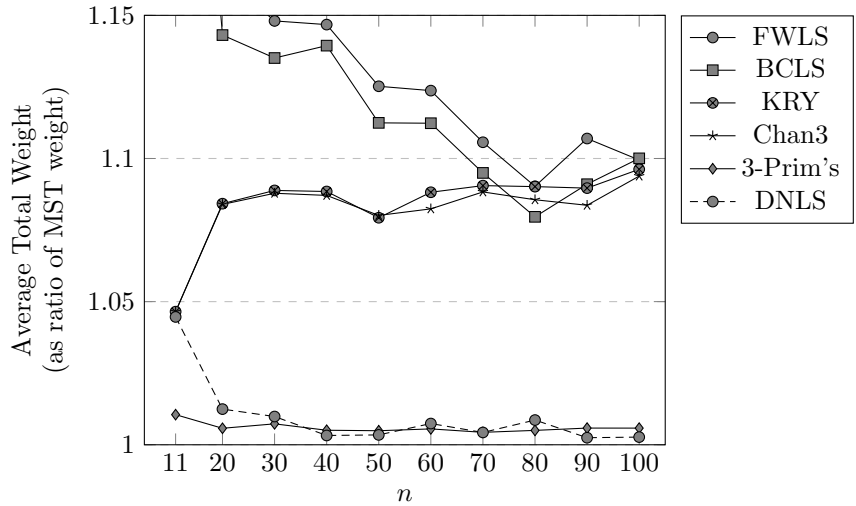

DNLS and 3-Prim's are the clear winners again for the special instances in Figure~ \ref{plot:3weightbestS}, with DNLS performing slightly better in the end. The Chan3 and and KRY algorithms had fairly consistent performances with the performances of BCLS and FWLS improving as $n$ increased. 

\begin{figure}[H]
\centering
 \scalebox{1}{
\begin{tikzpicture}
\begin{axis}[
    xlabel={$n$ },
    ylabel style={align=center},
    ylabel={Average Total Weight \\ (as ratio of MST weight)},
    xmin=5, xmax=105,
    ymin = 1, ymax =3,
    xtick={11,20,30,40,50,60,70,80,90,100},
    legend pos= outer north east,
    ymajorgrids=true,
    grid style=dashed,
    cycle list name=black white,
]

\addplot
    coordinates {
    (11,2.079468173)
    (20,1.743552849)
    (30,1.716426365)
    (40,2.169106484)
    (50,2.106835128)
    (60,2.542318576)	    
    (70,2.529462879)
    (80,2.657653525)
    (90,2.716034989)
   (100,2.971708413)
    };
   \addlegendentry{FLS}

\addplot
    coordinates {
    (11,1.610202299)
    (20,1.44676837 )
    (30,1.359735588)
    (40,1.537765143)
    (50,1.426769851)
    (60,1.522604622)	    
    (70,1.462874116)
    (80,1.541124698)
    (90,1.485444172)
   (100,1.541890823)
    };
   \addlegendentry{FWLS-B}
   
   \addplot
    coordinates {
    (11,1.01397532 )
    (20,1.003594649)
    (30,1.014490407)
    (40,1.025727878)
    (50,1.048783391)
    (60,1.058678802)	    
    (70,1.085991008)
    (80,1.085502112)
    (90,1.101862526)
   (100,1.124513022)
    };
   \addlegendentry{MHC}

\addplot
    coordinates {
    (11,1.06574725 )
    (20,1.101468654)
    (30,1.101006785)
    (40,1.101026363)
    (50,1.091120568)
    (60,1.10400857 )	    
    (70,1.105656588)
    (80,1.102476424)
    (90,1.103175205)
   (100,1.119488109)
    };
   \addlegendentry{KRY-B}

\end{axis}
\end{tikzpicture} }
\caption{Worst performing algorithms in terms of weight for $\delta =3$ (special instances).}
\label{plot:3weightworstS}
\end{figure}
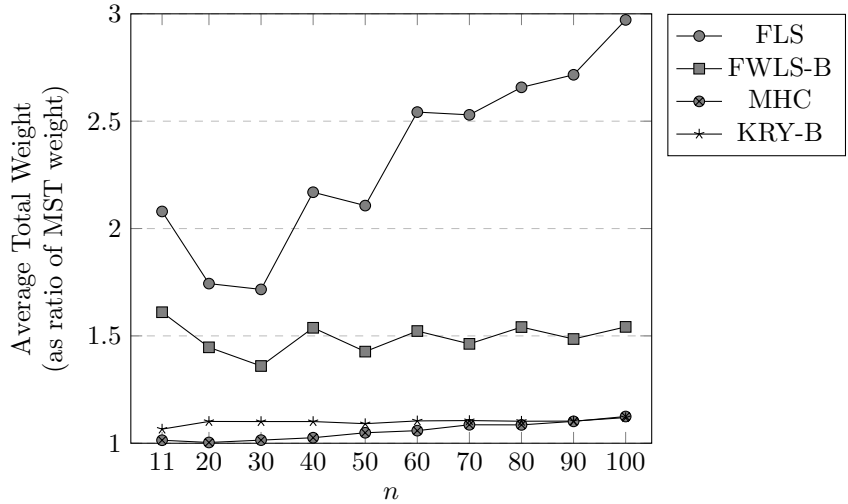

The KRY-B and MHC algorithms had fairly close performances in Figure~\ref{plot:3weightworstS} and FWLS-B performed reasonably consistently.

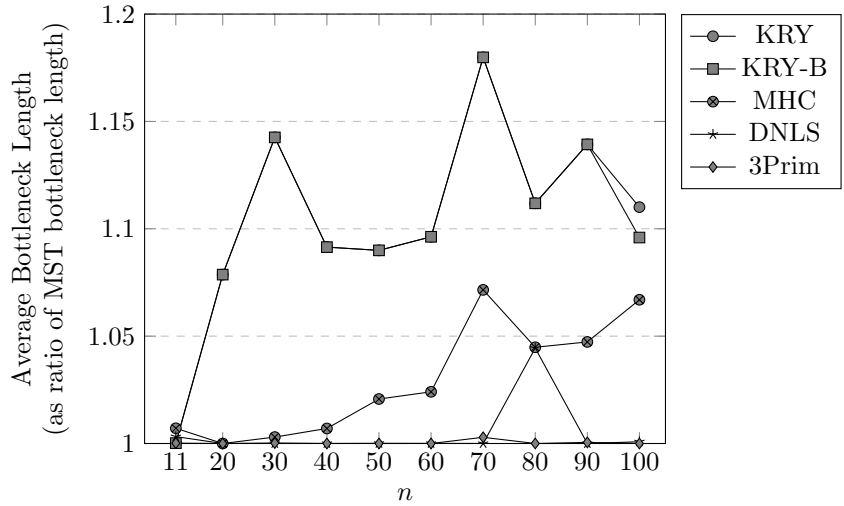
\begin{figure}[H]
\centering
 \scalebox{1}{
\begin{tikzpicture}
\begin{axis}[
    xlabel={$n$ },
    ylabel style={align=center},
    ylabel={Average Bottleneck Length \\ (as ratio of MST bottleneck length)},
    xmin=5, xmax=105,
    ymin = 1.0, ymax = 1.2,
    xtick={11,20,30,40,50,60,70,80,90,100},
    legend pos= outer north east,
    ymajorgrids=true,
    grid style=dashed,
    cycle list name=black white,
]

\addplot
    coordinates {
    (11,1.000172983)
    (20,1.078662067)
    (30,1.14257574 )
    (40,1.091461908)
    (50,1.089952798)
    (60,1.096274277)
    (70,1.179848446)
    (80,1.111860913)
    (90,1.139303836)
   (100,1.110076698)
    };
   \addlegendentry{KRY}

\addplot
    coordinates {
    (11,1.000172983)
    (20,1.078662067)
    (30,1.14257574 )
    (40,1.091461908)
    (50,1.089952798)
    (60,1.096274277)
    (70,1.179848446)
    (80,1.111860913)
    (90,1.139303836)
   (100,1.095929699)
    };
   \addlegendentry{KRY-B}

\addplot
    coordinates {
    (11,1.007088868)
    (20,1          )
    (30,1.00299782 )
    (40,1.006996827)
    (50,1.020717779)
    (60,1.024072258)
    (70,1.07154805 )
    (80,1.044824068)
    (90,1.047301237)
   (100,1.066942399)
    };
   \addlegendentry{MHC}

\addplot
    coordinates {
    (11,1.003205755)
    (20,1          )
    (30,1.000033575)
    (40,1.000022255)
    (50,1.000087181)
    (60,1.000047875)
    (70,1.000026067)
    (80,1.044410819)
    (90,1          )
   (100,1.000813439)
    };
   \addlegendentry{DNLS}

\addplot
    coordinates {
    (11,1.000172983)
    (20,1          )
    (30,1.000200501)
    (40,1          )
    (50,1          )
    (60,1          )
    (70,1.002888971)
    (80,1          )
    (90,1.000521998)
   (100,1          )
    };
   \addlegendentry{3Prim}

\end{axis}
\end{tikzpicture} }
\caption{Best performing algorithms in terms of bottleneck for $\delta =3$ (special instances).}
\label{plot:3bottlebestS}
\end{figure}
DNLS and 3-Prim's once again outperformed the others in terms of bottleneck as seen Figure~\ref{plot:3bottlebestS} with DNLS again being the variable of the two algorithms. MHC was the third best performer with KRY and KRY-B once again performing similarly for bottleneck.

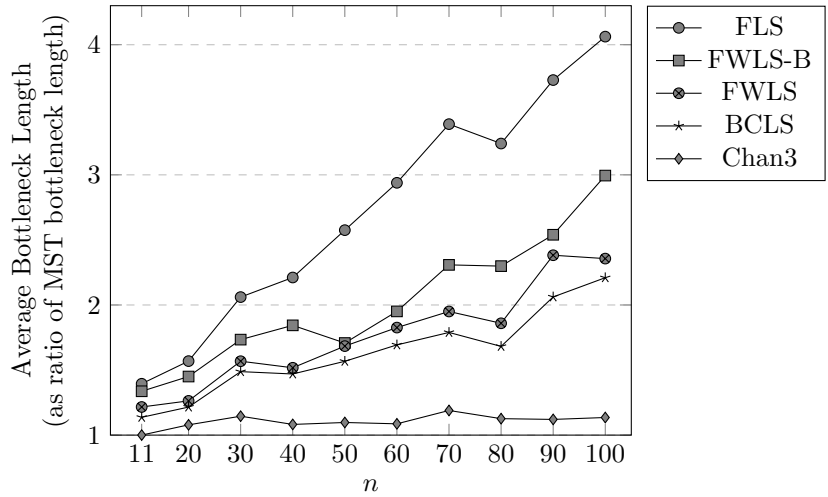
\begin{figure}[H]
\centering
 \scalebox{1}{
\begin{tikzpicture}
\begin{axis}[
    xlabel={$n$ },
    ylabel style={align=center},
    ylabel={Average Bottleneck Length \\ (as ratio of MST bottleneck length)},
    xmin=5, xmax=105,
    ymin = 1.0, ymax = 4.3,
    xtick={11,20,30,40,50,60,70,80,90,100},
    legend pos= outer north east,
    ymajorgrids=true,
    grid style=dashed,
    cycle list name=black white,
]

\addplot
    coordinates {
    (11,1.394670552)
    (20,1.568640649)
    (30,2.060637895)
    (40,2.211186699)
    (50,2.574989386)
    (60,2.938833372)
    (70,3.389726952)
    (80,3.240626359)
    (90,3.728420848)
   (100,4.062201581)
    };
   \addlegendentry{FLS}

\addplot
    coordinates {
    (11,1.336909979)
    (20,1.450174501)
    (30,1.733787632)
    (40,1.843535746)
    (50,1.707678709)
    (60,1.950771401)
    (70,2.308217135)
    (80,2.29866636 )
    (90,2.539588608)
   (100,2.994441896)
    };
   \addlegendentry{FWLS-B}

\addplot
    coordinates {
    (11,1.216139958)
    (20,1.263137542)
    (30,1.567856137)
    (40,1.517416595)
    (50,1.683869649)
    (60,1.826287031)
    (70,1.949976432)
    (80,1.85934523 )
    (90,2.382281066)
   (100,2.356436937)
    };
   \addlegendentry{FWLS}

\addplot
    coordinates {
    (11,1.136645405)
    (20,1.215436172)
    (30,1.487803939)
    (40,1.469689043)
    (50,1.567219255)
    (60,1.693182575)
    (70,1.788950266)
    (80,1.681718886)
    (90,2.061886402)
   (100,2.209515517)
    };
   \addlegendentry{BCLS}

\addplot
    coordinates {
    (11,1.000172983)
    (20,1.078888285)
    (30,1.145326072)
    (40,1.082072298)
    (50,1.096990973)
    (60,1.086730565)
    (70,1.189338483)
    (80,1.126450247)
    (90,1.120592472)
   (100,1.135301733)
    };
   \addlegendentry{Chan3}

\end{axis}
\end{tikzpicture} }
\caption{Worst performing algorithms in terms of bottleneck for $\delta =3$ (special instances).}
\label{plot:3bottleworstS}
\end{figure}

The Chan3 algorithm was the clear best performer in Figure~\ref{plot:3bottleworstS} with a fairly consistent performance whilst the other algorithms seemed to worsen in performance as $n$ increased.

\subsection{Degree bound $\delta = 4$}
Since points of degree $5$ or more are very rare in MST's of uniformly random point sets in the plane, for $\delta = 4$ , we only tested our special instances. The results are shown below.

\begin{figure}[H]
\centering
 \scalebox{1}{
\begin{tikzpicture}
\begin{axis}[
    xlabel={$n$ },
    ylabel style={align=center},
    ylabel={Average Total Weight \\ (as ratio of MST weight)},
    xmin=5, xmax=105,
    ymin = 1.0, ymax = 1.005,
    xtick={11,20,30,40,50,60,70,80,90,100},
    ytick={1,1.001,1.002,1.003,1.004,1.005},
    yticklabels={1,1.001,1.002,1.003,1.004,1.005},
    legend pos= outer north east,
    ymajorgrids=true,
    grid style=dashed,
    cycle list name=black white,
]

\addplot
    coordinates {
    (11,1.003320981)
    (20,1.00171533 )
    (30,1.001127108)
    (40,1.001162337)
    (50,1.001151991)
    (60,1.000810593)	    
    (70,1.000771597)
    (80,1.001060721)
    (90,1.000800901)
   (100,1.001043638)
    };
   \addlegendentry{4-Prim's}

\addplot
    coordinates {
    (11,1.001724857)
    (20,1.000850476)
    (30,1.000603451)
    (40,1.000632267)
    (50,1.000593298)
    (60,1.000565073)	    
    (70,1.000748352)
    (80,1.000651762)
    (90,1.000460966)
   (100,1.000532953)
    };
   \addlegendentry{Chan4}

\addplot
    coordinates {
    (11,1.023085848)
    (20,1.001337108)
    (30,1.000492658)
    (40,1.000348202)
    (50,1.00024502 )
    (60,1.000248151)	    
    (70,1.000209202)
    (80,1.000197296)
    (90,1.000170412)
   (100,1.000226339)
    };
   \addlegendentry{DNLS}

\end{axis}
\end{tikzpicture} }
\caption{Best performing algorithms in terms of weight for $\delta =4$.}
\label{plot:4weightbest}
\end{figure}
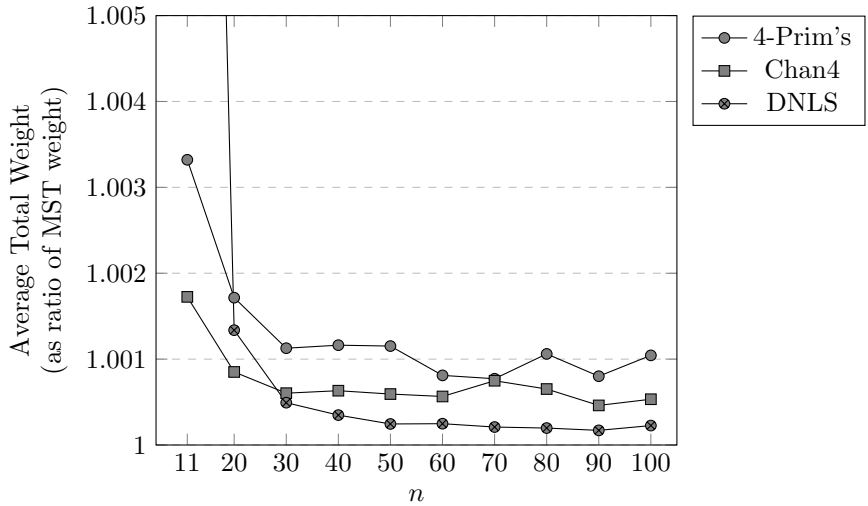

DNLS was the winner in Figure~\ref{plot:4weightbest}, followed by Chan4 (after $n = 70$) and 4-Prim's, although all algorithms gave trees of similar weights to MSTs.

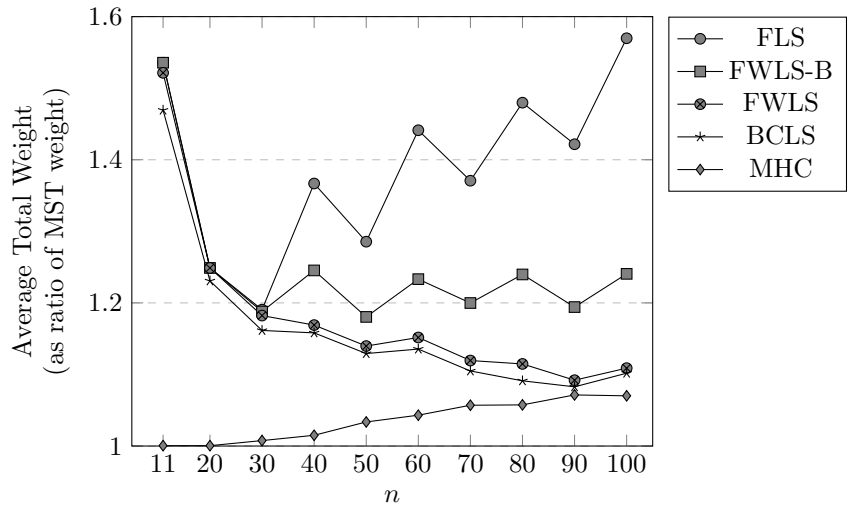
\begin{figure}[H]
\centering
 \scalebox{1}{
\begin{tikzpicture}
\begin{axis}[
    xlabel={$n$ },
    ylabel style={align=center},
    ylabel={Average Total Weight \\ (as ratio of MST weight)},
    xmin=5, xmax=105,
    ymin = 1.0, ymax = 1.6,
    xtick={11,20,30,40,50,60,70,80,90,100},
    legend pos= outer north east,
    ymajorgrids=true,
    grid style=dashed,
    cycle list name=black white,
]

\addplot
    coordinates {
    (11,1.535607184)
    (20,1.248826476)
    (30,1.190572567)
    (40,1.366826328)
    (50,1.285540226)
    (60,1.441242823)	    
    (70,1.370725699)
    (80,1.479761614)
    (90,1.421670967)
   (100,1.569777856)
    };
   \addlegendentry{FLS}
   
\addplot
    coordinates {
    (11,1.535607184)
    (20,1.248826476)
    (30,1.187928679)
    (40,1.245384328)
    (50,1.180407154)
    (60,1.233191475)	    
    (70,1.1998432  )
    (80,1.239749176)
    (90,1.19419264 )
   (100,1.240587058)
    };
   \addlegendentry{FWLS-B}

\addplot
    coordinates {
    (11,1.521442279)
    (20,1.248826476)
    (30,1.182283379)
    (40,1.169002512)
    (50,1.139674804)
    (60,1.151673802)	    
    (70,1.119468032)
    (80,1.114722248)
    (90,1.091995578)
   (100,1.108820555)
    };
   \addlegendentry{FWLS}

\addplot
    coordinates {
    (11,1.469297116)
    (20,1.230726404)
    (30,1.161498754)
    (40,1.158107722)
    (50,1.129246303)
    (60,1.135463353)	    
    (70,1.104893416)
    (80,1.091123284)
    (90,1.082732487)
   (100,1.101758555)
    };
   \addlegendentry{BCLS}

\addplot
    coordinates {
    (11,1.000506699)
    (20,1.000513129)
    (30,1.007613061)
    (40,1.015006856)
    (50,1.033640512)
    (60,1.043104037)	    
    (70,1.05699349 )
    (80,1.057492078)
    (90,1.071402302)
   (100,1.070163458)
    };
   \addlegendentry{MHC}

\end{axis}
\end{tikzpicture} }
\caption{Worst performing algorithms in terms of weight for $\delta =4$.}
\label{plot:4weightworst}
\end{figure}

MHC was the winner of Figure~\ref{plot:4weightworst}, with FWLS and BCLS having similar performances to each other. 

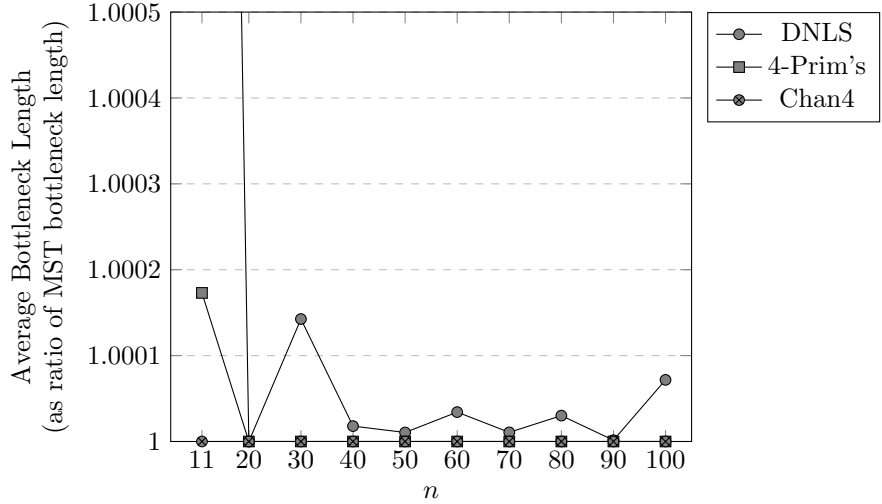
\begin{figure}[H]
\centering
 \scalebox{1}{
\begin{tikzpicture}
\begin{axis}[
    xlabel={$n$ },
    ylabel style={align=center},
    ylabel={Average Bottleneck Length \\ (as ratio of MST bottleneck length)},
    xmin=5, xmax=105,
    ymin = 1.0, ymax = 1.0005,
    xtick={11,20,30,40,50,60,70,80,90,100},
    ytick={1,1.0001,1.0002,1.0003,1.0004,1.0005},
    yticklabels={1,1.0001,1.0002,1.0003,1.0004,1.0005},
    legend pos= outer north east,
    ymajorgrids=true,
    grid style=dashed,
    cycle list name=black white,
]

\addplot
    coordinates {
    (11,1.003183683)
    (20,1          )
    (30,1.000142577)
    (40,1.000017901)
    (50,1.000010496)
    (60,1.000034212)
    (70,1.000010629)
    (80,1.000030118)
    (90,1.000001515)
   (100,1.000071757)
    };
   \addlegendentry{DNLS} 

\addplot
    coordinates {
    (11,1.000172983)(20,1)(30,1)(40,1)(50,1)(60,1)(70,1)(80,1)(90,1)(100,1)
    };
   \addlegendentry{4-Prim's}

\addplot
    coordinates {
    (11,1)(20,1)(30,1)(40,1)(50,1)(60,1)(70,1)(80,1)(90,1)(100,1)
    };
   \addlegendentry{Chan4}

\end{axis}
\end{tikzpicture} }
\caption{Best performing algorithms in terms of bottleneck for $\delta =4$.}
\label{plot:4bottlebest}
\end{figure}

All algorithms in Figure~\ref{plot:4bottlebest} had performances exceptionally close on average to the bottleneck values of the MSTs, with the Chan4 algorithm achieving the bottleneck value of the MST in all instances (clearly the edges incident to the nodes of degree 5 were not bottleneck edges).

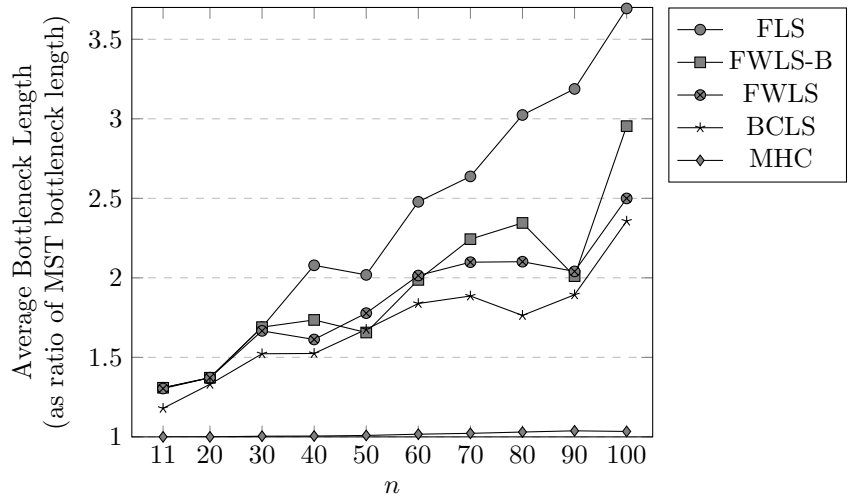
\begin{figure}[H]
\centering
 \scalebox{1}{
\begin{tikzpicture}
\begin{axis}[
    xlabel={$n$ },
    ylabel style={align=center},
    ylabel={Average Bottleneck Length \\ (as ratio of MST bottleneck length)},
    xmin=5, xmax=105,
    ymin = 1.0, ymax = 3.7,
    xtick={11,20,30,40,50,60,70,80,90,100},
    legend pos= outer north east,
    ymajorgrids=true,
    grid style=dashed,
    cycle list name=black white,
]

\addplot
    coordinates {
    (11,1.308856626)
    (20,1.372018776)
    (30,1.689983773)
    (40,2.078766002)
    (50,2.018516301)
    (60,2.478154331)
    (70,2.637596856)
    (80,3.02389912 )
    (90,3.187837414)
   (100,3.693670624)
    };
   \addlegendentry{FLS}

\addplot
    coordinates {
    (11,1.308856626)
    (20,1.372018776)
    (30,1.689983773)
    (40,1.735266396)
    (50,1.656188751)
    (60,1.987119438)
    (70,2.24315108 )
    (80,2.345166817)
    (90,2.010738426)
   (100,2.953804136)
    };
   \addlegendentry{FWLS-B}

\addplot
    coordinates {
    (11,1.304021473)
    (20,1.372018776)
    (30,1.666501516)
    (40,1.612461824)
    (50,1.777908918)
    (60,2.014449192)
    (70,2.098396711)
    (80,2.101378339)
    (90,2.040476624)
   (100,2.499334217)
    };
   \addlegendentry{FWLS}

\addplot
    coordinates {
    (11,1.179592106)
    (20,1.3307983  )
    (30,1.522610247)
    (40,1.524436839)
    (50,1.675901273)
    (60,1.83863058 )
    (70,1.885771842)
    (80,1.763677267)
    (90,1.893507127)
   (100,2.356647768)
    };
   \addlegendentry{BCLS}
   
   \addplot
    coordinates {
    (11,1          )
    (20,1          )
    (30,1.004369657)
    (40,1.005307491)
    (50,1.008989593)
    (60,1.016923957)
    (70,1.022259157)
    (80,1.030474513)
    (90,1.03816478 )
   (100,1.034035118)
    };
   \addlegendentry{MHC}

\end{axis}
\end{tikzpicture} }
\caption{Worst performing algorithms in terms of bottleneck for $\delta =4$.}
\label{plot:4bottleworst}
\end{figure}

From Figure~\ref{plot:4bottleworst}, it can be seen that the MHC algorithm perfomed consistently well, whereas the performances of the other algorithms seemed to worsen as $n$ increased.

\section{Results Discussion}  
We will give a brief summary of results of each algorithm separately.

\subsection{DT}
For the total weight objective, the DT algorithm fairly average in its performance compared to the other algorithms tested for $\delta = 2$. It was however, the second best algorithm in terms of bottleneck, beaten only by the Cube2 approximation algorithm which has performance guarantees for the bottleneck objective. The DT algorithm's success in the bottleneck objective compared to other algorithms could be due to the edge duplicating step not increasing the length of the bottleneck edge. It is only during the short-cutting process in which the bottleneck value can increase.

\subsection{Christofides}
Christofides algorithm performed similarly to FWLS-B for the min-sum objective, although both algorithms had a fairly average performance in comparison to the more successful algorithms. As expected, Christofides did outperform DT, although not by much. In terms of bottleneck objective, Christofides had a similar performance for $n=100$ to 2Prim, MHC and DT, although the performance of Chirstofides seemed to vary a lot more than these other algorithms.

\subsection{Cube2}
For the bottleneck objective when $\delta = 2$, this algorithm clearly outperformed all other algorithms tested and was fairly consistent in performance. However, it was of the worst performing algorithms for total weight, which is not hugely uprising as it was mostly intended as bottleneck algorithm. Because of its performance with the bottleneck objective and its time efficiency (see Table~\ref{tab:2MST_tim2}), this makes it the most suitable for solving the Euclidean bottleneck travelling salesman path problem.

\subsection{KRY and KRY-B}
Both these algorithm seemed to perform better relative to other algorithms when given the special instances rather than uniform instances. This is likely due to the fact that the local search algorithms can perform a small number of iterations when the feasibility error of an instance is low, whereas the recursive local edge swap algorithms must iterate over the entire tree. As would be expected, KRY-B outperformed KRY in bottleneck value (although not by much), whereas KRY outperformed KRY-B for total weight. KRY was outperformed by Chan3 in terms of total weight but the opposite seemed to be the case for the bottleneck length. However, both KRY and KRY-B were clearly outperformed by the DNLS and 3-Prim's algorithms in both criteria.

\subsection{Chan3}
Like KRY and KRY-B, this algorithm performed better on the special instances than uniform instances. It outperformed the KRY algorithms in total weight but not bottleneck, however it was beaten in both criteria by DNLS and 3-Prim's.

\subsection{Chan4}
This algorithm performed exceptionally well in terms of bottleneck value on the special instances for $\delta =4$, achieving the MST's value in all cases. It was also the second best performer for total weight, beaten only by the significantly more computationally expensive DNLS algorithm.

\subsection{FLS}
With the surprising exception of average total weight of uniform instances for $\delta = 3$, this algorithm was the clear worst performer in both criteria out of all algorithms tested. This is to be expected since it is a search algorithm which only takes feasibility error into consideration, not edge lengths of swapped edges. As such, it was meant to be used merely as a benchmark to which the other algorithms are compared.

\subsection{FWLS and FWLS-B}
Even though FWLS-B was intended as the bottleneck version of FWLS, it only outperformed its counterpart in either criteria for the uniform instances when $\delta = 3$ in which it only slightly outperformed FWLS in bottleneck length. FWLS had a similar performance in both criteria to BCLS although was usually outperformed, with the exception being for bottleneck for $\delta = 2$ in which FWLS was still outperformed by four other algorithms. With the exception of its performance with respect total weight for $\delta = 2$, FWLS was of the least effective algorithms for both criteria.

\subsection{Bi-Criteria Local Search}
As mentioned previously, BCLS performed similarly to FWLS in both objective criteria, although BCLS tended to do slightly better than FWLS. For $\delta = 3$ and $4$, BCLS was always outperformed by DNLS and $\delta$-Prim's, however it was the best performing algorithm in terms of total weight for $\delta = 2$.

\subsection{Diminishing Neighbourhood Local Search}
With the exception of the total weight objective for $\delta = 2$, in which it was still competitive, DNLS was clearly the best performing of the local search algorithms. It was the best performing algorithm (for $n = 100$) in terms of total weight for $\delta =3$ (both sets of instances) and $\delta = 4$, and the second best performing algorithm for these degree bounds in terms of bottleneck length. Its main competitor for performance was the $\delta$-Prim's algorithm which it performed similarly to. One factor that sets DNLS apart from $\delta$-Prim's however is running time (see tables \ref{tab:2MST_tim1}, \ref{tab:3MSTU_tim1}, \ref{tab:3MSTS_tim1}, \ref{tab:2MST_tim1}) as DNLS had the longest running time of all algorithms, whereas $\delta$-Prim's was of the best (see tables \ref{tab:2MST_tim2}, \ref{tab:3MSTU_tim2}, \ref{tab:3MSTS_tim2}, \ref{tab:2MST_tim2}). Another distinction is that while both algorithms were overall fairly consistent their performance for the various values of $n$, DNLS tended to be a slightly more variable in performance than $\delta$-Prim's.

\subsection{$\delta$-Prim's}
The $\delta$-Prim's algorithm was one of the better performing algorithms overall for both criteria and the best performing algorithm (for $n = 100$) in terms of bottleneck for $\delta =3$ (both sets of instances) and second best performing for $\delta = 4$. In addition, it was second only to the similarly performing DNLS algorithm in terms of total weight for $\delta = 3$, and third behind Chan4 and DNLS for $\delta = 4$. It was also one of the more time efficient algorithms among those tested (see tables \ref{tab:2MST_tim2}, \ref{tab:3MSTU_tim2}, \ref{tab:3MSTS_tim2}, \ref{tab:2MST_tim2}).

\subsection{MHC}
For $\delta =2$, the MHC algorithm was among the worst performing algorithms for total weight, but was very close to being the second best algorithm for bottleneck length (for $n = 100$). For $\delta = 3$, the algorithm was again outperformed by others in terms of total weight, but was third overall for bottleneck length for $n = 100$ behind the similarly performing DNLS and 3-Prim's algorithms. For $\delta =4$, the algorithm was fourth overall for $n = 100$ in terms of both total weight and bottleneck length behind the 4-Prim's, Chan4, and DNLS algorithms. Overall, the algorithm seemed to perform relatively better for the bottleneck objective compared to the total weight objective and its performance in terms of total weight seemed to worsen as $n$ increased.

\section{Conclusion}
There are many conclusions that one can infer from the results presented in this paper, some of which may motivate further investigation. One such conclusion is that 2-MST problem has a significantly different structure to the 3-MST and 4-MST problems in the Euclidean plane (and similarly for the bottleneck versions). This is evidenced by the fact that the algorithms that were the best performing when $\delta = 3$ or $4$ did not achieve the same success when $\delta =2$. For instance, DNLS and $\delta$-Prim's consistently outperformed the other algorithms in terms of weight for $\delta = 3$ and $4$, but were beaten by both BCLS and FWLS when $n = 100$ for $\delta = 2$. Typically, the BCLS and FWLS local search algorithms were not among the better performing algorithms for $\delta =3 $ and $4$, and yet appear to be of the betters options for accuracy in terms of total weight for $\delta =2$. Also, whereas DNLS and $\delta$-Prim's were the best performing algorithms for $\delta = 3$ and beaten only by the similarly performing Chan4 algorithm for $\delta = 4$, both DNLS and 2-Prim's were outperformed in terms of bottleneck length for $\delta = 2$ by MHC, DT and the Cube2 approximation algorithm, none of which are edge swap algorithms. These results seem to indicate that the algorithmic approach to solving the problem when $\delta = 2$ should use different methodologies then the approach to solving the general Euclidean bounded degree spanning tree problem. Furthermore, since it can be seen that there is no algorithm that came close to outperforming all others in both objective criteria for $\delta =2$, the results would indicate that the approach to solving the bottleneck version of the problem should also be different to that of the conventional total weight version for this degree bound.\\
In terms of the total weight objective for $\delta = 2$, no algorithm in particular seems to stand out among from the others, however the success of BCLS and FWLS suggest that an edge swap local search approach may be an appropriate methodology for solving the problem. However, for the $2$-MBST problem, the Cube2 algorithm quite clearly outperformed all others. Due to it also being quite time efficient (see Table~\ref{tab:2MST_tim2}), the Cube2 algorithm appears to be suitable choice for practical use for the $2$-MBST problem. Since the algorithm is relatively simple, it is possible that a more sophisticated algorithm based upon similar principles could be developed in the future.\\
Another observation that can be made is the difference in performance of the approximation algorithms for $\delta = 3$ (KRY, KRY-B, and Chan3) when applied to the set of uniform instances compared to the set of special instances. Whilst the approximations were among the worst performing in terms of total weight for the uniform instances (even being worse than FLS), they consistently managed to outperform the BCLS and FWLS algorithms for the special instances. This is mostly likely due to the fact that the approximation algorithms recursively apply edge swaps to the entire MST rather than just the initial nodes of high degree. Since the uniform instances have relatively small number of nodes of degree 4, this may be an inefficient strategy compared to the local search algorithms which only perform edge swaps that decrease feasibility error and thus would involve fewer edge swaps overall. Since the special instances have many nodes of degree 4 and 5, these local edge swap algorithms lose their advantage over the approximation algorithms, which is reflected by the results.\\
Another observation is that the $\delta$-Prim's algorithm was one of the better performing algorithms for $\delta = 3$ and $4$, being the best performing algorithm in terms of bottleneck for $\delta = 3$ when $n = 100$. This results seems almost counter-intuitive as the $\delta$-Prim's algorithm is merely a naive modification of Prim's algorithm for the MST and lacks the sophistication of some of the other approaches. This could be a consequence of the Euclidean MST sharing a large number of edges with an optimal $3$-MST/MBST or $4$-MST/MBST and may warrant further investigation.\\
The Chan4 algorithm happened to be the best performing algorithm for the $4$-MBST problem. Whilst this may have some relation to the relatively small approximation ratio of the algorithm, this result was also most likely contributed to by the fact that the special instances for $\delta = 4$ had that all points of degree 5 were not incident to any edges of bottleneck length. As such, the algorithm capitalised on this and was able to produce $4$-MBSTs whose bottleneck values were the same as that of the MST. It is still worth noting that it was the only algorithm amongst those tested that was capable of achieving this for every test instance.\\
Finally, the DNLS algorithm appeared to be a very successful algorithm for $\delta = 3$ and $4$ as it was the best performing algorithm for $n = 100$ in terms of weight and competitive as a $\delta$-MBST algorithm for both these degree bounds. Whilst its time complexity was much greater than that of $\delta$-Prim's and Chan4 and may make it impractical, it does demonstrate a successful proof-of-concept of the diminishing neighbourhood approach and the idea of generalising the recursive edge swap algorithms through similar means. It would certainly be worth investigating in future in order to find a modification of DNLS or another realisation of its polyhedral description that is more time efficient, as the current results look promising.\\
\bibliographystyle{apalike}
\bibliography{ref}

\begin{appendices}

\section{Results Tables for $\delta = 2$}\label{app:delta2}

\begin{table}[H]
\centering
\caption{Average Total Weight: $\delta = 2$}
\label{tab:2MST_sum1}
\begin{tabular}{@{}llllll@{}}
\toprule
$n$ & BCLS     & FLS       & FWLS     & FWLS-B   & DNLS     \\ \midrule
10  & 24436.14 & 26790.42  & 24929.37 & 25949.92 & 23417.45 \\
20  & 37033.01 & 44319.21  & 37408.68 & 39249.56 & 36108.54 \\
30  & 45729.68 & 62351.99  & 46335.58 & 47483.09 & 44192.98 \\
40  & 51361.02 & 73800.72  & 52295.04 & 54035.36 & 51880.75 \\
50  & 59689.96 & 88917.51  & 59535.03 & 63242.02 & 58376.23 \\
60  & 65350.23 & 104693.43 & 65975.54 & 68164.47 & 65693.20 \\
70  & 68861.58 & 114060.09 & 69051.98 & 73277.70 & 68276.69 \\
80  & 75548.85 & 130872.63 & 76203.19 & 80183.17 & 75400.67 \\
90  & 79064.64 & 139950.00 & 79268.85 & 84316.37 & 80527.23 \\
100 & 84149.09 & 152009.40 & 84320.85 & 88952.35 & 84649.89 \\ \bottomrule
\end{tabular}
\end{table}

\begin{table}[H]
\centering
\caption{Average Total Weight: $\delta = 2$}
\label{tab:2MST_sum2}
\begin{tabular}{@{}lllllll@{}}
\toprule
$n$ & 2Prim    & MHC       & DT    & Christofides    & Cube2     & MST      \\ \midrule
10  & 22899.99 & 22005.13  & 24384.58 & 25775.19 & 33007.36  & 20758.99 \\
20  & 35841.01 & 35515.40  & 40359.06 & 41683.00 & 52007.06  & 31280.11 \\
30  & 45178.80 & 47960.45  & 49637.63 & 49013.65 & 62878.63  & 37149.45 \\
40  & 52677.29 & 58660.58  & 56539.27 & 57058.29 & 71673.03  & 42431.98 \\
50  & 60511.64 & 71956.39  & 66408.32 & 63462.61 & 81553.66  & 48470.71 \\
60  & 66201.49 & 82878.24  & 73849.04 & 68919.92 & 89947.48  & 52989.35 \\
70  & 70862.48 & 92089.03  & 78853.47 & 75045.97 & 95024.31  & 55929.58 \\
80  & 78167.97 & 103650.81 & 87170.08 & 78857.75 & 103489.62 & 60923.35 \\
90  & 82387.43 & 110884.87 & 90396.64 & 84440.60 & 109829.20 & 64223.84 \\
100 & 85858.88 & 120537.33 & 96651.10 & 88526.94 & 115281.50 & 67721.19 \\ \bottomrule
\end{tabular}
\end{table}

\begin{table}[H]
\centering
\caption{Average Bottleneck: $\delta = 2$}
\label{tab:2MST_bot1}
\begin{tabular}{@{}llllll@{}}
\toprule
$n$ & BCLS    & FLS      & FWLS    & FWLS-B  & DNLS    \\ \midrule
10  & 6313.72 & 6932.33  & 6296.99 & 6424.16 & 5522.44 \\
20  & 5768.36 & 8170.30  & 5791.13 & 6434.38 & 4901.67 \\
30  & 7067.59 & 9705.99  & 7205.96 & 6502.52 & 5326.91 \\
40  & 6034.60 & 9271.62  & 6412.21 & 6458.69 & 5878.87 \\
50  & 6840.79 & 9850.23  & 6481.29 & 7311.12 & 4681.73 \\
60  & 6964.32 & 9694.59  & 7354.63 & 6242.29 & 5906.97 \\
70  & 6530.97 & 10328.01 & 6554.73 & 6707.74 & 4876.63 \\
80  & 6503.95 & 10530.36 & 6797.12 & 6937.40 & 5485.94 \\
90  & 6400.43 & 10678.68 & 6506.56 & 6685.90 & 6440.13 \\
100 & 6678.62 & 10180.19 & 6626.39 & 7212.88 & 5890.03 \\ \bottomrule
\end{tabular}
\end{table}

\begin{table}[H]
\centering
\caption{Average Bottleneck: $\delta = 2$}
\label{tab:2MST_bot2}
\begin{tabular}{@{}lllllll@{}}
\toprule
$n$ & 2Prim   & MHC     & DT   & Christofides   & Cube2   & MST     \\ \midrule
10  & 5477.91 & 4937.49 & 6081.60 & 6495.97 & 6760.89 & 4415.40 \\
20  & 5171.00 & 4232.63 & 6261.62 & 7496.07 & 5376.43 & 3335.64 \\
30  & 5549.48 & 4391.61 & 6248.40 & 6398.51 & 4643.77 & 2699.20 \\
40  & 5677.86 & 4146.16 & 5548.35 & 6987.72 & 4023.43 & 2397.57 \\
50  & 5456.46 & 4882.08 & 5782.52 & 5992.70 & 3629.19 & 2212.94 \\
60  & 5407.46 & 5073.24 & 5800.91 & 5436.32 & 3629.61 & 2043.17 \\
70  & 5634.05 & 5219.76 & 5519.14 & 6582.66 & 3212.39 & 1917.25 \\
80  & 5466.14 & 5265.07 & 5673.11 & 5327.71 & 3061.51 & 1738.30 \\
90  & 5513.12 & 5107.40 & 5227.09 & 5850.77 & 2935.15 & 1762.46 \\
100 & 5370.05 & 5203.57 & 5188.58 & 5312.55 & 2697.41 & 1619.13 \\ \bottomrule
\end{tabular}
\end{table}

\begin{table}[H]
\centering
\caption{Average Time: $\delta = 2$}
\label{tab:2MST_tim1}
\begin{tabular}{@{}llllll@{}}
\toprule
$n$ & BCLS   & FLS    & FWLS   & FWLS-B & DNLS    \\ \midrule
10  & 0.000  & 0.000  & 0.000  & 0.000  & 0.003   \\
20  & 0.012  & 0.001  & 0.004  & 0.014  & 0.242   \\
30  & 0.088  & 0.027  & 0.042  & 0.090  & 0.846   \\
40  & 0.405  & 0.123  & 0.182  & 0.212  & 3.388   \\
50  & 1.182  & 0.382  & 0.558  & 0.489  & 10.775  \\
60  & 3.801  & 1.088  & 1.556  & 1.326  & 28.983  \\
70  & 6.364  & 2.071  & 2.970  & 2.615  & 61.254  \\
80  & 13.367 & 3.942  & 5.931  & 5.347  & 117.443 \\
90  & 22.320 & 6.646  & 9.915  & 9.403  & 212.995 \\
100 & 37.542 & 11.175 & 16.611 & 15.451 & 359.226 \\ \bottomrule
\end{tabular}
\end{table}

\begin{table}[H]
\centering
\caption{Average Time: $\delta = 2$}
\label{tab:2MST_tim2}
\begin{tabular}{@{}lllllll@{}}
\toprule
$n$ & 2Prim & MHC    & DT & Christofides & Cube2 & MST   \\ \midrule
10  & 0.000 & 1.041  & 0.000 & 0.000 & 0.000 & 0.000 \\
20  & 0.000 & 2.897  & 0.000 & 0.000 & 0.000 & 0.000 \\
30  & 0.000 & 5.257  & 0.000 & 0.000 & 0.000 & 0.000 \\
40  & 0.000 & 8.902  & 0.000 & 0.000 & 0.000 & 0.000 \\
50  & 0.000 & 12.713 & 0.000 & 0.000 & 0.000 & 0.000 \\
60  & 0.000 & 10.359 & 0.000 & 0.000 & 0.000 & 0.000 \\
70  & 0.000 & 13.862 & 0.000 & 0.000 & 0.000 & 0.000 \\
80  & 0.000 & 17.436 & 0.000 & 0.000 & 0.000 & 0.000 \\
90  & 0.000 & 21.555 & 0.000 & 0.000 & 0.000 & 0.000 \\
100 & 0.000 & 25.902 & 0.000 & 0.000 & 0.000 & 0.000 \\ \bottomrule
\end{tabular}
\end{table}

\section{Results Tables for $\delta = 3$ with Uniform Instances}\label{app:delta3U}

\begin{table}[H]
\centering
\caption{Average Total Weight: $\delta = 3$ (Uniform Instances)}
\label{tab:3MSTU_sum1}
\begin{tabular}{@{}llllll@{}}
\toprule
$n$ & BCLS      & FLS      & FWLS      & FWLS-B 	& DNLS        \\ \midrule
10  & 25682.30  & 25682.30 & 25682.30  & 25682.30 & 22916.40     \\
20  & 36764.13 	& 36883.04 & 36705.84  & 36883.04 & 32154.42 \\
30  & 42925.75  & 43154.84 & 43060.04  & 42763.18 & 38010.85    \\
40  & 48101.59  & 48290.78 & 48290.78  & 48290.78 & 44176.54   \\
50  & 53069.75 	& 54560.06 & 53180.35  & 53633.61 & 49131.62 \\
60  & 57465.75 	& 58624.12 & 58242.22  & 58412.87 & 53578.16 \\
70  & 61318.47  & 62768.17 & 61905.92  & 62029.31	& 57154.67  \\
80  & 65311.42  & 67649.79 & 65929.11  & 66059.01 & 61139.75   \\
90  & 68296.37 	& 70011.14 & 68766.19  & 69259.45	& 64548.94 \\
100 & 72081.08 	& 73947.71 & 72722.12  & 73241.11 & 67541.56 \\ \bottomrule
\end{tabular}
\end{table}

\begin{table}[H]
\centering
\caption{Average Total Weight: $\delta = 3$ (Uniform Instances)}
\label{tab:3MSTU_sum2}
\begin{tabular}{@{}lllllll@{}}
\toprule
$n$ & 3Prim       & MHC         & KRY         & KRY-B       & Chan3       & MST         \\ \midrule
10  & 23406.25    & 22829.55    & 25485.40    & 25678.40    & 25152.25    & 22654.20     \\
20  & 31875.92 	  & 32010.36 	& 35491.36    & 35715.53 	& 34954.08 	  & 31729.52 \\
30  & 37477.55    & 38901.44 	& 42294.34    & 42618.85    & 41984.82 	  & 37274.45    \\
40  & 43880.83    & 47081.31   	& 49511.53    & 49849.78   	& 50898.06    & 43734.01    \\
50  & 48678.84    & 53762.36 	& 54871.67    & 55147.91 	& 54346.02    & 48489.95 \\
60  & 53492.13 	  & 60188.51  	& 60403.32    & 60589.79 	& 60633.20 	  & 53392.21 \\
70  & 57017.12    & 65020.34   	& 64618.32    & 64907.08    & 64107.30    & 56872.27    \\
80  & 60832.07    & 70220.48   	& 69499.21    & 69709.75   	& 68643.59    & 60709.59   \\
90  & 64436.57    & 75089.09 	& 73778.04    & 74002.11 	& 74597.86 	  & 64325.80 \\
100 & 67591.66    & 79139.97 	& 77105.00    & 77297.67  	& 75932.13    & 67466.25 \\ \bottomrule
\end{tabular}
\end{table}


\begin{table}[H]
\centering
\caption{Average Bottleneck: $\delta = 3$ (Uniform Instances)}
\label{tab:3MSTU_bot1}
\begin{tabular}{@{}llllll@{}}
\toprule
$n$ & BCLS      & FLS       & FWLS      & FWLS-B 	& DNLS        \\ \midrule
10  & 5625.90   & 5625.90   & 5625.90   & 5625.90   & 4248.89     \\
20  & 6446.53 	& 6485.10   & 6386.30 	& 6485.10   & 3374.81 \\
30  & 6589.03 	& 6687.39 	& 6687.39 	& 6288.41 	& 3252.14 \\
40  & 5303.87   & 5383.15   & 5383.15   & 5383.15 	& 2484.68    \\
50  & 5511.58 	& 5872.09 	& 5558.89   & 5719.66 	& 2619.31 \\
60  & 4932.31 	& 5732.28 	& 5618.82 	& 5732.28 	& 2072.94 \\
70  & 5157.35   & 5914.52  	& 5677.72  	& 5669.40	& 2058.48   \\
80  & 5355.30   & 6184.37   & 5824.56   & 5776.99	& 2073.49   \\
90  & 4594.43 	& 5304.18 	& 5045.34 	& 5244.13 	& 1916.87 \\
100 & 5239.38  	& 6103.43 	& 5805.91  	& 5669.85 	& 1642.86 \\ \bottomrule
\end{tabular}
\end{table}

\begin{table}[H]
\centering
\caption{Average Bottleneck: $\delta = 3$ (Uniform Instances)}
\label{tab:3MSTU_bot2}
\begin{tabular}{@{}lllllll@{}}
\toprule
$n$ & 3Prim     & MHC       & KRY       & KRY-B     & Chan3     & MST         \\ \midrule
10  & 4248.89   & 4248.89   & 5037.21   & 5003.99   & 4930.90   & 4113.68     \\
20  & 3258.73 	& 3274.75 	& 3795.03 	& 3795.03 	& 3757.70 	& 3205.22 \\
30  & 2772.41 	& 2905.20 	& 3341.83 	& 3341.83 	& 3417.23 	& 2772.41 \\
40  & 2328.50   & 2596.53   & 2911.13   & 2893.56   & 3250.26   & 2313.72    \\
50  & 2268.06 	& 2522.68 	& 2616.91 	& 2607.16 	& 2687.46 	& 2257.12    \\
60  & 2042.27 	& 2377.95  	& 2466.83 	& 2466.83 	& 2600.42 	& 2042.27 \\
70  & 1933.13   & 2221.81  	& 2348.53  	& 2348.53  	& 2437.15  	& 1926.63   \\
80  & 1814.28   & 2070.19   & 2270.66   & 2268.80   & 2215.27   & 1807.66    \\
90  & 1813.84 	& 2034.72 	& 2180.44 	& 2180.44 	& 2428.33  	& 1808.27 \\
100 & 1641.07 	& 1953.77 	& 2057.38 	& 2057.38 	& 2074.78 	& 1640.00 \\ \bottomrule
\end{tabular}
\end{table}


\begin{table}[H]
\centering
\caption{Average Time: $\delta = 3$ (Uniform Instances)}
\label{tab:3MSTU_tim1}
\begin{tabular}{@{}llllll@{}}
\toprule
$n$ & BCLS      & FLS     	& FWLS      & FWLS-B    & DNLS        \\ \midrule
10  & 0.000     & 0.000     & 0.000     & 0.000     & 0.000           \\
20  & 0.000     & 0.000     & 0.000     & 0.000     & 0.004 \\
30  & 0.001 	& 0.000     & 0.000     & 0.000     & 0.168 \\
40  & 0.001     & 0.000     & 0.000     & 0.000     & 0.298       \\
50  & 0.018 	& 0.008   	& 0.009 	& 0.008     & 1.337 \\
60  & 0.008   	& 0.003 	& 0.003   	& 0.003   	& 3.441    \\
70  & 0.024     & 0.010 	& 0.011     & 0.010     & 6.763     \\
80  & 0.055     & 0.026  	& 0.030     & 0.026     & 9.045      \\
90  & 0.094 	& 0.045 	& 0.044 	& 0.044  	& 20.913 \\
100 & 0.083 	& 0.039 	& 0.040     & 0.984 	& 35.816 \\ \bottomrule
\end{tabular}
\end{table}

\begin{table}[H]
\centering
\caption{Average Time: $\delta = 3$ (Uniform Instances)}
\label{tab:3MSTU_tim2}
\begin{tabular}{@{}lllllll@{}}
\toprule
$n$ & 3Prim & MHC    & KRY 	   & KRY-B 	   & Chan3 	   & MST \\ \midrule
10  & 0.000 & 1.310  & 0.000   & 0.000     & 0.000     & 0.000   \\
20  & 0.000 & 5.518  & 0.000   & 0.000     & 0.000     & 0.000   \\
30  & 0.000 & 12.878 & 0.000   & 0.000     & 0.000     & 0.000   \\
40  & 0.000 & 24.723 & 0.000   & 0.000     & 0.000     & 0.000   \\
50  & 0.000 & 41.210 & 0.000   & 0.000     & 0.000     & 0.000   \\
60  & 0.000 & 52.502 & 0.000   & 0.000     & 0.000     & 0.000   \\
70  & 0.007 & 72.366 & 0.000   & 0.000     & 0.000     & 0.000   \\
80  & 0.010 & 89.660 & 0.000   & 0.000     & 0.000     & 0.000   \\
90  & 0.010 & 68.229 & 0.000   & 0.000     & 0.000     & 0.000   \\
100 & 0.020 & 83.154 & 0.000   & 0.000     & 0.000     & 0.000   \\ \bottomrule
\end{tabular}
\end{table}

\section{Results Tables for $\delta = 3$ with Special Instances}\label{app:delta3S}

\begin{table}[H]
\centering
\caption{Average Total Weight: $\delta = 3$ (Special Instances)}
\label{tab:3MSTS_sum1}
\begin{tabular}{@{}llllll@{}}
\toprule
$n$ & BCLS     & FLS       & FWLS     & FWLS-B   & DNLS     \\ \midrule
11  & 11510.31 & 17860.40  & 11781.81 & 13829.91 & 8972.46  \\
20  & 23584.23 & 35972.27  & 24011.18 & 29849.14 & 20888.58 \\
30  & 32955.17 & 49833.01  & 33331.48 & 39477.20 & 29319.45 \\
40  & 34228.30 & 65160.06  & 34448.93 & 46194.54 & 30137.58 \\
50  & 39969.19 & 75696.69  & 40428.30 & 51262.56 & 36052.87 \\
60  & 42269.20 & 96610.66  & 42702.05 & 57860.50 & 38282.85 \\
70  & 46791.84 & 108093.78 & 47249.57 & 62514.30 & 42917.58 \\
80  & 47844.30 & 117776.01 & 48316.64 & 68296.15 & 44698.47 \\
90  & 53244.97 & 132551.63 & 54024.13 & 72494.67 & 48923.77 \\
100 & 54174.67 & 146349.57 & 54146.47 & 75934.45 & 49379.68 \\ \bottomrule
\end{tabular}
\end{table}

\begin{table}[H]
\centering
\caption{Average Total Weight: $\delta = 3$ (Special Instances)}
\label{tab:3MSTS_sum2}
\begin{tabular}{@{}lllllll@{}}
\toprule
$n$ & 3Prim    & MHC      & KRY      & KRY-B    & Chan3    & MST      \\ \midrule
11  & 8679.31  & 8708.96  & 8988.29  & 9153.63  & 8988.29  & 8588.93  \\
20  & 20750.36 & 20705.76 & 22369.10 & 22725.05 & 22363.75 & 20631.59 \\
30  & 29245.25 & 29453.70 & 31612.39 & 31965.53 & 31583.69 & 29033.00 \\
40  & 30192.49 & 30812.91 & 32697.70 & 33074.88 & 32656.64 & 30040.05 \\
50  & 36105.30 & 37681.84 & 38778.24 & 39202.98 & 38807.61 & 35929.10 \\
60  & 38211.63 & 40230.86 & 41352.82 & 41953.43 & 41132.43 & 38001.00 \\
70  & 42919.69 & 46408.62 & 46602.36 & 47249.00 & 46508.06 & 42733.89 \\
80  & 44538.42 & 48104.88 & 48311.90 & 48857.11 & 48108.64 & 44315.79 \\
90  & 49087.91 & 53774.59 & 53178.58 & 53838.66 & 52885.92 & 48803.36 \\
100 & 49534.09 & 55379.59 & 53986.24 & 55132.13 & 53869.64 & 49247.62 \\ \bottomrule
\end{tabular}
\end{table}


\begin{table}[H]
\centering
\caption{Average Bottleneck: $\delta = 3$ (Special Instances)}
\label{tab:3MSTS_bot1}
\begin{tabular}{@{}llllll@{}}
\toprule
$n$ & BCLS    & FLS     & FWLS    & FWLS-B  & DNLS    \\ \midrule
11  & 4428.74 & 5434.09 & 4738.48 & 5209.04 & 3908.82 \\
20  & 5238.53 & 6760.84 & 5444.13 & 6250.26 & 4310.00 \\
30  & 5214.08 & 7221.60 & 5494.63 & 6076.14 & 3504.66 \\
40  & 5062.99 & 7617.41 & 5227.41 & 6350.87 & 3445.02 \\
50  & 4877.66 & 8014.15 & 5240.71 & 5314.81 & 3112.57 \\
60  & 5163.50 & 8962.21 & 5569.41 & 5949.04 & 3049.73 \\
70  & 4712.58 & 8929.46 & 5136.77 & 6080.47 & 2634.34 \\
80  & 4336.74 & 8356.78 & 4794.80 & 5927.70 & 2693.28 \\
90  & 4990.15 & 9023.48 & 5765.57 & 6146.28 & 2420.19 \\
100 & 4978.01 & 9152.09 & 5309.02 & 6746.44 & 2254.82 \\ \bottomrule
\end{tabular}
\end{table}

\begin{table}[H]
\centering
\caption{Average Bottleneck: $\delta = 3$ (Special Instances)}
\label{tab:3MSTS_bot2}
\begin{tabular}{@{}lllllll@{}}
\toprule
$n$ & 3Prim   & MHC     & KRY     & KRY-B   & Chan3   & MST     \\ \midrule
11  & 3897.00 & 3923.95 & 3897.00 & 3897.00 & 3897.00 & 3896.33 \\
20  & 4310.00 & 4310.00 & 4649.04 & 4649.04 & 4650.01 & 4310.00 \\
30  & 3505.25 & 3515.05 & 4004.21 & 4004.21 & 4013.85 & 3504.55 \\
40  & 3444.94 & 3469.05 & 3760.02 & 3760.02 & 3727.68 & 3444.94 \\
50  & 3112.30 & 3176.78 & 3392.26 & 3392.26 & 3414.17 & 3112.30 \\
60  & 3049.58 & 3122.99 & 3343.18 & 3343.18 & 3314.07 & 3049.58 \\
70  & 2641.88 & 2822.75 & 3108.04 & 3108.04 & 3133.04 & 2634.27 \\
80  & 2578.76 & 2694.35 & 2867.22 & 2867.22 & 2904.84 & 2578.76 \\
90  & 2421.45 & 2534.66 & 2757.33 & 2757.33 & 2712.04 & 2420.19 \\
100 & 2252.99 & 2403.81 & 2500.99 & 2469.12 & 2557.82 & 2252.99 \\ \bottomrule
\end{tabular}
\end{table}


\begin{table}[H]
\centering
\caption{Average Time: $\delta = 3$ (Special Instances)}
\label{tab:3MSTS_tim1}
\begin{tabular}{@{}llllll@{}}
\toprule
$n$ & BCLS   & FLS   & FWLS  & FWLS-B & DNLS    \\ \midrule
11  & 0.000  & 0.000 & 0.000 & 0.000  & 0.001   \\
20  & 0.015  & 0.000 & 0.000 & 0.000  & 0.066   \\
30  & 0.051  & 0.023 & 0.032 & 0.018  & 0.308   \\
40  & 0.299  & 0.115 & 0.126 & 0.090  & 1.532   \\
50  & 0.724  & 0.306 & 0.359 & 0.255  & 4.841   \\
60  & 1.776  & 0.597 & 0.814 & 0.563  & 11.840  \\
70  & 3.232  & 1.189 & 1.596 & 1.129  & 24.627  \\
80  & 7.302  & 2.472 & 3.652 & 2.425  & 50.050  \\
90  & 10.835 & 4.066 & 5.345 & 4.111  & 91.916  \\
100 & 19.177 & 6.625 & 9.396 & 6.629  & 157.411 \\ \bottomrule
\end{tabular}
\end{table}

\begin{table}[H]
\centering
\caption{Average Time: $\delta = 3$ (Special Instances)}
\label{tab:3MSTS_tim2}
\begin{tabular}{@{}lllllll@{}}
\toprule
$n$ & 3Prim & MHC    & KRY   & KRY-B & Chan3 & MST   \\ \midrule
11  & 0.000 & 0.668  & 0.000 & 0.000 & 0.000 & 0.000 \\
20  & 0.000 & 2.211  & 0.000 & 0.000 & 0.000 & 0.000 \\
30  & 0.000 & 5.172  & 0.000 & 0.000 & 0.000 & 0.000 \\
40  & 0.000 & 9.974  & 0.000 & 0.000 & 0.000 & 0.000 \\
50  & 0.001 & 16.788 & 0.000 & 0.000 & 0.000 & 0.000 \\
60  & 0.002 & 25.701 & 0.000 & 0.000 & 0.000 & 0.000 \\
70  & 0.005 & 45.562 & 0.000 & 0.000 & 0.000 & 0.000 \\
80  & 0.012 & 64.226 & 0.000 & 0.000 & 0.000 & 0.000 \\
90  & 0.016 & 67.948 & 0.000 & 0.000 & 0.000 & 0.000 \\
100 & 0.027 & 88.843 & 0.000 & 0.000 & 0.000 & 0.000 \\ \bottomrule
\end{tabular}
\end{table}

\section{Results Tables for $\delta = 4$}\label{app:delta4}

\begin{table}[H]
\centering
\caption{Average Total Weight: $\delta = 4$}
\label{tab:4MST_sum1}
\begin{tabular}{@{}llllll@{}}
\toprule
$n$ & BCLS     & FLS      & FWLS     & FWLS-B   & DNLS     \\ \midrule
11  & 12619.69 & 13189.22 & 13067.56 & 13189.22 & 8787.21  \\
20  & 25391.85 & 25765.28 & 25765.28 & 25765.28 & 20659.18 \\
30  & 33721.79 & 34565.89 & 34325.23 & 34489.13 & 29047.30 \\
40  & 34789.61 & 41059.53 & 35116.89 & 37411.40 & 30050.51 \\
50  & 40572.80 & 46188.30 & 40947.49 & 42410.97 & 35937.90 \\
60  & 43148.75 & 54768.67 & 43764.76 & 46862.51 & 38010.43 \\
70  & 47216.39 & 58576.44 & 47839.22 & 51273.96 & 42742.83 \\
80  & 48353.99 & 65576.80 & 49399.79 & 54940.46 & 44324.53 \\
90  & 52840.98 & 69382.32 & 53293.05 & 58280.61 & 48811.68 \\
100 & 54258.99 & 77307.82 & 54606.77 & 61095.96 & 49258.77 \\ \bottomrule
\end{tabular}
\end{table}

\begin{table}[H]
\centering
\caption{Average Total Weight: $\delta = 4$}
\label{tab:4MST_sum2}
\begin{tabular}{@{}lllll@{}}
\toprule
$n$ & 4Prim    & MHC      & Chan4    & MST      \\ \midrule
11  & 8617.45  & 8593.28  & 8603.74  & 8588.93  \\
20  & 20666.98 & 20642.18 & 20649.14 & 20631.59 \\
30  & 29065.72 & 29254.03 & 29050.52 & 29033.00 \\
40  & 30074.96 & 30490.85 & 30059.04 & 30040.05 \\
50  & 35970.49 & 37137.77 & 35950.42 & 35929.10 \\
60  & 38031.81 & 39639.00 & 38022.48 & 38001.00 \\
70  & 42766.86 & 45169.44 & 42765.87 & 42733.89 \\
80  & 44362.79 & 46863.59 & 44344.67 & 44315.79 \\
90  & 48842.45 & 52288.03 & 48825.86 & 48803.36 \\
100 & 49299.02 & 52703.00 & 49273.87 & 49247.62 \\ \bottomrule
\end{tabular}
\end{table}

\begin{table}[H]
\centering
\caption{Average Bottleneck: $\delta = 4$}
\label{tab:4MST_bot1}
\begin{tabular}{@{}llllll@{}}
\toprule
$n$ & BCLS    & FLS     & FWLS    & FWLS-B  & DNLS    \\ \midrule
11  & 4596.08 & 5099.73 & 5080.89 & 5099.73 & 3908.73 \\
20  & 5735.74 & 5913.40 & 5913.40 & 5913.40 & 4310.00 \\
30  & 5336.06 & 5922.63 & 5840.33 & 5922.63 & 3505.05 \\
40  & 5251.60 & 7161.23 & 5554.84 & 5977.89 & 3445.00 \\
50  & 5215.91 & 6282.23 & 5533.39 & 5154.56 & 3112.34 \\
60  & 5607.06 & 7557.34 & 6143.23 & 6059.88 & 3049.69 \\
70  & 4967.64 & 6948.15 & 5527.75 & 5909.07 & 2634.30 \\
80  & 4548.09 & 7797.90 & 5418.94 & 6047.61 & 2578.83 \\
90  & 4582.64 & 7715.16 & 4938.34 & 4866.36 & 2420.19 \\
100 & 5309.50 & 8321.79 & 5630.97 & 6654.88 & 2253.15 \\ \bottomrule
\end{tabular}
\end{table}

\begin{table}[H]
\centering
\caption{Average Bottleneck: $\delta = 4$}
\label{tab:4MST_bot2}
\begin{tabular}{@{}lllll@{}}
\toprule
$n$ & 4Prim   & MHC     & Chan4   & MST     \\ \midrule
11  & 3897.00 & 3896.33 & 3896.33 & 3896.33 \\
20  & 4310.00 & 4310.00 & 4310.00 & 4310.00 \\
30  & 3504.55 & 3519.86 & 3504.55 & 3504.55 \\
40  & 3444.94 & 3463.23 & 3444.94 & 3444.94 \\
50  & 3112.30 & 3140.28 & 3112.30 & 3112.30 \\
60  & 3049.58 & 3101.19 & 3049.58 & 3049.58 \\
70  & 2634.27 & 2692.91 & 2634.27 & 2634.27 \\
80  & 2578.76 & 2657.34 & 2578.76 & 2578.76 \\
90  & 2420.19 & 2512.55 & 2420.19 & 2420.19 \\
100 & 2252.99 & 2329.67 & 2252.99 & 2252.99 \\ \bottomrule
\end{tabular}
\end{table}

\begin{table}[H]
\centering
\caption{Average Time: $\delta = 4$}
\label{tab:4MST_tim1}
\begin{tabular}{@{}llllll@{}}
\toprule
$n$ & BCLS  & FLS   & FWLS  & FWLS-B & DNLS   \\ \midrule
11  & 0.000 & 0.000 & 0.000 & 0.000  & 0.000  \\
20  & 0.000 & 0.000 & 0.000 & 0.000  & 0.009  \\
30  & 0.000 & 0.000 & 0.000 & 0.000  & 0.151  \\
40  & 0.014 & 0.006 & 0.007 & 0.007  & 0.736  \\
50  & 0.040 & 0.017 & 0.016 & 0.017  & 2.201  \\
60  & 0.124 & 0.056 & 0.056 & 0.055  & 5.881  \\
70  & 0.200 & 0.093 & 0.095 & 0.091  & 12.639 \\
80  & 0.599 & 0.312 & 0.295 & 0.273  & 23.798 \\
90  & 0.800 & 0.401 & 0.393 & 0.385  & 43.311 \\
100 & 1.317 & 0.614 & 0.646 & 0.589  & 81.804 \\ \bottomrule
\end{tabular}
\end{table}

\begin{table}[H]
\centering
\caption{Average Time: $\delta = 4$}
\label{tab:4MST_tim2}
\begin{tabular}{@{}lllll@{}}
\toprule
$n$ & 4Prim & MHC     & Chan4 & MST   \\ \midrule
11  & 0.000 & 1.052   & 0.000 & 0.000 \\
20  & 0.000 & 3.717   & 0.000 & 0.000 \\
30  & 0.000 & 8.990   & 0.000 & 0.000 \\
40  & 0.000 & 17.613  & 0.000 & 0.000 \\
50  & 0.000 & 30.032  & 0.000 & 0.000 \\
60  & 0.000 & 41.364  & 0.000 & 0.000 \\
70  & 0.008 & 59.632  & 0.000 & 0.000 \\
80  & 0.010 & 68.302  & 0.000 & 0.000 \\
90  & 0.010 & 84.047  & 0.000 & 0.000 \\
100 & 0.020 & 106.873 & 0.000 & 0.000 \\ \bottomrule
\end{tabular}
\end{table}

\end{appendices}

\end{document}